\documentclass[11pt]{article}
\usepackage[latin1]{inputenc}
\usepackage{amsmath,amsthm,amssymb}
\usepackage{amsfonts}
\usepackage{amsmath,amsthm,amssymb,amscd}
\usepackage{latexsym}
\usepackage{color}
\usepackage{graphicx}
\usepackage{mathrsfs}

\makeatletter \@addtoreset{equation}{section} \makeatother

\newtheorem{theorem}{Theorem}[section]

\newtheorem{proposition}{Proposition}[section]
\newtheorem{lemma}{Lemma}[section]

\newtheorem{corollary}[theorem]{Corollary}

\begin{document}

\title{\bf Concentration Phenomena of Normalized Solutions \\ of Critical Biharmonic Equations \\ with Combined Nonlinearities
in $\mathbb{R}^{N}$}
\author{{\bf Yueqiang Song$^{a}$, Jiaying Ma$^{a}$,  Du\v{s}an D. Repov\v{s}$^{b,c,d}$
            \thanks{{{{\it E-mail address:} songyq16@mails.jlu.edu.cn (Y. Song), 15674240256@163.com (J. Ma), dusan.repovs@guest.arnes.si (D.D. Repov\v{s}).}}}
            \thanks{Corresponding author: Du\v{s}an D. Repov\v{s}}}\\
            $^{\small\mbox{a}}${\small  College of Mathematics, Changchun Normal
            University,   Changchun, 130032,  P.R. China}\\[-0.2cm]
        $^{\small\mbox{b}}${\small  Faculty of Education, University of Ljubljana, Ljubljana, 1000, Slovenia }\\[-0.2cm]
        $^{\small\mbox{c}}${\small  Faculty of Mathematics and Physics, University of Ljubljana, Ljubljana, 1000, Slovenia}\\[-0.2cm]
        $^{\small\mbox{d}}${\small  Institute of Mathematics, Physics and Mechanics, University of Ljubljana, Ljubljana, 1000, Slovenia}}
\date{}
\maketitle

\begin{abstract}
We  prove the multiplicity and concentration of normalized solutions of
critical biharmonic equations with combined nonlinearities in $\mathbb{R}^{N}$
\begin{equation*}
\Delta^{2}u+V(\varepsilon x)u=\lambda u+\mu |u|^{q-2}u+|u|^{2^{**}-2}u
\mbox{ in }\ \mathbb{R}^{N}, \quad    \int_{\mathbb{R}^{N}}|u|^{2}dx=c^{2},
\end{equation*}
where $\Delta^{2}$ is the biharmonic operator, $N\geq5$, $\mu,c>0$, $\varepsilon>0,$
$\lambda\in\mathbb{R}$, $q\in(2,2+\frac{8}{N}),$ and $2^{**}=\frac{2N}{N-4}$ is the Sobolev
critical exponent. The potential $V$ is a bounded and continuous nonnegative
function, satisfying some suitable global conditions. Using minimization
techniques and a truncation argument, we show that the number of normalized solutions is
not less than the number of global minimum points of $V$ when the parameter
$\varepsilon$ is sufficiently small. To overcome the loss of
compactness of the energy functional due to the critical growth, we apply the concentration-compactness principle. To the best of our knowledge, this study is  the first contribution regarding the concentration and multiplicity properties of normalized
solutions of critical biharmonic equations with combined nonlinearities in $\mathbb{R}^{N}$. To some extent,  the
main results included in this paper complement several recent contributions to the study of biharmonic equations with combined nonlinearities.
\end{abstract}

\maketitle

\section{Introduction}\label{S1}

In this paper, we intend to study the multiplicity and concentration
of normalized solutions of
 critical biharmonic equations with combined nonlinearities in $\mathbb{R}^{N}$ of the form
\begin{equation}\label{e1.1}
\Delta^{2}u+V(\varepsilon x)u=\lambda u+\mu |u|^{q-2}u+|u|^{2^{**}-2}u
\mbox{ in }\ \mathbb{R}^{N},\ \int_{\mathbb{R}^{N}}|u|^{2}dx=c^{2},
\end{equation}
where $\Delta^{2}$ is the biharmonic operator, $N\geq5$, $\mu,c>0$, $\lambda\in\mathbb{R}$, {$q\in(2,2+\frac{8}{N})$},
and
$2^{**}=\frac{2N}{N-4}$ is the Sobolev critical exponent.
Throughout the paper, we will assume that the potential function $V$ is a bounded and nonnegative continuous function,
   satisfying the following conditions:
\begin{itemize}
\item[$(V_{1})$] $V\in L^{\infty}(\mathbb{R}^{N})$, $V(x)\geq0,$ for every $x\in\mathbb{R}^{N}$.
\item[$(V_{2})$]  $V_{\infty}=\lim_{|x|\rightarrow+\infty}V(x)>V_{0}:=\min_{x\in\mathbb{R}^{N}}V(x)=0$.
\item[$(V_{3})$] $V^{-1}(\{0\})=\{k_{1},k_{2},k_{3},\cdots,k_{l}\}$, where $k_{1}=0$
  and $k_{j}\neq k_{l}$,  if $j\neq l$.
\end{itemize}

Over the past few decades, the biharmonic equation and
its higher-order elliptic equations have
 received a
 great deal of attention, due to their applications in physics and conformal geometry, e.g.
the biharmonic equation can be used to describe the problems of nonlinear oscillation in a suspension bridge,
see Lazer and McKenna \cite{la} and
 McKenna and Walter \cite{mc1},
 and the problem of the static deflections of an elastic plate
in a fluid, see Abrahams and Davis \cite{ab}. In general, there are two aspects to the study of system \eqref{e1.1}.

For the first case, i.e., for the fixed frequency $\lambda$, 
the objective is to find the critical points of the variational functional
$\Upsilon_{\lambda}:H^{2}(\mathbb{R}^{N})\rightarrow\mathbb{R}$
defined by
\begin{equation*}
\Upsilon_{\lambda}(u)=\frac{1}{2}\int_{\mathbb{R}^{N}}(|\Delta u|^{2}+V(\varepsilon x)|u|^{2}-\lambda|u|^{2})
dx-\frac{\mu}{q}\int_{\mathbb{R}^{N}}|u|^{q}dx-\frac{1}{2^{**}}\int_{\mathbb{R}^{N}}|u|^{2^{**}}dx.
\end{equation*}
In recent years, there have been many papers concerning the existence
and multiplicity of solutions for this case.
Some authors have obtained
nontrivial solutions to semilinear biharmonic problems involving critical exponents by using variational methods, see  Alves and  do \'{O} \cite{al2} and Alves et al. \cite{al1}.
Liang et al. \cite{liang1} studied the existence and multiplicity of solutions of biharmonic
equations with critical nonlinearity. Deng and Shuai \cite{de1} proved the existence of
nontrivial solutions for a class of semilinear biharmonic
problems with critical growth and potential
vanishing at infinity. Carranza and Pimenta \cite{car}  studied some systems of elliptic PDEs involving the $\mathit{1}$-Laplacian operator, where the strategy is based on approximation arguments to conclude the existence of solutions as the limit of related problems with the $p$-Laplacian operator. The authors also used a version of  Lions' concentration of compactness principle and suitable estimates. Hai and Zhang \cite{hai} obtained the existence results for nonhomogeneous Choquard equation involving $p$-biharmonic operator and critical growth by using the concentration-compactness principle together with the mountain pass theorem.  Some other interesting results on this topic can be found in  \cite{dw, liang2, sa, zh2, zh1}.

On the other hand, from a physical perspective, there is interest in finding solutions of
 system \eqref{e1.1} with prescribed
mass. For this case, the parameter $\lambda\in\mathbb{R}$ can be regarded as a Lagrange
multiplier, which is determined by the solution and is not given
a priori.  To the best of our knowledge, the study of $L^{2}$-constrained problems can give
a better insight into
 the dynamical properties, since the variational
characterization of these solutions is often very helpful for analyzing
their orbital stability, see Jeanjean et al. \cite{jl2},
Jeanjean and Le \cite{jl1},
and
Li \cite{lx}.
The aim of the present paper is to establish the existence
of multiple weak solutions of
 system \eqref{e1.1}. Throughout the
paper, a solution always refers to a couple $(u,\lambda)$
that satisfies system \eqref{e1.1}. We call these solutions  normalized
solutions, since prescribed mass imposes a normalization on the $L^{2}$-norm of $u$.

Very recently, some  authors have studied the existence,  multiplicity and other properties
of normalized solutions under some assumptions by variational methods.
If $V(x)=0$ and without the critical exponent in system \eqref{e1.1},
Luo and Yang \cite{ly} studied the following nonlinear biharmonic
Schr\"{o}dinger equations of the form
\begin{equation*}
\Delta^{2}u+\mu\Delta u-\lambda u=|u|^{p-2}u
\mbox{ in }\ \mathbb{R}^{N},\
\int_{\mathbb{R}^{N}}|u|^{2}=a^{2},
\end{equation*}
where $N\geq5$, $a,\mu>0$, $2+\frac{8}{N}<p<4^{*}=\frac{2N}{N-4},$
and $\lambda\in\mathbb{R}$. They established
some asymptotic properties of the normalized solutions, as $\mu\rightarrow0^{+}$
and $a\rightarrow0^{+}$.

Wang et al. \cite{ww} considered the following nonlinear biharmonic Schr\"{o}dinger equation:
\begin{equation*}
\Delta^{2}u = \lambda u + h(\varepsilon x)|u|^{p-2}u
\mbox{ in }\ \mathbb{R}^{N},\
\int_{\mathbb{R}^{N}}|u|^{2}=c^{2},
\end{equation*}
where $c, \varepsilon > 0$,  $N \geq 5$, $\lambda \in \mathbb{R}$ is a Lagrange multiplier and is unknown, $h\in C(\mathbb{R}^N, [0, +\infty))$,
and
$f: \mathbb{R} \rightarrow \mathbb{R}$
is a continuous function satisfying $L^2$-subcritical growth. For sufficiently small
 $\varepsilon>0,$
  they proved the existence of multiple normalized
solutions.
Moreover, they also established
 orbital stability of the solutions of
 this problem.
For more results on nonlinear 
 elliptic equations, we refer
readers to \cite{cc,  cw2, liang0, tong1, xiao, zg}
 and  the references
therein.

For critical case, Liu and Zhang \cite{lz} investigated the following biharmonic nonlinear Schr\"{o}dinger equation
  with prescribed $L^2$-norm:
\begin{equation*}\
\Delta^{2}u-\lambda u=\mu |u|^{q-2}u+|u|^{4^{*}-2}u
\mbox{ in }\ \mathbb{R}^{N},\
u\in H^{2}(\mathbb{R}^{N}),~\int_{\mathbb{R}^{N}}|u|^{2}dx=c>0 ,
\end{equation*}
where $N\geq5$, $ c>0$,
and $2+\frac{8}{N} < q < 4^{*}:=\frac{2N}{N-4}$.  They
proved  the existence of normalized solutions for  large enough $\mu>0,$ by verifying the $(PS)$ condition
at the corresponding mountain-pass level. In this sense, they extended the recent results obtained
by Ma and Chang \cite{mc} to the $L^2$-supercritical perturbation.

Chen and Chen \cite{cc}
considered the following  biharmonic Choquard equation with the
Hardy-Littlewood-Sobolev upper critical and combined nonlinearities:
\begin{equation}\label{e1.2}
\Delta^{2}u=\lambda u+\mu |u|^{q-2}u+(I_{\alpha}*|u|^{4_{\alpha}^{*}})|u|^{4_{\alpha}^{*}-2}u
\mbox{ in }\ \mathbb{R}^{N},\
\int_{\mathbb{R}^{N}}|u|^{2}dx=a>0,
\end{equation}
where $N\geq5$, $2<q<2+\frac{8}{N}$, $\mu>0$, $\alpha\in(0, N)$,  $\lambda \in  \mathbb{R}$ appears as a Lagrange multiplier,  $I_{\alpha}$ is
the Riesz potential, and $4_{\alpha}^{*}=\frac{N+\alpha}{N-4}$. Under appropriate assumptions,
they also obtained 
multiple normalized solutions of problem
\eqref{e1.2}.

In the present paper, we will be particularly interested in the existence of normalized solutions
of
the  equation with potential functions.
Namely, the methods in the papers cited above,
strongly depend on the fact that the potential functions $V$ are constant and their proofs
do not work for non-constant $V,$ even if $V$ is radial, i.e., $V(x) = V(|x|)$.
In recent years, some authors have begun to focus on the mass prescribed problem with potential
\begin{align}\label{a1.8}
-\Delta u+(V(x)+\lambda)u = h(u)  &\mbox{ in }\  \mathbb{R}^{N}, \
\displaystyle\int_{\mathbb{R}^{N}}|u|^{2}dx=a^{2}.
\end{align}

Ikoma and Miyamoto \cite{Ik}
used  the standard concentration compactness arguments as
in the seminal papers
by Lions \cite{P.Lion1, P.Lion2} to
obtain
the existence of a normalized solution of problem \eqref{a1.8} with
$V \leq 0$, $V(x)\rightarrow0,$ as $|x|\rightarrow\infty$
{and}  $h$ is subcritical mass.
The conditions on the potential function $V$
 have been considerably relaxed by
 Alves and Ji \cite{Aj} and
 Yang et al. \cite{Y2}.
In the subcritical mass case, one can try to minimize
{the underlying functional} $\mathcal{E}_\lambda$ on $S(a)=\{u \in
H^1(\mathbb{R}^N): |u|_2=a\}$.
 On the other hand,  so far
 for the mass supercritical $h$,
 only
 the  case of
a homogeneous nonlinearity $h(|u|)u = |u|^{p-2}u$ has been considered. Bartsch et al. \cite{Bar2} investigated
decaying potentials, i.e., $V(x)\rightarrow 0,$ as $|x|\rightarrow \infty,$
whereas  Bellazzini
et al. \cite{Bel} treated partially confining potentials.

It is  important to point out that Alves and Thin
\cite{AT} studied the following nonlinear Schr\"{o}dinger equation:
\begin{equation}\label{a1.9}
- \Delta u+V(\varepsilon x) u=\lambda u+g(u)  \mbox{ in }\  \mathbb{R}^{N}, \
\int_{\mathbb{R}^{N}}|u|^{2}dx=a^{2},
\end{equation}
where $a, \varepsilon>0$,  $g$ is a continuous function with $L^2$-subcritical growth, and
$V$ is a continuous function sa\-tisfy\-ing suitable conditions.
 With the help of the Lusternik-Schnirelmann category and
the penalization method, they proved the existence of multiple normalized solutions
of problem~\eqref{a1.9}.

For biharmonic equation with potential functions, Bellazzini and Visciglia \cite{bv1} considered the
following $L^{2}$-subcritical problem:
\begin{equation*}
\Delta^{2}u+V(x)u-\mathcal{Q}(x)|u|^{p-2}u=\lambda u
\mbox{ in }\ \mathbb{R}^{N},\
\int_{\mathbb{R}^{N}}u^{2}dx=c,
\end{equation*}
where $2<p<2+\frac{8}{N}$, $V$
and
$\mathcal{Q}\in L^{\infty}(\mathbb{R}^{N})$.
Under certain  conditions, they proved the existence of ground state
solutions. Moreover,  the orbital stability of the minimizers was also established.

Phan \cite{pt} considered the following biharmonic Schr\"{o}dinger equation with   $L^{2}$-critical nonlinearity:
\begin{equation}\label{e1.3}
\Delta^{2}u+V(x)u-a|u|^{\frac{8}{N}}u=\lambda u
\mbox{ in }\ \mathbb{R}^{N},\
\int_{\mathbb{R}^{N}}|u|^{2}dx=1,
\end{equation}
where parameter $a>0$ stands for the strength of the attraction. Under suitable conditions for the potential function
$V$, he proved that problem \eqref{e1.3} has at least one ground state solution if the parameter $a$
belongs to some specific interval.

Nevertheless, once we turn our attention to the critical biharmonic equation with combined nonlinearities in $\mathbb{R}^{N}$, we observe that the
literature is very scarce. Motivated by the previously mentioned papers, in
the present paper, we intend to prove the multiplicity and concentration
of normalized solutions of system \eqref{e1.1} with critical term.
To the best of our knowledge, there are no known results on the existence
of multiple normalized  solutions of system \eqref{e1.1}.
We now state the main result of the present paper.
\begin{theorem}\label{the1.1}
Suppose that conditions $(V_{1})-(V_{3})$ hold. Then, there exist $\tilde{\varepsilon}$,
$V_{*}$, and $\bar{c}$ such that system \eqref{e1.1} admits at least $l$
pairs of weak solutions $(u_{\varepsilon}^{i},\lambda_{\varepsilon}^{i})\in H^{2}
(\mathbb{R}^{N})\times\mathbb{R}$,  for $|V|_{\infty}<V_{*}$,
$\varepsilon\in(0,\tilde{\varepsilon}),$ and $c\in(0,\bar{c}]$,  with
$\int_{\mathbb{R}^{N}}|u_{\varepsilon}^{i}|^{2}dx=c$, $\lambda_{\varepsilon}^{i}<0$, 
for every $i \in \{1,2,\cdots,k\}$. Furthermore, each $u_{\varepsilon}^{i}$ has a
maximum point $v_{\varepsilon}^{i}\in\mathbb{R}^{N}$ such that 
$$
V(v_{\varepsilon}^{i})
\rightarrow V(x^{i})=V_{0},\quad
 \mbox{as }\, \varepsilon\rightarrow0^{+}.
 $$
\end{theorem}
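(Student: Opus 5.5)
The plan follows the Alves--Thin scheme: minimization on the mass sphere, combined with a truncation of the functional, compactness below a threshold, and barycenter/concentration arguments.

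\textbf{Step 1: truncated functional and its minimum level.} Set $S(c)=\{u\in H^2(\mathbb{R}^N):|u|_2^2=c\}$. Let
\[
J_\varepsilon(u)=\frac12\int(|\Delta u|^2+V(\varepsilon x)u^2)-\frac{\mu}{q}|u|_q^q-\frac1{2^{**}}|u|_{2^{**}}^{2^{**}}
\]
denote the energy. By the Gagliardo--Nirenberg inequality with $q<2+8/N$, together with the Sobolev inequality,
\[
J_\varepsilon(u)\ge h(|\Delta u|_2),\qquad h(t)=\tfrac12t^2-C_1\mu c^{\gamma}t^{q\gamma_q}-C_2t^{2^{**}},\quad q\gamma_q<2 .
\]
For $c\le\bar c$ the function $h$ has a local-minimum-type geometry: it is negative near $0$, positive at some $R_0$, and has a global structure that permits a truncation. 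Following Jeanjean--Le, introduce a cutoff $\tau(|\Delta u|_2)$ in front of the critical term (and if needed the $q$-term). The truncated functional $\tilde J_\varepsilon$ is then coercive and bounded below on $S(c)$. Its critical points with $|\Delta u|_2<R_0$ are critical points of $J_\varepsilon$ and hence solve \eqref{e1.1} with a Lagrange multiplier $\lambda$.

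Define the infima $m_\varepsilon=\inf_{S(c)}\tilde J_\varepsilon$, $m_0$ (the problem with $V\equiv0$) and $m_\infty$ (the problem with $V\equiv V_\infty$). Then $m_0<0$, and $m_0<m_\infty=m_0+\frac{V_\infty}{2}c$ when $V_\infty>0$. Testing with translates $u_0(\cdot-k_i/\varepsilon)$ of a minimizer $u_0$ of $m_0$ gives $\limsup_{\varepsilon\to0} m_\varepsilon\le m_0$.

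\textbf{Step 2: compactness (the main obstacle).} I would prove that $\tilde J_\varepsilon|_{S(c)}$ satisfies the $(PS)_d$ condition for $d<m_0+\rho$ with some $\rho>0$ small, using the concentration-compactness principle for $\Delta^2$. Let $(u_n)$ be a bounded $(PS)_d$ sequence.
\begin{itemize}
\item \emph{Vanishing} is excluded because $d<0$: it would force $|u_n|_q\to0$, and the truncation keeps $|\Delta u_n|_2$ small so that the critical term cannot produce negative energy.
\item \emph{Concentration of critical mass} (atoms $\nu_j\ge S^{N/4}$) is excluded for $c\le\bar c$, since such atoms are incompatible with $|\Delta u_n|_2<R_0$ small.
\item \emph{Dichotomy / loss of mass to infinity.} Here I would use $\lambda_n\to\lambda<0$; this follows from the Pohozaev-type identity and $d<0$, which gives $\lambda c\le 2d<0$. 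Combined with the strict subadditivity $m_{\theta}<m_{\alpha}+m_{\theta-\alpha}$ and $m_0<m_\infty$, Brezis--Lieb splitting yields $u_n\to u$ strongly in $L^2$. The smallness $|V|_\infty<V_*$ is used to keep the minimizers inside the truncation region and to make the error terms $\int V(\varepsilon x)u^2$ controllable.
\end{itemize}
This is the delicate part. Both the critical exponent and the potential must be handled simultaneously, and the constants $\bar c$ and $V_*$ have to be fixed so that all exclusions hold at once.

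\textbf{Step 3: multiplicity via localized minimization.} Choose $r>0$ with the balls $B_r(k_i)$ disjoint, and define the barycenter-type map
\[
\beta(u)=\frac{\int \chi(\varepsilon x)|u|^2}{c},
\]
with $\chi$ a bounded truncation of the identity. Set
\[
\theta_\varepsilon^i=\{u\in S(c):|\beta(u)-k_i|<r\},\qquad \partial\theta_\varepsilon^i=\{u\in S(c):|\beta(u)-k_i|=r\},
\]
and $\alpha_\varepsilon^i=\inf_{\theta_\varepsilon^i}\tilde J_\varepsilon$, $\tilde\alpha_\varepsilon^i=\inf_{\partial\theta_\varepsilon^i}\tilde J_\varepsilon$.

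A profile-decomposition argument shows the following: any sequence $u_n$ with $\varepsilon_n\to0$ and $\tilde J_{\varepsilon_n}(u_n)\to m_0$ has, after translation by $y_n$, a strongly convergent subsequence with $\varepsilon_ny_n\to k_j$ for some $j$. The reason is that $V(\varepsilon_n y_n)\to$ anything positive, or $|\varepsilon_n y_n|\to\infty$, would raise the level above $m_0$. Consequently $\tilde\alpha_\varepsilon^i>m_0+\rho/2>\alpha_\varepsilon^i$ for $\varepsilon<\tilde\varepsilon$.

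Ekeland's principle on $\theta_\varepsilon^i$ then produces a $(PS)$ sequence at level $\alpha_\varepsilon^i<m_0+\rho$. Step 2 yields a minimizer $u_\varepsilon^i$ with $\lambda_\varepsilon^i<0$. The $l$ sets $\theta_\varepsilon^i$ are disjoint, so these minimizers are distinct, giving $l$ solutions.

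\textbf{Step 4: concentration.} Along $\varepsilon\to0$, write $u_\varepsilon^i=w_\varepsilon(\cdot-y_\varepsilon)$ with $w_\varepsilon\to w$ in $H^2$ and $\varepsilon y_\varepsilon\to k_i$. Elliptic estimates for $\Delta^2$ (bootstrap in $W^{4,p}$) give uniform $L^\infty$ bounds and uniform decay $w_\varepsilon(x)\to0$ as $|x|\to\infty$. Hence a maximum point $p_\varepsilon$ of $|w_\varepsilon|$ stays bounded, and $v_\varepsilon^i=\varepsilon(p_\varepsilon+y_\varepsilon)$ (in the original variable) satisfies $V(v_\varepsilon^i)\to V(k_i)=V_0$.
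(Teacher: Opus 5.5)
Your outline follows the paper's route: cutoff in $\|\Delta u\|_2$, $m_0<m_\infty$, $\limsup_{\varepsilon\to0}m_\varepsilon\le m_0$, compactness below $m_0+\rho$, the sets $\theta_\varepsilon^i$ with Ekeland, and decay of the translated profiles. Your identity $m_\infty=m_0+\frac{V_\infty}{2}c$ neatly replaces the paper's Lemma 3.2 and Corollary 3.1. The step that does not close as written is the dichotomy bullet of Step 2. Suppose $u_n\rightharpoonup u$ with $|u|_2^2=b<c$, and set $v_n=u_n-u$ with $|v_n|_2^2\to e=c-b$. Energy splitting and scaling then give at best
\[
d\;\ge\;m_{0,b}+m_{\infty,e}+o(1)\;\ge\;\tfrac{b}{c}\,m_0+\tfrac{e}{c}\,m_\infty+o(1)\;=\;m_0+\tfrac{V_\infty}{2}\,e+o(1).
\]
Strict subadditivity only adds a margin that tends to $0$ with $e$. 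So you contradict $d<m_0+\rho$ only when $e\ge 2\rho/V_\infty$, and nothing you list excludes a small escaping mass. What is missing is the paper's Lemma 4.4: if $u_n\not\to u$, then $\liminf_n|u_n-u|_2\ge\beta$ with $\beta$ \emph{independent of} $\varepsilon$; only then do you fix $\rho$ below $\frac{V_\infty}{2}\beta^2$. The uniformity is essential. In Step 3 you only know $\alpha_\varepsilon^i<m_0+\rho_1/2$ with $\rho_1$ independent of $\varepsilon$, so an $\varepsilon$-dependent compactness threshold would not let you invoke Step 2 at those levels. Your multiplier bound is exactly the tool that supplies $\beta$:
\begin{itemize}
\item $\lambda_\varepsilon\le 2d/c\le 2(m_0+\rho)/c<0$ uniformly in $\varepsilon$.
\item $v_n$ is asymptotically a free Palais--Smale sequence for $\frac12\int(|\Delta v|^2+(V(\varepsilon x)-\lambda_\varepsilon)v^2)-\int G(v)$.
\item Testing with $v_n$ and absorbing the critical term, using $\|\Delta v_n\|_2^2\le R_0^2+o(1)$ with $R_0^2<S^{N/4}$, gives $\|v_n\|_{H^2}\ge C$.
\item Gagliardo--Nirenberg with $q\gamma_q<2$ then converts this into $|v_n|_2\ge\beta$, with all constants independent of $\varepsilon$.
\end{itemize}

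A lesser point concerns Step 4: at the critical exponent a plain $W^{4,p}$ bootstrap does not start. Indeed $|w|^{2^{**}-2}w\in L^{2N/(N+4)}$ is mapped by $(\Delta^2+\kappa)^{-1}$ only back into $L^{2^{**}}$. You need a Brezis--Kato type splitting $|w_\varepsilon|^{2^{**}-2}=a^1_\varepsilon+a^2_\varepsilon$, with $\|a^1_\varepsilon\|_{N/4}$ small and $\|a^2_\varepsilon\|_\infty$ bounded uniformly in $\varepsilon$. This is available because $w_\varepsilon\to w$ in $L^{2^{**}}$ makes $\{|w_\varepsilon|^{2^{**}}\}$ uniformly integrable. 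You then absorb $a^1_\varepsilon w_\varepsilon$ through $L^p$ estimates for $\Delta^2-\lambda_\varepsilon$, which are uniform since $\lambda_\varepsilon$ stays uniformly negative. After that, the uniform $L^\infty$ bound and uniform decay follow as you say; the paper does this step by a Moser-type iteration instead. Finally, in your setup every level you use lies below $m_0+\rho<0$ independently of $V$, so nothing requires $m_\infty<0$. The hypothesis $|V|_\infty<V_*$, which in the paper serves only to make $m_\infty<0$, is therefore not needed in your argument, and you do not have to assign it the role you describe.
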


The proof of Theorem \ref{the1.1} is based on suitable variational and topological arguments. Because of the appearance of the biharmonic
operator and critical exponent, we have to establish new estimates.  Moreover, due to $\lambda$
not being prescribed, the sequences of
approximated Lagrange multipliers have to be controlled. All these difficulties will make the problems we study very interesting but complex. In order to clarify our contributions in relation to
previous results, we emphasize the following points.
\begin{itemize}
\item[$(1)$]
The proof of the existence of normalized solutions is fairly delicate due to 
the presence of the  critical exponent,
combined with a non-constant potential $V(x)$.
When dealing with the $L^p$-subcritical case, we cannot directly prove
that
the functional $\mathcal{E}_\lambda$ is bounded from below  as in
 Alves and Ji \cite{Aj}
 and
Jeanjean  \cite{jea}.
 Moreover,
  the potential $V$ can have infimum equal to zero, and then the penalized method found in del Pino and Felmer \cite{del}
does not work well in our case.

\item[$(2)$]  The main difficulty in the present paper is the analysis of the convergence of constrained Palais-Smale sequences. Indeed, the critical term and the unbounded region occur at the same time  in system \eqref{e1.1} making the bounded $(PS)$ sequences not necessarily convergent. Hence, we have to consider
how the interaction between the nonlocal term and the nonlinear term will affect the existence
of solutions of system \eqref{e1.1}. Another  main difficulty  is that sequences of approximated
Lagrange multipliers have to be controlled, since $\lambda$ is not prescribed.  Furthermore, weak limits
of the Palais-Smale sequences could leave a constraint.
These facts produce  lack of compactness which we overcome,
using the concentration-compactness principles due to Lions \cite{P.Lion1, P.Lion2}.

\item[$(3)$] We apply the minimization techniques and the Lusternik-Schnirelmann
category to prove  the relationship between the potential and multiplicity and concentration of
normalized solutions. We think that the methods in our paper
can be applied to study the multiple normalized solutions for other kinds of operators, as well as autonomous problem.
\end{itemize}

This paper is organized as follows:
 In Section \ref{S2}, we introduce the variational
setting and give preliminary lemmas.
In Section \ref{S3}, we study the autonomous problem
with truncated function and show some essential properties of the autonomous problem corresponding energy functional. In Section \ref{S4}, we consider the energy functional of  the non-autonomous problem. By Moser iteration, we obtain that critical points of the truncated functional are actually the solution of the original system. In Section \ref{S5}, we establish the multiplicity of solutions of system
\eqref{e1.1} and then prove Theorem \ref{the1.1}. Finally, in Section \ref{S6},  we summarize the most important features of the main result.

\section{Preliminaries}\label{S2}

In this section, we will introduce the key definitions and results. For all other fundamental material used in this paper,
     we refer the reader to the comprehensive monograph by Papageorgiou et al.  \cite{PRR}.

     We begin with the following two definitions:

(1) $S_c=\{u\in H^{2}(\mathbb{R}^{N}):|u|_{2}=c\}$ is the sphere of
  radius $c>0$ defined with the norm $|\cdot|_{2}.$ \\

(2) $\Upsilon:H^{2}(\mathbb{R}^{N})\rightarrow\mathbb{R}$ with
  \begin{equation*}
  \Upsilon(u)=\frac{1}{2}\int_{\mathbb{R}^{N}}(|\Delta u|^{2}+V(\varepsilon x)|u|^{2})dx
  -\int_{\mathbb{R}^{N}}G(u)dx,
  \end{equation*}
  where
  $$G(t)=\frac{\mu}{q}|u|^{q}+\frac{1}{2^{**}}|u|^{2^{**}},\quad t\in\mathbb{R}.$$
  In this section, we denote
   the function $g(t)=\mu|t|^{q-2}t+|t|^{2^{**}-2}t$
  with $t\in\mathbb{R}$, and hence
   $G(t)=\int_0^t g(\tau)d\tau$.
Let $H^{2}(\mathbb{R}^{N})$ is the Sobolev space given by
  $$H^{2}(\mathbb{R}^{N})=\{u\in L^{2}(\mathbb{R}^{N}):|u|_{2}<\infty\},$$
  endowed with the following norm:
  $$\|u\|^{2}_{H^{2}}=\int_{\mathbb{R}^{N}}(|\Delta u|^{2}+|u|^{2})dx,$$
  which is equivalent to the following norm:
  $$\|u\|_{V_{0}}=(|\Delta u|_{2}^{2}+|u|_{2,V_{0}}^{2})^{\frac{1}{2}},\quad
  \|u\|_{2,V_{0}}^{2}=\int_{\mathbb{R}^{N}}V_{0}|u(x)|^{2}dx.$$
Set $H_{\varepsilon}(\mathbb{R}^{N})$ to be the completion of
  $C_{0}^{\infty}(\mathbb{R}^{N})$, with respect to the norm
  $$\|u\|_{\varepsilon}=(|\Delta u|_{2}^{2}+|u|_{2,V,\varepsilon}^{2})^{\frac{1}{2}},
  \quad \|u\|_{2,V,\varepsilon}^{2}=\int_{\mathbb{R}^{N}}V(\varepsilon x)|u(x)|^{2}dx.$$

We will need to make frequent use of the well-known Gagliardo-Nirenberg inequality.
\begin{lemma}\label{lem2.1}
(see Nirenberg \cite[Theorem in Lecture II]{nb}) Let $u\in H^{2}(\mathbb{R}^{N})$ and
suppose that there exists a constant $C_{N,m}$ depending on
$m,N$ such that
$$\|u\|_{m}\leq C_{N,m}\|u\|_{2}^{1-\gamma_{m}}\|\Delta u\|_{2}^{\gamma_{m}},$$
where $\gamma_{m}=\frac{N}{2}(\frac{1}{2}-\frac{1}{m})$.
In particular,
 when $m=2^{**}$,
we get that $\gamma_{2^{**}}=1$ and
\begin{equation}\label{e2.1}
S:=\inf_{u\in H^{2}(\mathbb{R}^{N})\setminus\{0\}}
\frac{\int_{\mathbb{R}^{N}}|\Delta u|^{2}dx}{\left(\int_{\mathbb{R}^{N}}|u|^{2^{**}}dx\right)^{\frac{2}{2^{**}}}}.
\end{equation}
\end{lemma}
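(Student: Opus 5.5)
The plan is to derive the inequality from two ingredients: the critical Sobolev embedding $\|u\|_{2^{**}}\le C\|\Delta u\|_2$, and Hölder interpolation between $L^2$ and $L^{2^{**}}$. The range of exponents is $m\in[2,2^{**}]$, which is the range in which $\gamma_m=\frac N2(\frac12-\frac1m)\in[0,1]$. By density it suffices to treat $u\in C_0^\infty(\mathbb{R}^N)$; the general case $u\in H^2(\mathbb{R}^N)$ then follows by approximation and Fatou's lemma.

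\emph{Step 1: the critical Sobolev inequality.} For $u\in C_0^\infty(\mathbb{R}^N)$, Plancherel gives
\[
\sum_{i,j}\|\partial_{ij}u\|_2^2=\int_{\mathbb{R}^N}|\xi|^4|\hat u|^2\,d\xi=\|\Delta u\|_2^2 .
\]
Hence $\|D^2u\|_2=\|\Delta u\|_2$. Apply the first-order Sobolev inequality ($p=2<N$) to each $\partial_i u$:
\[
\|\nabla u\|_{2^*}\le C\|D^2u\|_2=C\|\Delta u\|_2,\qquad 2^*=\tfrac{2N}{N-2}.
\]
Since $N\ge5$, we have $2^*<N$. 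Applying the $W^{1,2^*}$ Sobolev inequality to $u$ then gives
\[
\|u\|_{p^*}\le C\|\nabla u\|_{2^*},\qquad p^*=\frac{N2^*}{N-2^*}=\frac{2N}{N-4}=2^{**}.
\]
Combining the two bounds yields $\|u\|_{2^{**}}\le C\|\Delta u\|_2$. This is exactly the case $m=2^{**}$, where $\gamma_{2^{**}}=\frac N2(\frac12-\frac{N-4}{2N})=\frac N2\cdot\frac2N=1$. It also shows that the constant $S$ in (\ref{e2.1}) is strictly positive, namely $S\ge C^{-2}$.

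\emph{Step 2: interpolation.} For $m\in[2,2^{**}]$, choose $\theta\in[0,1]$ with $\frac1m=\frac{1-\theta}{2}+\frac{\theta}{2^{**}}$. Hölder's inequality gives
\[
\|u\|_m\le\|u\|_2^{1-\theta}\|u\|_{2^{**}}^{\theta}\le C^{\theta}\|u\|_2^{1-\theta}\|\Delta u\|_2^{\theta}.
\]
Solving for $\theta$: $\frac12-\frac1m=\theta(\frac12-\frac1{2^{**}})=\theta\cdot\frac2N$, so $\theta=\frac N2(\frac12-\frac1m)=\gamma_m$. This gives the stated inequality with $C_{N,m}=C^{\gamma_m}$.

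The only step that is not routine is Step 1, specifically controlling $\|D^2u\|_2$ by $\|\Delta u\|_2$; the Fourier identity above handles it. An alternative route would be to write $u=I_2*(-\Delta u)$ with the Riesz potential $I_2$ and apply the Hardy–Littlewood–Sobolev inequality from $L^2$ to $L^{2N/(N-4)}$.
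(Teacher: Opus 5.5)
Your argument is correct, but it does not follow the paper's route: the paper gives no proof of this lemma and simply invokes Nirenberg's general Gagliardo--Nirenberg theorem. You instead derive the special case needed here from scratch, in three steps. First, the Plancherel identity $\|D^{2}u\|_{2}=\|\Delta u\|_{2}$. Second, two applications of the first-order Sobolev inequality, the second of which requires $2^{*}<N$, i.e. $N\geq5$; together they give $\|u\|_{2^{**}}\leq C\|\Delta u\|_{2}$. Third, H\"older interpolation between $L^{2}$ and $L^{2^{**}}$, which forces $\theta=\gamma_{m}$.

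Your route gives a self-contained proof and makes explicit three things the paper leaves implicit. The first is the admissible range $m\in[2,2^{**}]$, which the statement never specifies; it contains the two exponents used later, $m=q$ and $m=2^{**}$. The second is that the constant $S$ in \eqref{e2.1} is positive, which the paper relies on without comment (e.g. in the definitions of $F$ and $H$ and in \eqref{e2.3}). The third is an explicit admissible constant: once $S>0$ is known you may take $C=S^{-1/2}$, hence $C_{N,m}=S^{-\gamma_{m}/2}$.

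Citing Nirenberg gives more generality, since his theorem also covers interpolation inequalities with intermediate derivatives on either side. But here the left-hand side is a pure Lebesgue norm of $u$, so your Sobolev-plus-H\"older argument is all that is needed.
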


In order to prove that the corresponding energy functional of system \eqref{e1.1} satisfies the
compactness condition, we will need the following concentration-compactness principle which is similar to the methods used in Lions \cite[Lemma I.1]{P.Lion1, P.Lion2} and Smets \cite[Lemma 2.1]{sm}.
Therefore, we will skip the proofs of Lemmas \ref{lem2.2} and \ref{lem2.3}.
\begin{lemma}\label{lem2.2}
Let $\{u_{n}\}$ be a sequence weakly converging  to $u$ in $H^{2}(\mathbb{R}^{N})$
such that 
$$|u_{n}|^{2^{**}}\rightarrow \nu\quad
\mbox{and} \quad|\Delta u_{n}|^{2}\rightharpoonup \kappa$$
in the sense of measures. Then, for some at most countable index set $I$,
\begin{itemize}
\item[$(i)$] $\nu=|u|^{2^{**}}+\sum_{i\in I}\delta_{x_{i}}v_{i}$, $v_{i}>0$;

\item[$(ii)$] $\kappa>|\Delta u_{n}|^{2}+\sum_{i\in I}\delta_{x_{i}}\mu_{i}$, $\mu_{x_{i}}>0$;

\item[$(iii)$] $\kappa_{i}\geq S\nu_{i}^{\frac{2}{2^{**}}}$.
\end{itemize}
Here, $x_{i}\in\mathbb{R}^{N}$, $\delta_{x_{i}}$ is the Dirac measure
at $x_{i},$
$S$ is given by \eqref{e2.1} and $\kappa_{i}$, $\nu_{i}$ are
some positive constants.
\end{lemma}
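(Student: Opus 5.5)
This is the biharmonic version of Lions' second concentration-compactness lemma, and I would follow Lions' argument with $\nabla$ replaced by $\Delta$. The extra work comes from the lower-order terms that appear when the Laplacian falls on a product with a cutoff. The plan has three steps.

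\textbf{Step 1: reduce to $u=0$.} Set $w_n=u_n-u$, so $w_n\rightharpoonup 0$ in $H^2(\mathbb{R}^N)$. By Rellich, $w_n\to 0$ in $H^1_{loc}$ and in $L^2_{loc}$, and, passing to a subsequence, $w_n\to0$ a.e. By the Brezis--Lieb lemma,
$$|u_n|^{2^{**}}-|w_n|^{2^{**}}-|u|^{2^{**}}\to0 \quad\text{in } L^1.$$
Hence $|w_n|^{2^{**}}\rightharpoonup \bar\nu:=\nu-|u|^{2^{**}}$ in the sense of measures. Moreover $|\Delta u_n|^2=|\Delta w_n|^2+2\Delta w_n\Delta u+|\Delta u|^2$ and $\Delta w_n\rightharpoonup0$ in $L^2$. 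So $|\Delta w_n|^2\rightharpoonup\bar\kappa:=\kappa-|\Delta u|^2$, after extracting a further subsequence if needed. Item (ii) is then the statement $\bar\kappa\ge\sum_{i\in I}\mu_i\delta_{x_i}$, with $\kappa\ge|\Delta u|^2$ read in the weak limit sense.

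\textbf{Step 2: a reverse Hölder inequality for the limit measures.} This is the key step. Fix $\varphi\in C_0^\infty(\mathbb{R}^N)$ and apply \eqref{e2.1} to $\varphi w_n$:
$$S\Big(\int|\varphi|^{2^{**}}|w_n|^{2^{**}}\Big)^{2/2^{**}}\le\int|\Delta(\varphi w_n)|^2 .$$
Expand $\Delta(\varphi w_n)=\varphi\Delta w_n+2\nabla\varphi\cdot\nabla w_n+w_n\Delta\varphi$. The last two terms are supported in $\mathrm{supp}\,\varphi$ and tend to zero in $L^2$ by local compactness. The cross terms with $\varphi\Delta w_n$ therefore also vanish, by Cauchy--Schwarz and the boundedness of $\|\Delta w_n\|_2$. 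Letting $n\to\infty$ gives
$$S\Big(\int|\varphi|^{2^{**}}d\bar\nu\Big)^{2/2^{**}}\le\int|\varphi|^2d\bar\kappa .$$
I expect this to be the only place needing care: the biharmonic operator produces the first-order term $\nabla\varphi\cdot\nabla w_n$, which the second-order case does not have. It is handled precisely because $H^2\hookrightarrow H^1_{loc}$ compactly.

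\textbf{Step 3: atomic structure and conclusion.} The reverse Hölder inequality with exponent $2^{**}/2>1$ gives the structure in the standard way (Lions, Lemma I.2). If $\varphi$ approximates $\chi_E$, then $\bar\nu(E)^{2/2^{**}}\le S^{-1}\bar\kappa(E)$ for Borel $E$. So $\bar\nu\ll\bar\kappa$, and at every point $x$ where $\bar\kappa$ has no atom we get
$$\frac{d\bar\nu}{d\bar\kappa}(x)\le\lim_{r\to0}S^{-2^{**}/2}\,\bar\kappa(B_r(x))^{2^{**}/2-1}=0 .$$
Hence $\bar\nu$ is concentrated on the atoms $\{x_i\}$ of the finite measure $\bar\kappa$, which are at most countable. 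This gives $\bar\nu=\sum_{i\in I}\nu_i\delta_{x_i}$, which is (i).

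Next, testing with $\varphi$ concentrating at $x_i$, i.e. $\varphi=1$ at $x_i$ with support shrinking, gives
$$\kappa_i:=\bar\kappa(\{x_i\})\ge S\nu_i^{2/2^{**}} .$$
This is (iii). Finally, $\bar\kappa\ge\sum_{i\in I}\kappa_i\delta_{x_i}$ because $\bar\kappa\ge0$, and adding back $|\Delta u|^2$ gives (ii) with $\mu_i=\kappa_i$.
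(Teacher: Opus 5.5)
Your proof is correct and is the standard Lions argument adapted to $\Delta$: the Brezis--Lieb reduction to $w_n=u_n-u$, then the reverse H\"older inequality $S\bigl(\int|\varphi|^{2^{**}}d\bar\nu\bigr)^{2/2^{**}}\le\int|\varphi|^2d\bar\kappa$ obtained from \eqref{e2.1} applied to $\varphi w_n$ (with the lower-order terms removed by the compact embedding $H^2\hookrightarrow H^1_{loc}$), then Lions' Lemma I.2 — exactly what the paper invokes when it omits the proof and cites Lions and Smets. You also read item (ii) correctly as $\kappa\ge|\Delta u|^2+\sum_{i\in I}\kappa_i\delta_{x_i}$ with $\mu_i=\kappa_i$, which is what the garbled statement (written with $>$ and $|\Delta u_n|^2$) must mean.
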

\begin{lemma}\label{lem2.3}
Let $\{u_{n}\}$ be a  sequence weakly converging to $u$
in $H^{2}(\mathbb{R}^{N})$ and define
\begin{itemize}
\item[$(i)$] $\nu_{\infty}=\lim_{R\rightarrow\infty}\limsup_{n\rightarrow\infty}\int
_{|x|>R}|u_{n}|^{2^{**}}dx;$

\item[$(ii)$] $\kappa_{\infty}=\lim_{R\rightarrow\infty}\limsup_{n\rightarrow\infty}\int
_{|x|>R}|\Delta u_{n}|^{2}dx.$
\end{itemize}
Then the quantities $\nu_{\infty}$ and $\kappa_{\infty}$ exist and satisfy
\begin{itemize}
\item[$(iii)$] $\limsup_{n\rightarrow\infty}\int_{\mathbb{R}^{N}}|u_{n}|^{2^{**}}dx=
\int_{\mathbb{R}^{N}}d\nu+\nu_{\infty}$;

\item[$(iv)$]  $\limsup_{n\rightarrow\infty}\int_{\mathbb{R}^{N}}|\Delta u_{n}|^{2}dx
=\int_{\mathbb{R}^{N}}d\kappa+\kappa_{\infty}$;

\item[$(v)$] $\kappa_{\infty}\geq S\nu_{\infty}^{\frac{2}{2^{**}}}$.
\end{itemize}
\end{lemma}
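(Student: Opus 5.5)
The statement immediately above is Lemma \ref{lem2.3}, the concentration-compactness principle at infinity. I would prove it by adapting Lions' argument and Smets' version to the second-order quantity $|\Delta u_n|^2$. Fix a cut-off $\psi_R\in C^\infty(\mathbb{R}^N)$ with $0\le\psi_R\le1$, $\psi_R=0$ on $B_R$, $\psi_R=1$ outside $B_{R+1}$, $|\nabla\psi_R|\le C$ and $|\Delta\psi_R|\le C$. Then
\[
\int_{|x|>R+1}|u_n|^{2^{**}}\le\int\psi_R^{2^{**}}|u_n|^{2^{**}}\le\int_{|x|>R}|u_n|^{2^{**}},
\]
so $R\mapsto\limsup_n\int_{|x|>R}|u_n|^{2^{**}}dx$ is nonincreasing and bounded by $\sup_n|u_n|_{2^{**}}^{2^{**}}<\infty$. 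Hence the limit $\nu_\infty$ exists, and it can equivalently be computed with $\psi_R$ weights. The same argument gives $\kappa_\infty$, which proves (i)--(ii).

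For (iii), I would split
\[
\int|u_n|^{2^{**}}=\int(1-\psi_R^{2^{**}})|u_n|^{2^{**}}+\int\psi_R^{2^{**}}|u_n|^{2^{**}}.
\]
Since $1-\psi_R^{2^{**}}$ is compactly supported and continuous, the first term tends to $\int(1-\psi_R^{2^{**}})\,d\nu$ as $n\to\infty$. Taking $\limsup_n$ and then $R\to\infty$, monotone convergence turns the first term into $\int d\nu$ and the second into $\nu_\infty$. Item (iv) follows identically with $|\Delta u_n|^2$ and $\kappa$.

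The main step is (v). I would apply the Sobolev inequality \eqref{e2.1} to $\psi_Ru_n$:
\[
S\Big(\int|\psi_Ru_n|^{2^{**}}\Big)^{2/2^{**}}\le\int|\Delta(\psi_Ru_n)|^2 .
\]
Next I expand
\[
\Delta(\psi_Ru_n)=\psi_R\Delta u_n+2\nabla\psi_R\cdot\nabla u_n+u_n\Delta\psi_R .
\]
The cross terms are supported in the annulus $A_R=\{R\le|x|\le R+1\}$. By Rellich on bounded sets, $u_n\to u$ in $L^2(A_R)$ and $\nabla u_n\to\nabla u$ in $L^2(A_R)$, since $H^2_{loc}\hookrightarrow H^1_{loc}$ compactly. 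Therefore
\[
\limsup_n\Big|\int|\Delta(\psi_Ru_n)|^2-\int\psi_R^2|\Delta u_n|^2\Big|\le C\Big(\|u\|_{H^1(A_R)}^2+\|u\|_{H^1(A_R)}\sup_n\|\Delta u_n\|_2\Big).
\]
This bound tends to $0$ as $R\to\infty$ because $u\in H^2(\mathbb{R}^N)$. Taking $\limsup_n$ and then $R\to\infty$, and using $\psi_R^2\le\mathbf 1_{\{|x|>R\}}$ and $\psi_R^{2^{**}}\ge\mathbf 1_{\{|x|>R+1\}}$, gives $S\nu_\infty^{2/2^{**}}\le\kappa_\infty$. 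Handling these cross terms is where the biharmonic case differs from the $H^1$ case: it requires the local $H^1$ compactness and the uniform $L^2$ bound on $\Delta u_n$. I expect this to be the only genuinely delicate point.
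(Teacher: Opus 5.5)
Your proposal is correct and is the standard Lions-type cut-off-at-infinity argument that the paper relies on; the paper omits the proof and cites Lions and Smets. You correctly handle the extra biharmonic cross terms $2\nabla\psi_R\cdot\nabla u_n+u_n\Delta\psi_R$ via the compact local embedding $H^2\hookrightarrow H^1$ together with $\|u\|_{H^1(\{|x|\ge R\})}\to 0$. The one thing to state explicitly is the hypothesis inherited from Lemma~\ref{lem2.2}, namely that $|u_n|^{2^{**}}\rightharpoonup\nu$ and $|\Delta u_n|^2\rightharpoonup\kappa$ vaguely (passing to a subsequence if needed), since your proofs of (iii)--(iv) depend on it.
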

Given $c>0$, we study the following function:
$$F(c,m)=\frac{1}{2}-\frac{\mu}{q}C_{N,q}^{q}c^{\frac{4q-(q-2)N}{8}}m^{\frac{(q-2)N-8}{4}}
-\frac{1}{2^{**}}S^{-\frac{2^{**}}{2}}m^{2^{**}-2},\quad m>0.$$
In addition, if $c\in(0,\infty)$ is fixed, we assume that $F_{c}(m):=F(c,m)$.
Note that $m\gamma_{m}<2$ for $2<m<\frac{8+2N}{N}$, $m\gamma_{m}=2$ for
$m=\frac{8+2N}{N}$ and $m\gamma_{m}>2$ for $\frac{8+2N}{N}<m<\frac{2N}{N-4}$.
Since $2<q<2+\frac{8}{N}$,
similarly as in Jeanjean \cite[Lemma 2.1]{jl2},
we can see that $F_{c}(m)\rightarrow-\infty,$
as $m\rightarrow0^{+}$ and $F_{c}(m)\rightarrow-\infty,$ as $m\rightarrow\infty$.
Then there exists $m_{c}>0$, the function $F_{c}(m)$ has a unique global maximum,
and the maximum value satisfies
\begin{equation*}
\max_{m>0}F_{c}(m)
\left\{
\begin{array}{lll}
>0 &\mbox{if}\ c<m_{c},\\
=0 &\mbox{if}\ c=m_{c},\\
<0 &\mbox{if}\ c>m_{c}.
\end{array}
\right.
\end{equation*}
Moreover, for each $c>0$, we define
$$H(c,m)=\frac{1}{2}m^{2}-\frac{\mu}{q}C_{N,q}^{q}c^{\frac{4q-(q-2)N}{8}}m^{\frac{(q-2)N}{4}}-
\frac{1}{2^{**}}S^{-\frac{2^{**}}{2}}m^{2^{**}},\quad m>0.$$
Now, we study the properties of $H_{c}(m):=H(c,m)=m^{2}F_{c}(m)$.
For $c<m_{c}$, 
$$\lim\limits_{m\rightarrow0^{+}}H_{c}(m)=0^{-}\quad \mbox{and}\quad
\lim\limits_{t\rightarrow+\infty}H_{c}(t)=-\infty. $$
By \eqref{e2.1},
$H_{c}(m)$ attains its positive global maximum if $c<m_{c}$. In the sequel,
we will always assume that $a\leq\bar{c}<m_{c}.$ Thus, there will exist $0<R_{0}<R_{1}<+\infty$
depending on $c$ such that
$$H_{c}(R_{0})=H_{c}(R_{1})=0.$$
Moreover, $H_{c}(m)<0$ on the intervals $(0,R_{0})$ and
$(R_{1},+\infty),$ and $H_{c}(m)>0$ on the interval $(R_{0},R_{1}).$
Let $\eta:R^{+}\rightarrow[0,1]$ be a non-increasing and
$C^{\infty}$ function satisfying
\begin{equation*}
\eta(x)=
\left\{
\begin{array}{lll}
1 &\mbox{if}\ 0\leq x\leq R_{0},\\
0 &\mbox{if}\ x\geq R_{1}.
\end{array}
\right.
\end{equation*}
By Lemma \ref{lem2.1} and $(V_{1}),$ we have, for every $u\in S_{c}$,
\begin{eqnarray*}
\Upsilon_{\varepsilon}(u) 
&\geq&\nonumber
\frac{1}{2}\|\Delta u\|_{2}^{2}-\frac{\mu}{q}\int_{\mathbb{R}^{N}}|u|^{q}dx-\frac{\eta(\|\Delta u\|_{2})}{2^{**}}\int_{\mathbb{R}^{N}}|u|^{2^{**}}dx \\
&\geq&\frac{1}{2}\|\Delta u\|_{2}^{2}-\frac{\mu}{q}C_{N,q}c^{\frac{4q-(q-2)N}{8}}\|\Delta u\|_{2}^{\frac{(q-2)N}{4}}-
\frac{\eta(\|\Delta u\|_{2})}{2^{**}}S^{-\frac{2^{**}}{2}}\|\Delta u\|_{2}^{2^{**}}\\
&=& H(c,\|\Delta u\|_{2}).
\end{eqnarray*}
We study the truncated functional
$$\Upsilon_{\varepsilon,T}(u)=\frac{1}{2}\|\Delta u\|_{2}^{2}+\frac{1}{2}\int_{\mathbb{R}^{N}}V(\varepsilon x)|u|^{2}dx-\frac{\mu}{q}\int_{\mathbb{R}^{N}}|u|^{q}dx-\frac{\eta(\|\Delta u\|_{2})}{2^{**}}\int_{\mathbb{R}^{N}}|u|^{2^{**}}dx.$$
By Lemma \ref{lem2.1} and $(V_{1})$, we obtain
\begin{eqnarray*}
\Upsilon_{\varepsilon,T}(u) &\geq&\nonumber
\frac{1}{2}\|\Delta u\|_{2}^{2}-\frac{\mu}{q}C_{N,q}c^{\frac{4q-(q-2)N}{8}}\|\Delta u\|_{2}^{\frac{(q-2)N}{4}}-
\frac{\eta(\|\Delta u\|_{2})}{2^{**}}S^{-\frac{2^{**}}{2}}\|\Delta u\|_{2}^{2^{**}}\\
&=&H_{T}(c,\|\Delta u\|_{2}),
\end{eqnarray*}
where $$H_{T}(c,m):=\frac{1}{2}m^{2}-\frac{\mu}{q}C_{N,q}^{q}c^{\frac{4q-(q-2)N}{8}}m_{2}^{\frac{(q-2)N}{4}}-
\frac{\eta(m)}{2^{**}}S^{-\frac{2^{**}}{2}}m_{2}^{2^{**}},  \quad m>0.$$
It is easy to see that $H_{T,c}(m):=H_{T}(c,m)$ has the following properties:
\begin{equation*}
\left\{
\begin{array}{lll}
H_{T,c}(m)\equiv H_{c},~ \mbox{for\ every}\ m\in(0,R_{0}],\\
H_{T,c}(m)~ \mbox{is\ positive\ and\ strictly\ increasing\ on}\ (R_{0},+\infty).
\end{array}
\right.
\end{equation*}
So, we can choose sufficiently small $R_{0}>0$ such that
\begin{equation}\label{e2.3}
\frac{1}{2}m_{1}^{2}-\frac{1}{2^{**}}S^{-\frac{2^{**}}{2}}m_{1}^{2^{**}}\geq0,\quad
\mbox{for\ every}\ m_{1}\in[0,R_{0}]~\mbox{and}\ R_{0}<S^{\frac{N}{4}}.
\end{equation}
Correspondingly, for every $V_{1}\in[0,|V|_{\infty}]$, we denote by $\Upsilon_{V_{1}}$,
$\Upsilon_{V_{1},T}:H^{2}(\mathbb{R}^{N})\rightarrow\mathbb{R}$ the following functionals:
\begin{equation*}
\Upsilon_{V_{1}}=\frac{1}{2}\|\Delta u\|_{2}^{2}+\frac{V_{1}}{2}\int_{\mathbb{R}^{N}}
|u|^{2}dx-\frac{\mu}{q}\int_{\mathbb{R}^{N}}|u|^{q}-\frac{1}{2^{**}}\int_{\mathbb{R}^{N}}|u|^{2^{**}}dx
\end{equation*}
and
\begin{equation*}
\Upsilon_{V_{1},T}(u)=\frac{1}{2}\|\Delta u\|_{2}^{2}+\frac{V_{1}}{2}\int_{\mathbb{R}^{N}}
|u|^{2}dx-\frac{\mu}{q}\int_{\mathbb{R}^{N}}|u|^{q}-\frac{\eta\|\Delta u\|_{2}}{2^{**}}\int_{\mathbb{R}^{N}}|u|^{2^{**}}dx.
\end{equation*}

\section{Energy Functional $\Upsilon_{V_{1},T}$}\label{S3}

In this section, we will establish the  properties of the functional
$\Upsilon_{V_{1},T}$ restricted on $S_{c}$.
\begin{lemma}\label{lem3.1}
The energy functional $\Upsilon_{V_{1},T}$ is bounded from below on
$S_{c}$ and coercive.
\end{lemma}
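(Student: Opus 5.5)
The plan is to compare $\Upsilon_{V_{1},T}$ on $S_{c}$ with the one–variable function $H_{T,c}$, exactly as was done for $\Upsilon_{\varepsilon,T}$ at the end of Section \ref{S2}. Fix $u\in S_{c}$ and write $m=\|\Delta u\|_{2}$. Since $V_{1}\geq0$, the potential term $\frac{V_{1}}{2}\int_{\mathbb{R}^{N}}|u|^{2}dx=\frac{V_{1}}{2}c^{2}$ is nonnegative and may be dropped for a lower bound. The subcritical term is bounded by the Gagliardo–Nirenberg inequality of Lemma \ref{lem2.1} with $m$ replaced by $q$:
$$\int_{\mathbb{R}^{N}}|u|^{q}dx\leq C_{N,q}^{q}c^{\frac{4q-(q-2)N}{8}}\|\Delta u\|_{2}^{\frac{(q-2)N}{4}}.$$
The critical term is bounded by the Sobolev constant \eqref{e2.1}:
$$\int_{\mathbb{R}^{N}}|u|^{2^{**}}dx\leq S^{-\frac{2^{**}}{2}}\|\Delta u\|_{2}^{2^{**}}.$$
Since $\eta\geq0$, combining these gives
$$\Upsilon_{V_{1},T}(u)\geq H_{T,c}(\|\Delta u\|_{2})\quad\mbox{for every } u\in S_{c}.$$

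Next I study $H_{T,c}$ on $(0,\infty)$, splitting into two ranges.

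On $(0,R_{0}]$ we have $\eta=1$, so $H_{T,c}=H_{c}$. By \eqref{e2.3}, $\frac12 m^{2}-\frac{1}{2^{**}}S^{-\frac{2^{**}}{2}}m^{2^{**}}\geq0$ there, hence
$$H_{T,c}(m)\geq-\frac{\mu}{q}C_{N,q}^{q}c^{\frac{4q-(q-2)N}{8}}R_{0}^{\frac{(q-2)N}{4}}>-\infty.$$

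On $[R_{0},\infty)$, the function $H_{T,c}$ is positive and strictly increasing (property stated before \eqref{e2.3}). Concretely, for $m\geq R_{1}$ we have $\eta(m)=0$, so
$$H_{T,c}(m)=\frac12 m^{2}-\frac{\mu}{q}C_{N,q}^{q}c^{\frac{4q-(q-2)N}{8}}m^{\frac{(q-2)N}{4}}.$$
Because $q<2+\frac{8}{N}$ gives $\frac{(q-2)N}{4}<2$, this tends to $+\infty$ as $m\to\infty$. On the compact interval $[R_{0},R_{1}]$, $H_{T,c}$ is continuous and hence bounded. Together with the first range, $\inf_{m>0}H_{T,c}(m)>-\infty$, so $\Upsilon_{V_{1},T}$ is bounded from below on $S_{c}$.

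For coercivity, take $u_{n}\in S_{c}$ with $\|u_{n}\|_{H^{2}}\to\infty$. Since $|u_{n}|_{2}=c$ is fixed, this forces $\|\Delta u_{n}\|_{2}\to\infty$. Then $\Upsilon_{V_{1},T}(u_{n})\geq H_{T,c}(\|\Delta u_{n}\|_{2})\to+\infty$ by the growth computed above.

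The only delicate point is the intermediate range $[R_{0},R_{1}]$, where $\eta$ switches off the critical term. Here I simply use continuity of $H_{T,c}$ (as $\eta$ is smooth) rather than any monotonicity claim. The genuinely essential input is the mass-subcritical condition $q<2+\frac8N$, which makes the quadratic term dominate at infinity once the truncation has removed the critical power.
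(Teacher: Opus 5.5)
Your proof is correct and follows the same route as the paper. The paper simply asserts $\Upsilon_{V_{1},T}(u)\geq H_{T,c}(\|\Delta u\|_{2})\geq\inf_{m>0}H_{T,c}(m)>-\infty$, together with $\Upsilon_{V_{1},T}(u)\to\infty$ as $\|\Delta u\|_{2}\to\infty$; you supply the details it leaves implicit (the truncation removes the critical term for $m\geq R_{1}$, $H_{T,c}$ is continuous on $[R_{0},R_{1}]$, and $\frac{(q-2)N}{4}<2$), and you rightly avoid depending on the paper's unproved claim that $H_{T,c}$ is strictly increasing on $(R_{0},\infty)$.
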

\begin{proof}
For each $u\in S_{c}$, we have the following  properties
of $H_{T,c}$:
$$\Upsilon_{V_{1},T}(u)\geq H_{T,c}(\|\Delta u\|_{2})\geq\inf\limits_{m>0}H_{T,c}(m)>-\infty.$$
Moreover, $\Upsilon_{V_{1},T}(u)\rightarrow\infty,$  as $\|\Delta u\|_{2}\rightarrow\infty$.
This completes the proof of Lemma \ref{lem3.1}.
\end{proof}
Next, we can define
\begin{equation}\label{e3.1}
\tilde{d}_{V_{1},T,c}=\inf_{u\in S_{c}}\Upsilon_{V_{1},T}(u).
\end{equation}
The following lemma yields an important property of $\tilde{d}_{V_{1},T,c}$.
\begin{lemma}\label{lem3.2}
For every $c\leq\bar{c}$, there exists $V_{*}>0$ such that $\tilde{d}_{V_{1},T,c}<0$
if $V_{1}<V_{*}$.
\end{lemma}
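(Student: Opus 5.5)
The strategy is to exhibit one test function $u\in S_c$ with small $\|\Delta u\|_2$ (so that $\eta\equiv1$ and the truncation plays no role) on which the mass-subcritical term dominates. Then we observe that adding the potential term costs at most $\frac{V_1}{2}c^2$, which is harmless once $V_1$ is small. Concretely, I would fix $u_0\in S_c$ and use the $L^2$-preserving dilation
$$u_t(x)=t^{N/2}u_0(tx),\qquad t>0,$$
for which $|u_t|_2=c$, $\|\Delta u_t\|_2^2=t^4\|\Delta u_0\|_2^2$, $\int|u_t|^q=t^{\frac{N(q-2)}{2}}\int|u_0|^q$ and $\int|u_t|^{2^{**}}=t^{\frac{2^{**}N}{2}-N}\int|u_0|^{2^{**}}=t^{2\cdot 2^{**}}\int|u_0|^{2^{**}}$.

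First step: choose $t$ small enough that $\|\Delta u_t\|_2\le R_0$, so that $\eta(\|\Delta u_t\|_2)=1$. Dropping the nonpositive critical contribution (it only lowers the energy), we get
$$\Upsilon_{V_1,T}(u_t)\le \frac{t^4}{2}\|\Delta u_0\|_2^2-\frac{\mu}{q}t^{\frac{N(q-2)}{2}}\int|u_0|^q+\frac{V_1}{2}c^2 .$$
Since $q<2+\frac8N$ we have $\frac{N(q-2)}{2}<4$. Hence, for $t$ sufficiently small (depending only on $c$, $u_0$, $\mu$, $q$, $N$), the first two terms sum to a strictly negative number. Call it $-\delta_c<0$; this defines $t_c$.

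Second step: set $V_*=\delta_c/c^2$. Then $V_1<V_*$ gives $\Upsilon_{V_1,T}(u_{t_c})<-\delta_c/2<0$, and hence $\tilde d_{V_1,T,c}\le\Upsilon_{V_1,T}(u_{t_c})<0$.

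The only subtle point, which I expect to be the main obstacle, is whether $V_*$ must be uniform in $c\le\bar c$, since the statement allows it to be read either way. If uniformity is required, I would argue as follows. Take $u_0$ of unit mass and scale it as $c\,u_0$. The negative part is then $\frac{t^4c^2}{2}A-\frac{\mu}{q}c^qt^{N(q-2)/2}B$, while the potential cost is $\frac{V_1c^2}{2}$. Optimizing in $t$ yields a negativity margin of order $c^{2}\,\cdot c^{\beta}$ for some $\beta>0$, because $q>2$. This margin degenerates as $c\to0$, so a uniform $V_*$ seems unavailable for all small $c$. I would therefore state and prove the lemma with $V_*=V_*(c)$, possibly taking the minimum over a compact range $c\in[c_0,\bar c]$ if a uniform constant is needed later. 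One also has to check that the chosen $t$ still satisfies $\|\Delta u_t\|_2\le R_0$; this is automatic, since decreasing $t$ only helps both requirements.
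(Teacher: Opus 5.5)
Your argument is correct and is essentially the paper's proof. The paper uses the same $L^2$-preserving dilation $\mathcal{H}(u,\alpha)(x)=e^{N\alpha/2}u(e^{\alpha}x)$ (your $t=e^{\alpha}$), drops the nonpositive critical term, uses $q<2+\frac{8}{N}$ to find $\alpha<0$ with $B_\alpha<0$, and sets $V_*=-B_\alpha/\bar{c}$. Your check that $\eta=1$ is harmless but unnecessary, since $\eta\ge 0$.

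Your caveat about uniformity is also well founded. The paper's $V_*$ is built from a fixed $u\in S_c$, so it depends on $c$ exactly as yours does. Moreover, $\Upsilon_{V_1,T}=\Upsilon_{0,T}+\frac{V_1}{2}c^2$ on $S_c$, while $\tilde{d}_{0,T,c}$ is only of order $-c^{2+\beta}$ by Gagliardo--Nirenberg. Hence no single $V_*>0$ can work for all small $c$.
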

\begin{proof}
Fixing $u\in S_{c}$, we set
$$\mathcal{H}(x,\alpha)(x)=\frac{N\alpha}{2}u(e^{\alpha}x),\quad \mbox{for\ every}\ \alpha\in\mathbb{R}.$$
A direct computation gives
\begin{align*}
&\int_{\mathbb{R}^{N}}|\mathcal{H}(u,\alpha)(x)|^{2}dx=c^{2},\\
&\int_{\mathbb{R}^{N}}|\mathcal{H}(u,\alpha)(x)|^{\varepsilon}dx=e^{\frac{N\alpha(\varepsilon-2)}{2}}
\int_{\mathbb{R}^{N}}|u|^{\varepsilon}dx,\quad \hbox{ for every }  \varepsilon\geq2,
\end{align*}
and
\begin{equation}\label{e3.3}
\int_{\mathbb{R}^{N}}|\Delta\mathcal{H}(u,\alpha)(x)|^{2}dx=e^{4\alpha}
\int_{\mathbb{R}^{N}}|\Delta u|^{2}dx,
\end{equation}
which leads to
\begin{eqnarray*}
\Upsilon_{V_{1},T}(u) &\geq&\nonumber
\frac{1}{2}\int_{\mathbb{R}^{N}}|\mathcal{H}(u,\alpha)(x)|^{2}dx+\frac{V_{1}\bar{c}}{2}-
\frac{\mu}{q}\int_{\mathbb{R}^{N}}|\mathcal{H}(u,\alpha)(x)|^{q}dx\\
&=&\frac{e^{4\alpha}}{2}\int_{\mathbb{R}^{N}}|\Delta u(x)|^{2}dx+\frac{V_{1}\bar{c}}{2}
-\frac{\mu e^{\alpha\gamma_{q}}}{q}\int_{\mathbb{R}^{N}}|u(x)|^{q}dx.
\end{eqnarray*}
Since $q<\frac{8+2N}{N}$, there exists $\beta<0$ such that
$$\frac{e^{4\alpha}}{2}\int_{\mathbb{R}^{N}}|\Delta u(x)|^{2}dx
-\frac{\mu e^{\alpha\gamma_{q}}}{q}\int_{\mathbb{R}^{N}}|u(x)|^{q}dx:=B_{\alpha}<0.$$
Therefore, setting $V_{1}<V_{*}:=-\frac{B_{\alpha}}{\bar{c}}$, it follows that
$$\Upsilon_{V_{1},T}(u)(\mathcal{H}(u,\alpha))<B_{\alpha}-\frac{B_{\alpha}}{2}=\frac{B_{\alpha}}{2}<0,$$
which implies that $\tilde{d}_{V_{1},T,c}(u)<0$.
This completes the proof of Lemma \ref{lem3.2}.
\end{proof}
Hereafter, we will always assume that $V_{1}<V_{*}$ holds. The proof of the
following lemma is standard -- see the proof of Li et al. \cite[Lemma 3.3]{xx}.
\begin{lemma}\label{lem3.3}
The energy functional $\Upsilon_{V_{1},T}(u)$ has the following properties:
 \begin{itemize}
        \item[$(i)$] $\Upsilon_{V_{1},T}\in C^{1}(H^{2}(\mathbb{R}^{N}),\mathbb{R})$.
   \item[$(ii)$]  If $\Upsilon_{V_{1},T}\leq0$, then $\|\Delta u\|_{2}<R_{0}$. Moreover,
 $\Upsilon_{V_{1},T}(v)=\Upsilon_{V_{1}}(v)$, for every  $v$ in a sufficiently small
neighborhood of $u$ in $H^{2}(\mathbb{R}^{N})$.
    \end{itemize}
\end{lemma}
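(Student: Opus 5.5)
For $(i)$ I would treat $\Upsilon_{V_{1},T}$ as the untruncated part plus the product term. The map $u\mapsto \frac12\|\Delta u\|_2^2+\frac{V_1}{2}|u|_2^2-\frac{\mu}{q}|u|_q^q$ is $C^1$ on $H^{2}(\mathbb{R}^{N})$ by the standard argument: Lemma \ref{lem2.1} gives the continuous embeddings $H^2\hookrightarrow L^q$ and $H^2\hookrightarrow L^{2^{**}}$, and $t\mapsto|t|^{q}$ is $C^1$ with derivative of growth $|t|^{q-1}$. The remaining term is the product $\frac{1}{2^{**}}\eta(\|\Delta u\|_2)\,|u|_{2^{**}}^{2^{**}}$. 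Here $u\mapsto|u|^{2^{**}}_{2^{**}}$ is $C^1$ for the same reason. For the cutoff factor, $\eta$ is $C^\infty$ and constant ($=1$) on $[0,R_0]$. The map $u\mapsto\|\Delta u\|_2$ is $C^1$ away from $\{\Delta u=0\}=\{0\}$, and on the ball $\|\Delta u\|_2<R_0$ the composition $\eta(\|\Delta u\|_2)$ is identically $1$. So $u\mapsto \eta(\|\Delta u\|_2)$ is $C^1$ everywhere, and the product rule gives $(i)$. Its derivative is
\[
\eta'(\|\Delta u\|_2)\,\|\Delta u\|_2^{-1}\int_{\mathbb{R}^N}\Delta u\,\Delta v\,dx .
\]

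For $(ii)$ I would use the lower bound already established before Lemma \ref{lem3.1}. For $u\in S_c$ it reads $\Upsilon_{V_1,T}(u)\ge H_{T,c}(\|\Delta u\|_2)$; the potential term is nonnegative since $V_1\ge0$, so it can be dropped. By the listed properties, $H_{T,c}$ is strictly positive on $(R_0,\infty)$. By continuity it is also nonnegative, and strictly positive, at $R_0$: $R_0$ was chosen as a zero of $H_c$ but sits in the increasing region, and positivity just above $R_0$ is what is needed. Hence $\Upsilon_{V_1,T}(u)\le 0$ forces $\|\Delta u\|_2<R_0$. I expect the main obstacle to be this boundary case $\|\Delta u\|_2=R_0$, where $H_{T,c}(R_0)=H_c(R_0)=0$. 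Non-strict inequality there would only give $\le R_0$. I would handle it in one of two ways. The first is to restrict attention to strictly negative values, which is all that is used later since $\tilde d_{V_1,T,c}<0$ by Lemma \ref{lem3.2}. The alternative is to note that the Gagliardo--Nirenberg and Sobolev inequalities cannot both be equalities for the same $u$, because the Sobolev inequality has no extremal in $H^2$ at fixed $L^2$-norm combined with GN optimality. That makes the inequality strict and gives $\Upsilon_{V_1,T}(u)>0$ at $\|\Delta u\|_2=R_0$.

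Finally, for the neighborhood statement: once $\|\Delta u\|_2<R_0$, continuity of $v\mapsto\|\Delta v\|_2$ in $H^2$ gives a ball $B_\delta(u)$ on which $\|\Delta v\|_2<R_0$. On that ball $\eta(\|\Delta v\|_2)=1$, so $\Upsilon_{V_1,T}(v)=\Upsilon_{V_1}(v)$ identically. This also shows that the derivatives coincide there, which is what will later let constrained critical points of the truncated functional at negative levels be critical points of $\Upsilon_{V_1}$.
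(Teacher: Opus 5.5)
Your route is the standard one the paper relies on; the paper gives no proof of its own, only a pointer to Li et al. The steps are: $C^1$-regularity because $u\mapsto\eta(\|\Delta u\|_2)$ is constant on $\{\|\Delta u\|_2<R_0\}$ and a composition of $C^1$ maps on $\{u\neq0\}$; then the bound $\Upsilon_{V_1,T}(u)\ge H_{T,c}(\|\Delta u\|_2)$ on $S_c$; then continuity of $v\mapsto\|\Delta v\|_2$. Part (i) and the neighbourhood statement are fine. You are also right to work on $S_c$ with $c\le\bar c$, since without the mass constraint (ii) fails.

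The one step that does not hold together as written is the boundary case $\|\Delta u\|_2=R_0$, which the cited argument also passes over.

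\emph{The lower bound alone.} Your sentence claiming that $H_{T,c}$ is strictly positive at $R_0$ contradicts $H_{T,c}(R_0)=H_c(R_0)=0$. So the lower bound by itself only gives $\|\Delta u\|_2\le R_0$.

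\emph{Your first remedy.} It proves only the version with $\Upsilon_{V_1,T}(u)<0$. That suffices for every later use: Lemma \ref{lem3.10} directly, and Lemma \ref{lem4.4} and Proposition \ref{pro4.1} through the analogue Lemma \ref{lem4.2}, all at negative levels. But it is weaker than the lemma as stated.

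\emph{Your second remedy.} The conclusion is true, but your stated reason is not. The biharmonic Sobolev extremals are $\pm t\,U$ with $U(x)=(1+|x|^2)^{-(N-4)/2}$, up to translation and dilation. They lie in $L^2(\mathbb{R}^N)$, hence in $H^2(\mathbb{R}^N)$, exactly when $N\ge9$. In those dimensions you must genuinely rule out a single $u$ being extremal for both inequalities. Here is a short way to do it:
\begin{itemize}
\item[$\cdot$] Equality in the sharp Gagliardo--Nirenberg inequality makes $u$ a maximizer of $|u|_q^q/(|u|_2^{q(1-\gamma_q)}\|\Delta u\|_2^{q\gamma_q})$. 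Hence $\alpha\Delta^2u+\beta u=|u|^{q-2}u$ for some $\alpha,\beta>0$. If $C_{N,q}$ is not sharp, equality is impossible outright.
\item[$\cdot$] If also $u=tU$ with $\Delta^2U=\kappa U^{2^{**}-1}$, dividing by $tU$ gives $\alpha\kappa U^{2^{**}-2}+\beta=|t|^{q-2}U^{q-2}$ pointwise.
\item[$\cdot$] This fails as $|x|\to\infty$, since $U\to0$ makes the left side tend to $\beta>0$ and the right side to $0$.
\end{itemize}

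\emph{Cases where strictness is automatic.} For $5\le N\le8$ no Sobolev extremal lies in $L^2$. For $V_1>0$ the term $\frac{V_1}{2}|u|_2^2>0$ that you dropped already gives strictness. Finally, if $R_0$ is the zero of $H_{\bar c}$, as the paragraph before Lemma \ref{lem3.7} suggests, then for $c<\bar c$ you have $H_{T,c}(R_0)=R_0^2F(c,R_0)>0$, and the issue does not arise at all.
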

Next, we recall
$$H_{T}(c,m)=\frac{1}{2}m^{2}-\frac{\mu}{q}C_{N,q}^{q}c^{\frac{4q-(q-2)N}{8}}m^{\frac{(q-2)N}{4}}-
\frac{\eta(m)}{2^{**}}S^{-\frac{2^{**}}{2}}m^{2^{**}},\quad t>0, $$
and define
$$F_{T}(c,m):=\frac{1}{2}-\frac{\mu}{q}C_{N,q}^{q}c^{\frac{4q-(q-2)N}{8}}m^{\frac{(q-2)N-8}{4}}-
\frac{\eta(m)}{2^{**}}S^{-\frac{2^{**}}{2}}m^{2^{**}-2}.$$
Consider $F_{T,c}(m)$, which is defined on $(0,\infty)$ by
$m\mapsto F_{T}(a,m)$,
and let
$$F(c,m)=\frac{1}{2}-\frac{\mu}{q}C_{N,q}^{q}c^{\frac{4q-(q-2)N}{8}}m^{\frac{(q-2)N-8}{4}}-
\frac{1}{2^{**}}S^{-\frac{2^{**}}{2}}m^{2^{**}-2}.$$
\begin{lemma}\label{lem3.4}
Suppose that $(c_{2},m_{2})\in(0,\infty)\times(0,\infty)$ satisfies $F(c_{2},t_{2})\geq0$.
Then, for every  $c_{1}\in(0,c_{2}]$, we have that
$$F_{T}(c_{1},m_{1})\geq0 \quad \mbox{if}\ m_{1}\in\left[\left(\frac{c_{1}}{c_{2}}\right)^{\frac{1}{2}}t_{2},t_{2}\right].$$
\end{lemma}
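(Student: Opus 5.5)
Throughout, $t_{2}$ is read as $m_{2}$. The plan is to reduce the claim to a monotonicity statement about the untruncated function $F$, and then compare the two remaining terms of $F$ separately.

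\textbf{Reduction to $F$.} Since $0\leq\eta\leq 1$ and the critical term enters $F_{T}$ with a minus sign, we have $F_{T}(c,m)\geq F(c,m)$ for all $c,m>0$. So it suffices to prove $F(c_{1},m_{1})\geq 0$ for every $m_{1}\in[(c_{1}/c_{2})^{1/2}m_{2},m_{2}]$.

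\textbf{Notation.} Write
$$F(c,m)=\tfrac12-A\,c^{a}m^{b}-B\,m^{2^{**}-2},$$
with $A=\frac{\mu}{q}C_{N,q}^{q}$, $B=\frac{1}{2^{**}}S^{-\frac{2^{**}}{2}}$, $a=\frac{4q-(q-2)N}{8}$ and $b=\frac{(q-2)N-8}{4}$. Because $2<q<2+\frac8N$, we have $b<0$. Also $a>0$, since $4q-(q-2)N>4q-8>0$. The goal is to show
$$c_{1}^{a}m_{1}^{b}\leq c_{2}^{a}m_{2}^{b}\quad\text{and}\quad m_{1}^{2^{**}-2}\leq m_{2}^{2^{**}-2}.$$
Together these give $F(c_{1},m_{1})\geq F(c_{2},m_{2})\geq 0$.

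\textbf{The critical term.} This is immediate from $m_{1}\leq m_{2}$ and $2^{**}-2>0$.

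\textbf{The $q$-term.} This is the only real step. Put $s=m_{1}/m_{2}\in[(c_{1}/c_{2})^{1/2},1]$ and $r=c_{1}/c_{2}\in(0,1]$. The desired inequality becomes $r^{a}\leq s^{|b|}$. Since $s\geq r^{1/2}$, we have $s^{|b|}\geq r^{|b|/2}$. Because $r\leq 1$, it therefore suffices to check $a\geq |b|/2$. A direct computation gives
$$a-\frac{|b|}{2}=\frac{4q-(q-2)N}{8}-\frac{8-(q-2)N}{8}=\frac{4q-8}{8}>0,$$
and the inequality follows.

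The main potential obstacle is only bookkeeping. One must make sure the exponent of $c$ in $F$ is exactly the one coming from the Gagliardo--Nirenberg inequality in Lemma \ref{lem2.1}, with the constraint written as $|u|_{2}=c$. If the intended normalization were $|u|_{2}^{2}=c$, the power of $c$ would halve and the ratio $(c_{1}/c_{2})^{1/2}$ in the statement would change accordingly. I would fix the convention consistent with the definition of $H$ and then verify that the exponent inequality $a\geq |b|/2$ holds in that convention, exactly as above.
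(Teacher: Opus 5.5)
Your proof is correct, and it differs from the paper's in how it covers the interior of the interval. The paper checks nonnegativity only at the two endpoints. At $m_1=m_2$ it uses that $c\mapsto F_T(c,m)$ is non-increasing. At $m_1=(c_1/c_2)^{1/2}m_2$ it uses $F_T\ge F$ and the exponent identity $a+\tfrac b2=\tfrac{q-2}{2}>0$, which is the same computation as your check $a\ge|b|/2$. It then fills in the rest of the interval with the phrase ``by definition of $\eta$''. That step implicitly relies on a shape property: for the untruncated $F(c_1,\cdot)$, unimodality makes its nonnegativity set an interval, and $F_T\ge F$ then finishes.

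You instead bound each term separately by its worst case over the interval. The $q$-term, with $b<0$, is worst at the left endpoint, and the critical term is worst at the right endpoint. This gives $F(c_1,m_1)\ge F(c_2,m_2)$ pointwise for every admissible $m_1$. Your route is self-contained and needs no shape analysis of $F$ or of the truncated $F_T$. The paper's route needs the unimodality step made explicit to be complete.

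One small correction to your closing remark, which does not affect the proof: the conventions are reversed. The exponent $\frac{4q-(q-2)N}{8}$ is what Gagliardo--Nirenberg gives when $|u|_2^2=c$. This matches the scaling $\varsigma=(c_2/c_1)^{1/2}$ used in Lemma~\ref{lem3.7}. Under $|u|_2=c$ the exponent would be twice as large. Your argument still goes through in that case, since $a>0$ gives $2a\ge a\ge |b|/2$.
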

\begin{proof}
Since $c\mapsto F_{T}(c,m)$ is a non-increasing function, it is easy to obtain that
$$F_{T}(c_{1},m_{1})\geq F_{T}(c_{2},m_{2})\geq F(c_{2},m_{2})\geq0.$$
By a direct calculation, we can get that
\begin{align*}
F_{T}(c_{1},(\frac{c_{1}}{c_{2}})^{\frac{1}{2}}m_{2}) 
\geq&
F(c_{1},(\frac{c_{1}}{c_{2}})^{\frac{1}{2}}m_{2})\\
=& \frac{1}{2}-\frac{\mu}{q}C_{N,q}^{q}(\frac{c_{1}}{c_{2}})^{\frac{q-2}{2}}c_{2}^{\frac{4q-(q-2)N}{8}}m_{2}^{\frac{(q-2)N-8}{4}}\\
&-\frac{1}{2^{**}}S^{-\frac{2^{**}}{2}}(\frac{c_{1}}{c_{2}})^{\frac{2^{**}-2}{2}}m_{2}^{2^{**}-2}\\
\geq&
\frac{1}{2}-\frac{\mu}{q}C_{N,q}^{q}c_{2}^{\frac{4q-(q-2)N}{8}}m_{2}^{\frac{(q-2)N-8}{4}}-
\frac{1}{2^{**}}S^{-\frac{2^{**}}{2}}m_{2}^{2^{**}-2}\\
=& F(c_{2},m_{2})\geq0.
\end{align*}
Hence,
\begin{equation*}
F_{T}(c_{1},(\frac{c_{1}}{c_{2}})m_{2})\geq0 \quad \mbox{and}\quad F_{T}(c_{1},m_{2})\geq0,
\end{equation*}
so, by definition of $\eta$, we get $$F_{T}(c_{1},m_{1})\geq0,  \quad \mbox{for every}\  m_{1}\in\left[\left(\frac{c_{1}}{c_{2}}\right)^{\frac{1}{2}}m_{2},m_{2}\right].$$
This completes the proof of Lemma \ref{lem3.4}.
\end{proof}

\begin{lemma}\label{lem3.5}
For every $u\in S_{c}$, the following holds:
$$\Upsilon_{V_{1},T}(u)\geq\|\Delta u\|_{2}^{2}F_{t}(c,\|\Delta u\|_{2}).$$
\end{lemma}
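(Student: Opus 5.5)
The inequality is meant to be read with $F_{t}$ standing for $F_{T}$. The key observation is that $H_{T}(c,m)=m^{2}F_{T}(c,m)$, which follows directly from the definitions: factor $m^{2}$ out of each term of $H_{T}(c,m)$. Indeed,
$$\frac{1}{2}m^{2}=m^{2}\cdot\frac12,\qquad m^{\frac{(q-2)N}{4}}=m^{2}\,m^{\frac{(q-2)N-8}{4}},\qquad m^{2^{**}}=m^{2}\,m^{2^{**}-2}.$$
The prefactors $\frac{\mu}{q}C_{N,q}^{q}c^{\frac{4q-(q-2)N}{8}}$ and $\frac{\eta(m)}{2^{**}}S^{-\frac{2^{**}}{2}}$ match term by term. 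So it suffices to prove $\Upsilon_{V_{1},T}(u)\geq H_{T}(c,\|\Delta u\|_{2})$ for every $u\in S_{c}$, and then substitute $m=\|\Delta u\|_{2}$.

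For that lower bound I would proceed exactly as in the estimate for $\Upsilon_{\varepsilon,T}$ in Section \ref{S2}.

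First, since $V_{1}\geq 0$, the potential term $\frac{V_{1}}{2}\int|u|^{2}dx$ is nonnegative and can be dropped.

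Next, the Gagliardo--Nirenberg inequality (Lemma \ref{lem2.1}) with $m=q$ gives
$$\int|u|^{q}\leq C_{N,q}^{q}|u|_{2}^{q(1-\gamma_{q})}\|\Delta u\|_{2}^{q\gamma_{q}}.$$
Here $q\gamma_{q}=\frac{N(q-2)}{4}$, and with $|u|_{2}$ fixed by the constraint the mass factor becomes $c^{\frac{4q-(q-2)N}{8}}$, following the paper's normalization convention for $S_{c}$.

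Then, using the Sobolev constant from \eqref{e2.1}, the critical term satisfies
$$\int|u|^{2^{**}}\leq S^{-\frac{2^{**}}{2}}\|\Delta u\|_{2}^{2^{**}}.$$
Since $\eta\geq0$, multiplying this inequality by $\eta(\|\Delta u\|_{2})/2^{**}$ preserves its direction.

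Combining these three steps yields $\Upsilon_{V_{1},T}(u)\geq H_{T}(c,\|\Delta u\|_{2})=\|\Delta u\|_{2}^{2}F_{T}(c,\|\Delta u\|_{2})$.

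The only delicate point is bookkeeping: the exponent of $c$ coming from Gagliardo--Nirenberg has to agree with the one used in the definition of $F_{T}$. This amounts to checking that $q(1-\gamma_{q})$ applied to the paper's mass normalization produces $\frac{4q-(q-2)N}{8}$. I would verify this once and adopt the same convention as in the preceding estimates. No compactness or variational argument is needed.
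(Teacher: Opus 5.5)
Your proposal is correct and is essentially the paper's proof: drop the nonnegative term $\frac{V_1}{2}|u|_2^2$, bound the $q$-term by Gagliardo--Nirenberg and the critical term by \eqref{e2.1} (using $\eta\ge 0$), then factor out $\|\Delta u\|_2^2$ to obtain $F_T$. Your bookkeeping caveat is apt: the exponent $\frac{4q-(q-2)N}{8}$ comes out exactly when the constraint is read as $|u|_2^2=c$, which is the convention the paper actually uses (for example, the scaling $\varsigma=(c_2/c_1)^{1/2}$ in Lemma \ref{lem3.7}); the literal definition $|u|_2=c$ would instead give $\frac{4q-(q-2)N}{4}$.
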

\begin{proof}
Applying  Lemma \ref{lem2.1}, we obtain that
\begin{eqnarray*}
\Upsilon_{V_{1},T}(u) &\geq&\nonumber
\frac{1}{2}\|\Delta u\|_{2}^{2}-\frac{\mu}{q}C_{N,q}^{q}c^{\frac{4q-(q-2)N}{8}}
\|\Delta u\|_{2}^{\frac{(q-2)N}{4}}-\frac{\eta(\|\Delta u\|_{2})}{2^{**}}S^{-\frac{2^{**}}{2}}\|\Delta u\|_{2^{**}}^{2^{**}}\\
&=&\|\Delta u\|_{2}^{2}\left[\frac{1}{2}-\frac{\mu}{q}C_{N,q}^{q}c^{\frac{4q-(q-2)N}{8}}
\|\Delta u\|_{2}^{\frac{(q-2)N-8}{4}}-\frac{\eta(\|\Delta u\|_{2})}{2^{**}}S^{-\frac{2^{**}}{2}}\|\Delta u\|_{2^{**}}^{2^{**}-2}\right]\\
&=&\|\Delta u\|_{2}^{2}F_{T}(c,\|\Delta u\|_{2}).
\end{eqnarray*}
This completes the proof of Lemma \ref{lem3.5}.
\end{proof}
The proof of the next lemma is standard---see e.g.,  the proof of Hu and Mao \cite[Lemma 2.3]{hm}, so we will omit it. Recall that
 $\tilde{d}_{V_{1},T,c}$ was defined in \eqref{e3.1}.
\begin{lemma}\label{lem3.6}
$\tilde{d}_{V_{1},T,c}$ is continuous with regard to $c\in(0,\bar{c}]$.
\end{lemma}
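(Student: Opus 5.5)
We prove continuity by a scaling argument: compare $\tilde{d}_{V_{1},T,c}$ with $\tilde{d}_{V_{1},T,c_{n}}$ for $c_{n}\to c\in(0,\bar{c}]$ by pushing functions from one sphere to the other via $u\mapsto \frac{c_{n}}{c}u$. It suffices to show
$$\limsup_{n}\tilde{d}_{V_{1},T,c_{n}}\leq \tilde{d}_{V_{1},T,c}\quad\mbox{and}\quad \liminf_{n}\tilde{d}_{V_{1},T,c_{n}}\geq \tilde{d}_{V_{1},T,c}.$$

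\emph{Upper bound.} Fix $\delta>0$ and pick $u\in S_{c}$ with $\Upsilon_{V_{1},T}(u)<\tilde{d}_{V_{1},T,c}+\delta$. Set $t_{n}=c_{n}/c\to1$, so that $t_{n}u\in S_{c_{n}}$. Each term of $\Upsilon_{V_{1},T}(t_{n}u)$ converges to the corresponding term of $\Upsilon_{V_{1},T}(u)$:
\begin{itemize}
\item the quadratic terms scale by $t_{n}^{2}$;
\item the $L^{q}$ and $L^{2^{**}}$ terms scale by $t_{n}^{q}$ and $t_{n}^{2^{**}}$;
\item $\eta(t_{n}\|\Delta u\|_{2})\to\eta(\|\Delta u\|_{2})$, because $\eta$ is continuous.
\end{itemize}
Hence $\tilde{d}_{V_{1},T,c_{n}}\le \Upsilon_{V_{1},T}(t_{n}u)\to \Upsilon_{V_{1},T}(u)$, and letting $\delta\to0$ gives the $\limsup$ inequality.

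\emph{Lower bound.} For each $n$ choose $u_{n}\in S_{c_{n}}$ with $\Upsilon_{V_{1},T}(u_{n})<\tilde{d}_{V_{1},T,c_{n}}+\frac1n$. By the upper bound, the values $\Upsilon_{V_{1},T}(u_{n})$ are bounded above.

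The main point, which I expect to be the only delicate step, is showing that $\|\Delta u_{n}\|_{2}$ is bounded uniformly in $n$. Lemma \ref{lem3.1} gives coercivity only for a fixed mass, so we argue directly from Lemma \ref{lem3.5}: $\Upsilon_{V_{1},T}(u_{n})\ge H_{T}(c_{n},\|\Delta u_{n}\|_{2})$. Since $\eta\equiv0$ for $m\ge R_{1}$, we have for such $m$
$$H_{T}(c_{n},m)=\tfrac12 m^{2}-C c_{n}^{\frac{4q-(q-2)N}{8}}m^{\frac{(q-2)N}{4}}.$$
The exponent satisfies $\frac{(q-2)N}{4}<2$, and $c_{n}\le 2\bar{c}$ for large $n$. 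Therefore $H_{T}(c_{n},m)\to\infty$ as $m\to\infty$, uniformly in $n$, and this forces $\sup_{n}\|\Delta u_{n}\|_{2}<\infty$.

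Consequently $v_{n}:=\frac{c}{c_{n}}u_{n}\in S_{c}$ satisfies $\Upsilon_{V_{1},T}(v_{n})-\Upsilon_{V_{1},T}(u_{n})\to0$. This holds because $\|u_{n}\|_{H^{2}}$ is bounded (the $L^{2}$ part equals $c_{n}$). By Lemma \ref{lem2.1} the $L^{q}$ and $L^{2^{**}}$ norms of $u_{n}$ are then bounded. Each term therefore differs by a factor $(c/c_{n})^{p}-1\to0$ times a bounded quantity. For the truncation term we also use that $\eta$ is uniformly continuous on bounded sets.

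Thus $\tilde{d}_{V_{1},T,c}\le\Upsilon_{V_{1},T}(v_{n})=\Upsilon_{V_{1},T}(u_{n})+o(1)\le \tilde{d}_{V_{1},T,c_{n}}+o(1)$, which gives the $\liminf$ inequality and completes the proof.
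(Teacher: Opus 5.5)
Your proposal is correct. It is essentially the standard argument that the paper relies on: the paper omits the proof and cites Hu and Mao, Lemma 2.3, which transfers near-minimizers between $S_{c}$ and $S_{c_{n}}$ by a scalar factor tending to $1$. As in your proof, the error is controlled through a uniform bound on $\|\Delta u_{n}\|_{2}$, which you correctly obtain from $H_{T}$ together with $\eta\equiv 0$ on $[R_{1},\infty)$ and the subcritical exponent $\frac{(q-2)N}{4}<2$.
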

For every $c<\bar{c}$, in view of $H(\bar{c},m)=mF(\bar{c},m)$, it follows that
$F(\bar{c},R_{0})=0$. Moreover, $c\rightarrow F(c,m)$ is a non-increasing function,
so
it follows that $F(c,R_{0})\geq0$, for every  $c\in(0,\bar{c}]$.

\begin{lemma}\label{lem3.7}
We have $\frac{c_{1}}{c_{2}}\tilde{d}_{V_{1},T,c_{2}}<\tilde{d}_{V_{1},T,c_{1}}<0$,
where $0<c_{1}<c_{2}\leq\bar{c}$.
\end{lemma}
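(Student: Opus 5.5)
The plan is the usual scaling argument for strict subadditivity of the minimization level with respect to the mass. The paper's notation is a little ambiguous about whether $c$ or $c^{2}$ is the mass. I will work with the ratio $\theta := c_{2}/c_{1}>1$ and the multiplicative scaling $u\mapsto \theta^{1/2}u$, which maps $S_{c_{1}}$ to $S_{c_{2}}$ when the constraint is $|u|_2^2=c$. With mass $c^2$ one uses $\theta^{1/2}=c_2/c_1$ instead; only the exponents change.

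\textbf{Step 1.} Fix $\epsilon>0$ small. Take $u\in S_{c_{1}}$ with $\Upsilon_{V_{1},T}(u)\le \tilde d_{V_{1},T,c_{1}}+\epsilon<0$, which is possible since $\tilde d_{V_1,T,c_1}<0$ by Lemma \ref{lem3.2}. By Lemma \ref{lem3.3}(ii), $\Upsilon_{V_1,T}(u)<0$ gives $\|\Delta u\|_2<R_0$, so $\eta(\|\Delta u\|_2)=1$ and $\Upsilon_{V_1,T}(u)=\Upsilon_{V_1}(u)$.

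\textbf{Step 2.} Set $v=\theta^{1/2}u\in S_{c_2}$. The quadratic part scales exactly by $\theta$. The $q$-term and the critical term scale by $\theta^{q/2}>\theta$ and $\theta^{2^{**}/2}>\theta$. The truncation needs care, because $\|\Delta v\|_2=\theta^{1/2}\|\Delta u\|_2$ may exceed $R_0$, so $\eta(\|\Delta v\|_2)\le 1$. To avoid this, I would first run the argument with $c_2$ and $c_1$ swapped. Take a near-minimizer $v\in S_{c_2}$ of the level $\tilde d_{V_1,T,c_2}$, so $\|\Delta v\|_2<R_0$, and scale down by $w=\theta^{-1/2}v\in S_{c_1}$. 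Then $\|\Delta w\|_2<R_0$ as well, the truncation is inactive for both, and
\begin{equation*}
\Upsilon_{V_1,T}(v)=\theta\Big[\tfrac12\|\Delta w\|_2^2+\tfrac{V_1}{2}|w|_2^2\Big]-\theta^{q/2}\tfrac{\mu}{q}|w|_q^q-\theta^{2^{**}/2}\tfrac{1}{2^{**}}|w|_{2^{**}}^{2^{**}}<\theta\,\Upsilon_{V_1,T}(w).
\end{equation*}
This gives $\tilde d_{V_1,T,c_2}\le \theta\,\tilde d_{V_1,T,c_1}$. The strictness survives the limit provided the $q$-norm of near-minimizers stays bounded away from zero. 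That holds because $\tilde d_{V_1,T,c_1}<0$, the quadratic part is nonnegative, and $|w|_{2^{**}}$ is controlled by $\|\Delta w\|_2<R_0$ together with \eqref{e2.3}: the inequality $\frac12 m^2-\frac{1}{2^{**}}S^{-2^{**}/2}m^{2^{**}}\ge0$ forces the $q$-term to be what makes the energy negative. So $\liminf|w_n|_q^q>0$, and the gap $(\theta^{q/2}-\theta)\frac{\mu}{q}|w_n|_q^q$ yields the strict inequality $\tilde d_{V_1,T,c_2}<\theta\,\tilde d_{V_1,T,c_1}$. Dividing by $\theta$ gives $\frac{c_1}{c_2}\tilde d_{V_1,T,c_2}<\tilde d_{V_1,T,c_1}$. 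The right-hand bound $\tilde d_{V_1,T,c_1}<0$ is Lemma \ref{lem3.2}.

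\textbf{Main obstacle.} The hard part is the interplay between scaling and the cutoff $\eta$. Scaling down the $c_2$-minimizer is the direction that keeps $\|\Delta\cdot\|_2$ below $R_0$. The other essential point is the uniform lower bound on $|w_n|_q$, which is what makes the inequality strict rather than merely non-strict. If needed, Lemma \ref{lem3.4} can replace the direct estimate to confirm that near-minimizers lie in the region where $F_T\ge 0$ fails, i.e. in $(0,R_0)$.
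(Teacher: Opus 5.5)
\textbf{There is a genuine gap: the swap in Step 2 reverses the logic and proves nothing.} Take $v\in S_{c_2}$ a near-minimizer and $w=\theta^{-1/2}v\in S_{c_1}$. The inequality $\Upsilon_{V_1,T}(v)<\theta\,\Upsilon_{V_1,T}(w)$ is correct. But the only thing you know about $w$ is $\Upsilon_{V_1,T}(w)\ge\tilde d_{V_1,T,c_1}$, which is a \emph{lower} bound. So you get $\tilde d_{V_1,T,c_2}\le\Upsilon_{V_1,T}(v)<\theta\,\Upsilon_{V_1,T}(w)$, where $\Upsilon_{V_1,T}(w)$ may lie far above $\tilde d_{V_1,T,c_1}$, and no relation between the two levels follows. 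Your claimed $\tilde d_{V_1,T,c_2}\le\theta\,\tilde d_{V_1,T,c_1}$ would need $w$ to be a near-minimizer at mass $c_1$, and nothing guarantees that. To bound $\tilde d_{V_1,T,c_2}$ from above you need a competitor in $S_{c_2}$ whose energy is controlled by $\tilde d_{V_1,T,c_1}$. The only such competitor is a scaled-up $c_1$-near-minimizer, which is exactly the direction you abandoned.

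\textbf{The missing idea is a sharper a priori bound on negative-energy elements of $S_{c_1}$.} The truncation problem you flagged has to be solved, not avoided, and the paper does it as follows.
\begin{itemize}
\item Since $F_T(c_2,R_0)\ge F(c_2,R_0)\ge0$, Lemma~\ref{lem3.4} gives $F_T(c_1,m)\ge0$ for all $m\in[(c_1/c_2)^{1/2}R_0,R_0]$. The paper's displayed interval has the ratio inverted, but this is what is meant.
\item Lemma~\ref{lem3.5} gives $\Upsilon_{V_1,T}(u)\ge\|\Delta u\|_2^2F_T(c_1,\|\Delta u\|_2)$. So a minimizing sequence $u_n\subset S_{c_1}$ with $\Upsilon_{V_1,T}(u_n)<0$ cannot have $\|\Delta u_n\|_2$ in that interval.
\item Lemma~\ref{lem3.3}(ii) rules out $\|\Delta u_n\|_2\ge R_0$. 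Hence $\|\Delta u_n\|_2<(c_1/c_2)^{1/2}R_0$.
\end{itemize}
Now set $\varsigma=(c_2/c_1)^{1/2}$. Then $\varsigma u_n\in S_{c_2}$ satisfies $\|\Delta(\varsigma u_n)\|_2<R_0$, so the cutoff equals $1$ for both $u_n$ and $\varsigma u_n$. This gives $\tilde d_{V_1,T,c_2}\le\varsigma^2\Upsilon_{V_1,T}(u_n)-(\varsigma^q-\varsigma^2)\frac{\mu}{q}|u_n|_q^q-(\varsigma^{2^{**}}-\varsigma^2)\frac{1}{2^{**}}|u_n|_{2^{**}}^{2^{**}}$.

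Your lower bound on the $q$-norm is correct. It comes from $\Upsilon_{V_1,T}(u)\ge-\frac{\mu}{q}|u|_q^q$, via \eqref{e2.3} when $\|\Delta u\|_2\le R_0$. It is exactly what makes the inequality strict, and it is a more direct route than the paper's detour through the vanishing lemma. But it must be applied to this $c_1$-sequence.

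Your closing remark, that Lemma~\ref{lem3.4} places near-minimizers in $(0,R_0)$, is not enough. The whole point is the smaller threshold $(c_1/c_2)^{1/2}R_0$, which leaves room for the upward scaling.
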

\begin{proof}
Set $\varsigma=(\frac{c_{2}}{c_{1}})^{\frac{1}{2}}.$ Then $\varsigma>1$. Let
$\{u_{n}\}\subset S_{c_{1}}$ be a minimizing sequence with respect to $\tilde{d}_{V_{1},T,c_{1}}$,
that is, $\Upsilon_{V_{1},T}(u_{n})\rightarrow\tilde{d}_{V_{1},T,c_{1}}<0$, as $n\rightarrow+\infty$
by Lemma \ref{lem3.2}. Therefore, there exists $n_{0}$ such that
\begin{equation}\label{e3.5}
\Upsilon_{V_{1},T}(u_{n})<0, \quad  \mbox{for every}\ ~ n\geq n_{0}.
\end{equation}
In view of Lemma \ref{lem3.4} and $F_{T}(c_{2},R_{0})\geq0$, we have that  $F_{T}(c_{1},m)\geq0$,
for every  $m\in[(\frac{c_{2}}{c_{1}})^{\frac{1}{2}}R_{0},R_{0}]$.
Hence, we can deduce from \eqref{e3.5} and Lemma \ref{lem3.5} that
\begin{equation*}
\|\Delta u\|_{2}<\left(\frac{c_{2}}{c_{1}}\right)^{\frac{1}{2}}R_{0},  \quad  \mbox{for every}\ ~ n\geq n_{0}.
\end{equation*}
Setting $v_{n}=\varsigma u_{n}$, we get $v_{n}\in S_{c_{2}}$. By \eqref{e3.3},
one has $\|\Delta v_{n}\|_{2}=\varsigma\|\Delta u_{n}\|_{2}<R_{0}$.
Therefore,  $$\eta(\|\Delta u_{n}\|_{2})=\eta(\|\Delta v_{n}\|_{2})=1.$$
Through direct calculations, we find that
\begin{eqnarray*}
\tilde{d}_{V_{1}T,c_{2}} &\geq&\nonumber
\Upsilon_{V_{1},T}(v_{n})\\
&=& \varsigma^{2}\Upsilon_{V_{1},T}(u_{n})+\frac{\varsigma^{2}-\varsigma^{q}}{q}\mu|u_{n}|_{q}^{q}
+\frac{\eta(\|\Delta u_{n}\|_{2})\varsigma^{2}-\eta(\|\Delta v_{n}\|_{2})\varsigma\varsigma^{2^{**}}}{2^{**}}|u_{n}|_{2^{**}}^{2^{**}} \\
&=& \varsigma^{2}\Upsilon_{V_{1},T}(u_{n})+\frac{\varsigma^{2}-\varsigma^{q}}{q}\mu|u_{n}|_{q}^{q}
+\frac{\varsigma^{2}-\varsigma^{2^{**}}}{2^{**}}|u_{n}|_{2^{**}}^{2^{**}}.
\end{eqnarray*}
For each $m\in(2,\frac{2N}{N-4})$, there exist positive constants $C$ and $n_{0}$
such that $|u_{n}|_{m}^{m}\geq C$, for every $n\geq n_{0}$.
If not, there exists $m_{1}\in(2,\frac{2N}{N-4})$ such that $|u_{n}|_{m_{1}}^{m_{1}}\rightarrow0$
as $n\rightarrow+\infty$; then, by the vanishing lemma in Lions \cite{P.Lion1, P.Lion2}, $|u_{n}|_{q}^{q}\rightarrow0$
as $n\rightarrow+\infty$. By \eqref{e2.3}, it follows that
$$0>\tilde{d}(V_{1},T,c_{2})=\Upsilon_{V_{1},T}(u_{n})\geq-\frac{\mu}{q}|u_{n}|^{q}_{q}\rightarrow 0,\quad 
 \mbox{as}\ ~ n\rightarrow\infty,$$
which is a contradiction, and so, the assertion holds. We can deduce that, for sufficiently large $n\in\mathbb{N}$,
$$\tilde{d}_{V_{1},T,c_{2}}\leq\varsigma^{2}\Upsilon_{V_{1},T}(u_{n})+\frac{\varsigma^{2}-\varsigma^{q}C}{q}.$$
Apply Lemma \ref{lem3.6} and let $n\rightarrow+\infty$ to conclude that
$\tilde{d}_{V_{1},T,c_{2}}<\varsigma^{2}\tilde{d}_{V_{1},T,c_{1}}$,  which implies that
$$\frac{c_{1}}{c_{1}}{\tilde{d}_{V_{1},T,c_{2}}}<\tilde{d}_{V_{1},T,c_{1}}.$$
This completes the proof of Lemma \ref{lem3.7}.
\end{proof}

\begin{lemma}\label{lem3.8}
If $\hat{d}_{V_{1},T,c}<0$, then $u_{n}\rightarrow u$ in $L^{2^{**}}(\mathbb{R}^{N})$.
\end{lemma}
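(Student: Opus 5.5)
We read the statement as follows: $\{u_n\}\subset S_c$ is a minimizing sequence for the level $\hat d_{V_1,T,c}=\tilde d_{V_1,T,c}<0$ from \eqref{e3.1}, weakly convergent in $H^2(\mathbb{R}^N)$ to some $u$, possibly after translations. The plan is to show that the critical term cannot lose mass, either by concentrating at points or by escaping to infinity, and then to conclude with the Brezis--Lieb lemma.

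\textbf{Step 1: preliminary bounds.} Since $\Upsilon_{V_1,T}$ is coercive on $S_c$ by Lemma \ref{lem3.1}, the sequence $\{u_n\}$ is bounded in $H^2(\mathbb{R}^N)$. Moreover $\Upsilon_{V_1,T}(u_n)<0$ for $n$ large, so Lemma \ref{lem3.3}(ii) gives $\|\Delta u_n\|_2<R_0$. Hence $\eta(\|\Delta u_n\|_2)=1$, and along the sequence the functional coincides with the untruncated $\Upsilon_{V_1}$.

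\textbf{Step 2: ruling out vanishing.} The argument inside the proof of Lemma \ref{lem3.7} shows $|u_n|_q^q\ge C>0$; otherwise \eqref{e2.3} would force $\liminf\Upsilon_{V_1,T}(u_n)\ge0$. By Lions' lemma there exist $y_n$ with $\int_{B_1(y_n)}|u_n|^2\ge\delta>0$. Because $V_1$ is constant, the functional is translation invariant, so we replace $u_n$ by $u_n(\cdot+y_n)$ and obtain $u\neq0$.

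\textbf{Step 3: excluding atoms and mass at infinity.} We apply Lemmas \ref{lem2.2} and \ref{lem2.3} to $|\Delta u_n|^2$ and $|u_n|^{2^{**}}$. Suppose some atom $x_i$ has $\nu_i>0$. Then (iii) gives $\kappa_i\ge S\nu_i^{2/2^{**}}$, so $\nu_i\ge S^{N/4}$ unless the sequence carries no energy there. On the other hand $\kappa_i\le\limsup\|\Delta u_n\|_2^2\le R_0^2$, and $\nu_i\le S^{-2^{**}/2}\kappa_i^{2^{**}/2}$. These combine to
$$\kappa_i\ge S^{N/4}\quad\text{against}\quad \kappa_i\le R_0^2<S^{N/4},$$
using the smallness of $R_0$ in \eqref{e2.3}. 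We will need to confirm $R_0^2<S^{N/4}$; if \eqref{e2.3} only gives $R_0<S^{N/4}$, we shrink $R_0$ further, which the construction allows. The same argument with Lemma \ref{lem2.3}(v) gives $\nu_\infty=0$. Alternatively, and more robustly, we use the standard dichotomy. Write $u_n=u+w_n$ with $w_n\rightharpoonup0$ and split the energy by Brezis--Lieb:
$$\Upsilon_{V_1}(u_n)=\Upsilon_{V_1}(u)+\Upsilon_{V_1}(w_n)+o(1).$$
If $|u|_2=c_1<c$, then using $\|\Delta w_n\|_2<R_0$, Lemma \ref{lem3.4}/\ref{lem3.5} and the strict subadditivity in Lemma \ref{lem3.7} we get
$$\tilde d_{V_1,T,c}\ge\tilde d_{V_1,T,c_1}+\tilde d_{V_1,T,\,c-c_1}>\tilde d_{V_1,T,c},$$
a contradiction. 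Here Lemma \ref{lem3.6} handles the limit of $|w_n|_2$. Hence $|u|_2=c$, so $u_n\to u$ in $L^2$. Interpolating with the uniform $H^2$ bound gives convergence in $L^m$ for $2\le m<2^{**}$, and in particular the $q$-term converges.

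\textbf{Step 4: the critical term (main obstacle).} Convergence in $L^{2^{**}}$ itself does not follow from interpolation. We use $|w_n|_q\to0$, the smallness $\|\Delta w_n\|_2<R_0$, and
$$\Upsilon_{V_1}(w_n)\ge\tfrac12\|\Delta w_n\|_2^2-\tfrac1{2^{**}}S^{-2^{**}/2}\|\Delta w_n\|_2^{2^{**}}+o(1)\ge0,$$
which holds by \eqref{e2.3}. Together with $\Upsilon_{V_1}(u)\ge\tilde d_{V_1,T,c}$ and the splitting, this forces $\Upsilon_{V_1}(w_n)\to0$. The bracket is bounded below by $c_0\|\Delta w_n\|_2^2$ on $[0,R_0]$ if $R_0$ is small, so $\|\Delta w_n\|_2\to0$. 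The Sobolev inequality \eqref{e2.1} then gives $|w_n|_{2^{**}}\to0$, that is, $u_n\to u$ in $L^{2^{**}}(\mathbb{R}^N)$. The delicate point throughout is keeping $R_0$ small enough that the truncated critical term is dominated by the gradient term.
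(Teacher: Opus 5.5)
Your proof is correct, but only through the branch you introduce with ``alternatively, and more robustly''. The first argument in Step 3 does not work and should be dropped. From Lemma \ref{lem2.2}(iii) together with $\kappa_i\le R_0^2$ you only get the upper bound $\nu_i\le (R_0^2/S)^{2^{**}/2}$, and nothing forces $\nu_i\ge S^{N/4}$. The dichotomy ``$\kappa_i=0$ or $\kappa_i\ge S^{N/4}$'' needs the reverse inequality $\kappa_i\le\nu_i$. The paper obtains this, as $\kappa_i=\nu_i$, by testing the equation with $\hat{\varpi}_\rho u_n$ and letting $n\to\infty$, $\rho\to0$. That step requires $\{u_n\}$ to be a (PS) sequence, and you never use the equation.

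\textbf{Comparison with the paper.} The paper's proof is exactly that localized concentration-compactness argument: cut-offs at the atoms and at infinity give $\kappa_i=\nu_i$ and $\kappa_\infty=\nu_\infty$, and the smallness of $R_0$ relative to $S^{N/4}$ excludes both. Your working route is instead a global energy argument:
\begin{itemize}
\item vanishing is excluded by Lions' lemma plus translation invariance;
\item dichotomy is excluded by the strict subadditivity of Lemma \ref{lem3.7}, together with Lemma \ref{lem3.6};
\item the remaining loss of $L^{2^{**}}$-mass is excluded by the Brezis--Lieb splitting and the coercivity of $m\mapsto\frac{1}{2}m^2-\frac{1}{2^{**}}S^{-\frac{2^{**}}{2}}m^{2^{**}}$ on $[0,R_0]$.
\end{itemize}
This buys several things. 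It works for an arbitrary minimizing sequence, with no Ekeland principle and no Lagrange multipliers. It gives strong $H^2$ convergence, so it proves Lemma \ref{lem3.9} at the same time. It also makes explicit an input the paper uses tacitly: the paper's step $\lim_{R\to\infty}\lim_{n\to\infty}\int\phi_R|u_n|^q\,dx=0$ at infinity holds only once $|u_n|^q$ loses no mass at infinity, i.e.\ once $u_n\to u$ in $L^q$. That is precisely what your Step 3 establishes. The price is that your conclusion holds for the translated sequence, which is anyway how the lemma is used in Case 3 of Lemma \ref{lem3.9}.

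\textbf{Two minor points.} First, masses add in squares, so write $|u|_2^2=c_1$ and $|w_n|_2^2\to c-c_1$ in the subadditivity step. Second, you need not shrink $R_0$. Since $H_c(R_0)=0$, we have $\frac{1}{2}-\frac{1}{2^{**}}S^{-\frac{2^{**}}{2}}R_0^{2^{**}-2}=\frac{\mu}{q}C_{N,q}^{q}c^{\frac{4q-(q-2)N}{8}}R_0^{\frac{(q-2)N-8}{4}}>0$, which already serves as your constant $c_0$. Alternatively, use $\limsup_n\|\Delta w_n\|_2^2\le R_0^2-\|\Delta u\|_2^2<R_0^2$.
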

\begin{proof}
In fact, we have $\|\Delta u\|_{2}\geq R_{0}$, for sufficiently large $n$, by Lemma \ref{lem2.2}
and \ref{lem2.3}, and there exist two positive measures $\nu,\kappa\in\mathcal{M}(\mathbb{R}^{N})$ such that
\begin{equation}\label{e3.7}
\|\Delta u\|_{2}^{2}\rightharpoonup \kappa ~ \mbox{and}\ ~ \|u_{n}\|^{2^{**}}\rightharpoonup \nu
\hbox{ in }
\mathcal{M}(\mathbb{R}^{N}),\quad 
\hbox{  as }
n\rightarrow\infty.
\end{equation}
We define that $\hat{\varpi}_{\rho}(x):=\hat{\varpi}(\frac{x-x_{i}}{\rho})$ as a cut-off function,
where $\hat{\varpi}\in C_{c}^{\infty}(\mathbb{R}^{N})$, $\hat{\varpi}=1$ in $B_{1}$,
$\hat{\varpi}=0$ in $B_{2}^{c}$, and $\|\Delta \hat{\varpi}\|_{L^{\infty}(\mathbb{R}^{N})}\leq2$.
Noting that $\{u_{n}\hat{\varpi}_{\rho}\}$ is bounded in $H^{2}(\mathbb{R}^{N})$ and
$\hat{\varpi}_{\rho}$ takes values in $\mathbb{R}$, we have
$\langle \Upsilon'(u_{n}),\hat{\varpi}_{\rho}u_{n}\rangle\rightarrow0$, as $n\rightarrow\infty$,
for sufficiently large $n\in\mathbb{N}$.  In view of this and $(V_{1})$,
we can deduce that
\begin{align*}
&\int_{\mathbb{R}^{N}}|\Delta u|^{2}\hat\varpi_{\rho}(x)dx+\int_{\mathbb{R}^{N}}
u_{n}|\Delta u_{n}|\Delta\hat{\varpi}_{\rho}(x)dx+2\int_{\mathbb{R}^{N}}|\Delta u_{n}|\nabla
u_{n}\nabla\hat{\varpi}_{\rho}(x)dx  \\
&=\mu\int_{\mathbb{R}^{N}}|u_{n}|^{q}\hat{\varpi}_{\rho}(x)dx+\int_{\mathbb{R}^{N}}
|u_{n}|^{2^{**}}\hat{\varpi}_{\rho}(x)dx+o_{n}(1).
\end{align*}
Using the H\"{o}lder inequality, we get
\begin{eqnarray*}
\limsup_{n\rightarrow\infty}\left|\int_{\mathbb{R}^{N}}\Delta u _{n}\Delta\hat{\varpi}_{\rho}u_{n}dx\right| &\leq&\nonumber
\limsup_{n\rightarrow\infty}\left(\int_{\mathbb{R}^{N}}|\Delta u _{n}|^{2}dx\right)^{\frac{1}{2}}
\left(\int_{\mathbb{R}^{N}}|u _{n}\Delta\hat{\varpi}_{\rho}|^{2}dx\right)^{\frac{1}{2}}\\
&\leq& C_{1}\left(\int_{B_{2\rho}}|u_{n}|^{2}|\Delta\hat{\varpi}_{\rho}|^{2}\right)^{\frac{1}{2}}\\
&\leq& C_{1}\left(\int_{B_{2\rho}}|\Delta\hat{\varpi}_{\rho}|^{\frac{N}{2}}dx\right)^{\frac{2}{N}}
\left(\int_{B_{2\rho}}|u|^{2^{**}}dx\right)^{\frac{1}{2^{**}}}  \\
&\leq& C_{2}\left(\int_{B_{2\rho}}|u|^{2^{**}}dx\right)^{\frac{1}{2^{**}}}\rightarrow0, \quad ~\mbox{as}\ ~\rho\rightarrow0.
\end{eqnarray*}
Similarly, by the definition of $\hat{\varpi}_{\rho}$
and the boundedness of $\{u_{n}\}$, we have
$$\lim\limits_{\rho\rightarrow0}\limsup\limits_{n\rightarrow\infty}\int_{\mathbb{R}^{N}}\Delta u_{n}\nabla u_{n}\nabla
\hat{\varpi}_{\rho}dx=0.$$
Recalling that $q\in(2,2+\frac{8}{N})$ and the definition of $\hat{\varpi}_{\rho}$, we have
$$\lim\limits_{\rho\rightarrow0}\limsup\limits_{n\rightarrow\infty}\int_{\mathbb{R}^{N}}|u|^{q}
\hat{\varpi}_{\rho}dx=0,$$
so  by \eqref{e3.7} and Lemma \ref{lem2.2},
$$\lim\limits_{\rho\rightarrow0}\lim\limits_{n\rightarrow\infty}\int_{\mathbb{R}^{N}}|\Delta u_{n}|^{2}
\hat{\varpi}_{\rho}dx=\lim_{\rho\rightarrow0}\int_{\mathbb{R}^{N}}\hat{\varpi}_{\rho}dk=\kappa(\{x_{i}\})=\kappa_{i},$$
$$\lim\limits_{\rho\rightarrow0}\lim\limits_{n\rightarrow\infty}\int_{\mathbb{R}^{N}}|u|^{2^{**}}
\hat{\varpi}_{\rho}dx=\lim_{\rho\rightarrow0}\int_{\mathbb{R}^{N}}\hat{\varpi}_{\rho}dv=\nu(\{x_{i}\})=\nu_{i}.$$
Letting $\rho\rightarrow0$, we can deduce that $\kappa_{i}=\nu_{i}.$ Moreover, we obtain $\kappa_{i}\geq S\kappa_{i}\frac{2}{2^{**}},$
which implies that
\begin{equation*}
\mbox{(i)}\ ~ \kappa_{i}=0 \quad  \mbox{or}\quad \mbox{(ii)}\ ~ \kappa_{i}\geq S^{\frac{4}{N}}.
\end{equation*}
Assume, to the contrary,  that there existed $i_{0}\in I$ such that
$\kappa_{i_{0}}\geq S^\frac{N}{4}.$
Then, we would obtain that
$$R_{0}^{2}\geq\lim\limits_{\rho\rightarrow0}\lim\limits_{n\rightarrow\infty}\|\Delta u_{n}\|_{2}^{2}\geq\lim\limits_{\rho\rightarrow0}\lim\limits_{n\rightarrow\infty}\int_{\mathbb{R}^{N}}
|\Delta u_{n}|^{2}\nabla\hat{\varpi}_{\rho}dx=\lim\limits_{\rho\rightarrow0}
\int_{\mathbb{R}^{N}}\hat{\varpi}_{\rho}d\kappa\geq S^{\frac{N}{4}} $$
which would contradict $\eqref{e2.3}$. So, indeed,
$$u_{n}\rightarrow u \quad \mbox{in}\ ~ L_{loc}^{2^{**}}(\mathbb{R}^{N}).$$
In addition, we define a cut-off function, $\phi\in C^{\infty}(\mathbb{R}^{N})$
to be such that
 $\phi=0$ in $B_{1}$ and $\phi=1$ in $\mathbb{R}^{N}\setminus B_{2}$;
we set $\phi_{R}(x)=\phi(\frac{x}{R})$. Note that $\{u_{n}\phi_{R}\}$ is bounded on
$H^{2}(\mathbb{R}^{N})$ and $\phi_{R}$ takes values in $\mathbb{R}$, so a direct calculation
shows that $\langle \Upsilon'(u_{n}\phi_{R}),u_{n}\phi_{R}\rangle\rightarrow0$,  as
$n\rightarrow\infty$, for sufficiently large $n\in\mathbb{N}$.  According
to these facts, we can deduce that
\begin{align*}
&\nonumber\int_{\mathbb{R}^{N}}|\Delta u|^{2}\phi_{R}dx+\int_{\mathbb{R}^{N}}
u_{n}|\Delta u_{n}|\Delta\phi_{R}(x)dx+2\int_{\mathbb{R}^{N}}|\Delta u_{n}|\nabla
u_{n}\nabla\phi_{R}(x)dx  \\
&=\mu\int_{\mathbb{R}^{N}}|u_{n}|^{q}\phi_{R}(x)dx+\int_{\mathbb{R}^{N}}
|u_{n}|^{2^{**}}\phi_{R}(x)dx+o_{n}(1).
\end{align*}
It is easy to see that that
$$\lim\limits_{R\rightarrow\infty}\lim\limits_{n\rightarrow\infty}\int_{\mathbb{R}^{N}}u_{n}|\Delta u_{n}|
\Delta\phi_{R}dx=0$$
and
$$\lim\limits_{R\rightarrow\infty}\lim\limits_{n\rightarrow\infty}\int_{\mathbb{R}^{N}}|\Delta u_{n}|\nabla
u_{n}\nabla\phi_{R}(x)dx=0.$$
By the definition of $\phi_{R}$, one has
$$\int_{\{x\in\mathbb{R}^{N}:|x|>R\}}|\Delta u_{n}|^{2}dx\leq\int_{\mathbb{R}^{N}}
\phi_{R}|\Delta u_{n}|^{2}dx\leq\int_{\{x\in\mathbb{R}^{N}:|x|>\frac{R}{2}\}}|\Delta u_{n}|^{2}dx.$$
Thus, by Lemma \ref{lem2.3}, we have
\begin{equation*}
\lim\limits_{R\rightarrow\infty}\lim\limits_{n\rightarrow\infty}\int_{\mathbb{R}^{N}}\phi_{R}|\Delta u_{n}|^{2}
dx=\kappa_{\infty}.
\end{equation*}
Similarly, we obtain that
$$\lim\limits_{R\rightarrow\infty}\lim\limits_{n\rightarrow\infty}\int_{\mathbb{R}^{N}}\phi_{R}
|u_{n}|^{2^{**}}dx=\nu_{\infty}$$
and
$$\lim\limits_{R\rightarrow\infty}\lim\limits_{n\rightarrow\infty}\int_{\mathbb{R}^{N}}
\phi_{R}|u_{n}|^{q}dx=\lim\limits_{R\rightarrow\infty}\int_{\mathbb{R}^{N}}\phi_{R}|u|^{q}dx
\lim\limits_{R\rightarrow\infty}\int_{|x>\frac{R}{2}|}\phi_{R}|u|^{q}dx=0.$$
Letting $R\rightarrow\infty$, we deduce that $\kappa_{\infty}=\nu_{\infty}$.
Furthermore, we obtain $\kappa_{\infty}\geq S\kappa_{\infty}^{\frac{N}{4}}$ and get that
\begin{equation}\label{e3.12}
\mbox{(iii)}\ ~ \kappa_{\infty}=0 \quad \mbox{or}\quad \mbox{(iv)}\ ~\kappa_{\infty}\geq S^{\frac{N}{4}}.
\end{equation}
Similarly, we claim that $(iv)$ cannot occur. So, we have
$$u_{n}\rightarrow u \quad \mbox{in}\ ~ L^{2^{**}}(\mathbb{R}^{N}\setminus B_{R}(0));$$
hence, we know that
$$u_{n}\rightarrow u \quad \mbox{in}\ ~ L^{2^{**}}(\mathbb{R}^{N}).$$
This completes the proof of Lemma \ref{lem3.8}.
\end{proof}

\begin{lemma}\label{lem3.9}
Let $\{u_{n}\}\subset S_{c}$ be a minimizing sequence with respect to
$\tilde{d}_{V_{1},T,c}$. Then, for some subsequence,

$(i)$ either $\{u_{n}\}$ is strongly convergent,

$(ii)$ or there exists $(y_{n})\subset\mathbb{R}^{N}$ with $(y_{n})\rightarrow\infty$
such that the sequence $\bar{u}_{n}=u_{n}(x+y_{n})$ is strongly convergent to a function
$\bar{u}\in S_{c}$ with $\Upsilon_{V_{1},T,c}(\bar{u})=\tilde{d}_{V_{1},T,c}$.
\end{lemma}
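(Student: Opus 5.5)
We prove Lemma \ref{lem3.9} by the standard concentration--compactness scheme for $L^2$-subcritical constrained minimization, with the strict subadditivity from Lemma \ref{lem3.7} excluding dichotomy and Lemma \ref{lem3.8} handling the critical term.

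\emph{Step 1: a priori bounds and the weak limit.} Since $\tilde d_{V_1,T,c}<0$ by Lemma \ref{lem3.2}, we have $\Upsilon_{V_1,T}(u_n)<0$ for large $n$. Lemma \ref{lem3.3}(ii) then gives $\|\Delta u_n\|_2<R_0$, so $\eta(\|\Delta u_n\|_2)=1$ and $\Upsilon_{V_1,T}=\Upsilon_{V_1}$ near each $u_n$. By coercivity (Lemma \ref{lem3.1}), $\{u_n\}$ is bounded in $H^2(\mathbb{R}^N)$. After passing to a subsequence, $u_n\rightharpoonup u$ in $H^2(\mathbb{R}^N)$.

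\emph{Step 2: the case $u\neq0$.} Set $c_1=|u|_2^2$ and $w_n=u_n-u$. The Brezis--Lieb lemma splits the $L^2$, $L^q$ and $L^{2^{**}}$ norms, and the $\|\Delta\cdot\|_2^2$ term splits by weak convergence. Since $\|\Delta u\|_2,\|\Delta w_n\|_2\le R_0+o(1)$, the truncation equals $1$ on both pieces, which gives
\[
\Upsilon_{V_1,T}(u_n)=\Upsilon_{V_1,T}(u)+\Upsilon_{V_1,T}(w_n)+o_n(1).
\]
Suppose $c_1<c$ and $|w_n|_2^2\to c-c_1>0$. Using Lemma \ref{lem3.6} (continuity in $c$), we get
\[
\tilde d_{c}\ge \tilde d_{c_1}+\tilde d_{c-c_1}.
\]
Lemma \ref{lem3.7} gives $\tilde d_{c_1}>\frac{c_1}{c}\tilde d_c$ and $\tilde d_{c-c_1}>\frac{c-c_1}{c}\tilde d_c$. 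Adding these yields $\tilde d_c>\tilde d_c$, a contradiction. Hence $|u|_2^2=c$, so $u_n\to u$ in $L^2$. Interpolation with the $H^2$ bound gives $u_n\to u$ in $L^q$. Lemma \ref{lem3.8} (whose proof rules out concentration because $R_0^2<S^{N/4}$) gives $u_n\to u$ in $L^{2^{**}}$. Weak lower semicontinuity of $\|\Delta\cdot\|_2^2$ and of the $V_1$-term then yields $\Upsilon_{V_1,T}(u)\le\tilde d_{V_1,T,c}$. Therefore equality holds, $\|\Delta u_n\|_2\to\|\Delta u\|_2$, and $u_n\to u$ strongly in $H^2(\mathbb{R}^N)$; this is alternative (i).

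\emph{Step 3: the case $u=0$.} Vanishing is impossible: if $\sup_y\int_{B_1(y)}|u_n|^2\to0$, Lions' lemma gives $|u_n|_q\to0$. Then \eqref{e2.3} yields $\Upsilon_{V_1,T}(u_n)\ge-\frac{\mu}{q}|u_n|_q^q+o(1)\to0$, contradicting $\tilde d<0$. So there are $y_n$ with $\int_{B_1(y_n)}|u_n|^2\ge\beta>0$. Because $u_n\rightharpoonup0$, local compactness forces $|y_n|\to\infty$. The functional $\Upsilon_{V_1,T}$ has constant coefficients, so it is translation invariant. Hence $\bar u_n=u_n(\cdot+y_n)$ is again a minimizing sequence in $S_c$, and its weak limit $\bar u$ is nonzero. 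Applying Step 2 to $\bar u_n$ gives strong convergence to $\bar u\in S_c$ with $\Upsilon_{V_1,T}(\bar u)=\tilde d_{V_1,T,c}$; this is alternative (ii).

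\emph{Main obstacle.} The delicate point is the energy splitting in Step 2 with the truncation $\eta$ and the critical term. One must verify that both pieces stay in the region $\|\Delta\cdot\|_2<R_0$, where $\eta\equiv1$. This holds because $\|\Delta u_n\|_2^2=\|\Delta u\|_2^2+\|\Delta w_n\|_2^2+o(1)<R_0^2$. One must also verify that the critical parts split exactly, which the Brezis--Lieb lemma gives. The exclusion of dichotomy then relies on the strict inequality in Lemma \ref{lem3.7}.
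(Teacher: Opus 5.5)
Your proposal is correct and follows the paper's proof essentially step for step: boundedness and a weak limit; mass loss for $u\neq0$ excluded by the strict inequality of Lemma~\ref{lem3.7}; strong convergence in the full-mass case via Lemma~\ref{lem3.8}; and non-vanishing plus translation with $|y_n|\to\infty$ when $u=0$. The differences are cosmetic. You keep $\eta\equiv1$ on both pieces using $\|\Delta u_n\|_2<R_0$, where the paper uses that $\eta$ is non-increasing. You also apply Lemma~\ref{lem3.7} to both pieces after passing to the limit with Lemma~\ref{lem3.6}, whereas the paper applies it directly to $\tilde d_{V_1,T,e_n}$.
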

\begin{proof}
Observing that $\{u_{n}\}$ is bounded in $H^{2}(\mathbb{R}^{N})$ by Lemma \ref{lem3.1}
and Lemma \ref{lem3.2}, 
we can conclude that
there exists $u\in H^{2}(\mathbb{R}^{N})$ such that $u_{n}\rightharpoonup u$
in $H^{2}(\mathbb{R^{N}})$ for some subsequence. Now, let us study all three possibilities as follows.

\textbf{Case 1:} $u\not\equiv0$ and $\|u\|_{2}^{2}=b<c$.

In this case, letting $v_{n}:=u_{n}-u$, $e_{n}=\|v_{n}\|^{2}_{2}$, one has that
$\|v_{n}\|_{2}^{2}\rightarrow e$, where $c=b+e$. It follows by the Fatou Lemma and
the Br\'{e}zis-Lieb Lemma (see Willem \cite{wm}) that $$\|\Delta u\|_{2}^{2}\leq\liminf\limits_{n\rightarrow\infty}
\|\Delta u_{n}\|_{2}^{2},$$ so we have
$$\|u_{n}\|_{2}^{2}=\|v_{n}\|_{2}^{2}+\|u\|_{2}^{2}+o_{n}(1)$$
and
$$\|\Delta u _{n}\|_{2}^{2}=\|\Delta u_{n}\|_{2}^{2}+\|v_{n}\|_{2}^{2}+o_{n}(1).$$
 Noting that $e_{n}\in(0,c)$ for sufficiently large $n$, and the fact that $\eta$ is continuous, non-increasing and Lemma
\ref{lem3.7}, one has
\begin{eqnarray*}
\tilde{d}_{V_{1},T,c}+o_{n}(1) &=&\nonumber
\Upsilon_{V_{1},T,c}(u_{n})\\
&=& \frac{1}{2}\|\Delta v_{n}\|_{2}^{2}+\frac{1}{2}V_{1}\|v_{n}\|_{2}^{2}-\frac{\mu}{q}
\|v_{n}\|_{q}^{q}-\frac{\eta(\|\Delta v_{n}\|_{2}^{2})}{2^{**}}\|v_{n}\|^{2^{**}}_{2^{**}}\\
&\quad&+\frac{1}{2}\|\Delta u\|_{2}^{2}+\frac{1}{2}V_{1}\|u\|_{2}^{2}-\frac{\mu}{q}
\|u\|_{q}^{q}-\frac{\eta(\|\Delta u\|_{2}^{2})}{2^{**}}\|u\|^{2^{**}}_{2^{**}}+o_{n}(1)\\
&\geq& \Upsilon_{V_{1},T,c}(v_{n})+\Upsilon_{V_{1},T,c}(u)+o_{n}(1)\\
&\geq&\tilde{d}_{V_{1},T,e_{n}}+\tilde{d}_{V_{1},T,b}+o_{n}(1)\\
&\geq&\frac{e_{n}}{c}\tilde{d}_{V_{1},T,c}+\tilde{d}_{V_{1},T,b}+o_{n}(1).
\end{eqnarray*}
By Lemma \ref{lem3.7}, letting $n\rightarrow+\infty$, we obtain that
$$\tilde{d}_{V_{1},T,c}\geq\frac{e}{c}\tilde{d}_{V_{1},T,c}+\tilde{d}_{V_{1},T,b}
>\frac{e}{c}\tilde{d}_{V_{1},T,c}+\frac{b}{c}\tilde{d}_{V_{1},T,c}=\tilde{d}_{V_{1},T,c},$$
which is a contradiction.

\textbf{Case 2:} $\|u_{n}\|_{2}^{2}=c$.

In this case, we have $u_{n}\rightarrow u$ in $L^{2}(\mathbb{R}^{N})$
and $u_{n}\rightarrow u$ in $L^{m}(\mathbb{R}^{N})$, for every $m\in(2,\frac{2N}{N-4})$.
By Lemma \ref{lem3.8} and the Fatou lemma, we obtain that
\begin{eqnarray*}
\tilde{d}_{V_{1},T,c} &=&\nonumber
\lim_{n\rightarrow+\infty}\Upsilon_{V_{1},T,c}(u_{n})\\
&=& \lim_{n\rightarrow+\infty}\left(\frac{1}{2}\|\Delta u_{n}\|_{2}^{2}+\frac{1}{2}V_{1}\|u_{n}\|_{2}^{2}-\frac{\mu}{q}
\|u_{n}\|_{q}^{q}-\frac{\eta(\|\Delta u_{n}\|_{2}^{2})}{2^{**}}\|u_{n}\|^{2^{**}}_{2^{**}}\right)\\
&\geq& \Upsilon_{V_{1},T}(u).
\end{eqnarray*}
Since $u\in S_{c}$, we can conclude that $\Upsilon_{V_{1},T}(u)=\tilde{d}_{V_{1},T,c}$, so
$u_{n}\rightarrow u$ in $H^{2}(\mathbb{R}^{N})$, which implies that $(i)$ indeed occurs.

\textbf{Case 3:} $u\equiv0.$

In this case, $u_{n}\rightharpoonup0$ in $H^{2}(\mathbb{R}^{N})$, we
assert that there exist $R',k_{1}>0$ and the sequence $\{y_{n}\}\subset\mathbb{R}^{N}$ such that, 
for every $n$,
\begin{equation*}
\int_{B_{R'}(y_{n})}|u_{n}|^{2}\geq k_{1}.
\end{equation*}
Otherwise, this would imply that $u_{n}\rightarrow0$ in $L^{m}(\mathbb{R}^{N})$,
for every $m\in(2,\frac{2N}{N-2})$ by the  vanishing lemma. As a result, we would have $\Upsilon_{V_{1},T}(u_{n})
\geq\frac{1}{2}\|\Delta u_{n}\|_{2}^{2}+o_{n}(1)$. However, this would contradict the fact that
$\Upsilon_{V_{1},T}(u_{n})\rightarrow\tilde{d}(V_{1},T,c)<0$.

Hence, in all cases, \eqref{e3.12}
holds, and obviously, $|y_{n}|\rightarrow+\infty.$
 Consequently, by defining $\bar{u}_{n}(x)=u_{n}
(x+y_{n})$, it is evident that $\{\bar{u}_{n}\}\subset S_{c}$, so we also know that it is a minimizing sequence with
respect to $\tilde{d}_{V_{1},T,c}$. Furthermore, there exists $\bar{u}\in H^{2}(\mathbb{R}^{N})$
such that $\bar{u}_{n}\rightharpoonup \bar{u}$ in $H^{2}(\mathbb{R}^{N})$. Following the approach used in the
first two cases, we can conclude that $\bar{u}_{n}\rightarrow \bar{u}$ in $H^{2}(\mathbb{R}^{N})$, thus
confirming the validity of $(ii).$
This completes the proof of Lemma \ref{lem3.10}.
\end{proof}
\begin{lemma}\label{lem3.10}
$\tilde{d}_{V_{1},T,c}$ is attained.
\end{lemma}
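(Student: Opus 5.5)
We derive Lemma \ref{lem3.10} from Lemma \ref{lem3.9}, using translation invariance of the autonomous functional.

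First we produce a minimizing sequence. By Lemma \ref{lem3.1}, $\tilde{d}_{V_{1},T,c}$ is finite. By Lemma \ref{lem3.2} (with $V_{1}<V_{*}$), it is negative. Take $\{u_{n}\}\subset S_{c}$ with $\Upsilon_{V_{1},T}(u_{n})\rightarrow\tilde{d}_{V_{1},T,c}$. Coercivity in Lemma \ref{lem3.1} gives boundedness of $\{u_{n}\}$ in $H^{2}(\mathbb{R}^{N})$.

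For large $n$ we have $\Upsilon_{V_{1},T}(u_{n})<0$. Lemma \ref{lem3.3}(ii) then gives $\|\Delta u_{n}\|_{2}<R_{0}$, so $\eta(\|\Delta u_{n}\|_{2})=1$. The truncation is therefore inactive along the sequence, and Lemma \ref{lem3.8} applies: it rules out concentration of $|u_{n}|^{2^{**}}$ at points and at infinity, since $R_{0}<S^{N/4}$ by \eqref{e2.3}.

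Next we apply Lemma \ref{lem3.9} to $\{u_{n}\}$. In alternative (i), $u_{n}\rightarrow u$ strongly in $H^{2}(\mathbb{R}^{N})$. Then $\|u\|_{2}=c$, so $u\in S_{c}$, and continuity of $\Upsilon_{V_{1},T}$ from Lemma \ref{lem3.3}(i) gives $\Upsilon_{V_{1},T}(u)=\tilde{d}_{V_{1},T,c}$.

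In alternative (ii), the translated sequence $\bar u_{n}=u_{n}(\cdot+y_{n})$ converges strongly to some $\bar u\in S_{c}$ with $\Upsilon_{V_{1},T}(\bar u)=\tilde{d}_{V_{1},T,c}$. This uses that $\Upsilon_{V_{1},T}$ has the constant coefficient $V_{1}$, so translations preserve $S_{c}$, $\|\Delta\cdot\|_{2}$, every $L^{m}$ norm, and hence the functional. Either way, the infimum is attained.

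The main obstacle is the dichotomy inside Lemma \ref{lem3.9}. One must exclude the case $0<\|u\|_{2}^{2}<c$ for the weak limit, which rests on the strict subadditivity of Lemma \ref{lem3.7}. One must also show that the critical term does not lose mass, which is Lemma \ref{lem3.8}. Since both are established before this point, the remaining work is bookkeeping.

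One point needs care: the limit $u$ must satisfy $\|\Delta u\|_{2}<R_{0}$, so that $\eta$ is evaluated consistently. This follows from weak lower semicontinuity of $\|\Delta\cdot\|_{2}$ together with the uniform bound $\|\Delta u_{n}\|_{2}<R_{0}$. It guarantees that $\eta(\|\Delta u\|_{2})=1$. Hence the minimizer is also a constrained critical point of the untruncated $\Upsilon_{V_{1}}$, again by Lemma \ref{lem3.3}(ii).
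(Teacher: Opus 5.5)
Your proposal is correct and follows the paper's proof: you take a bounded minimizing sequence (Lemma~\ref{lem3.1}) and apply the dichotomy of Lemma~\ref{lem3.9}, whose strong (possibly translated) limit lies in $S_{c}$ and realizes $\tilde{d}_{V_{1},T,c}$, and then Lemma~\ref{lem3.3}(ii) transfers the conclusion to the untruncated $\Upsilon_{V_{1}}$. One small slip: weak lower semicontinuity only gives $\|\Delta u\|_{2}\le R_{0}$, but the strict inequality you need for the neighborhood statement follows directly from Lemma~\ref{lem3.3}(ii), since $\Upsilon_{V_{1},T}(u)=\tilde{d}_{V_{1},T,c}<0$.
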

\begin{proof}
By Lemma \ref{lem3.1} and Lemma \ref{lem3.9}, there exists a bounded minimizing sequence
$\{u_{n}\}\subset S_{c}$ and $u_{n}\rightarrow u$ in $H^{2}(\mathbb{R}^{N})$ with respect
to $\tilde{d}_{V_{1},T,c}=\Upsilon_{V_{1},T}<0$. Then, $\{u_{n}\}$ is also a minimizing sequence
for $\Upsilon_{V_{1}}(u)$ and $\tilde{d}_{V_{1},T,c}=\Upsilon_{V_{1},T}$ by Lemma \ref{lem3.3}.
This completes the proof of Lemma \ref{lem3.10}.
\end{proof}
A direct result of Lemma \ref{lem3.10} is the following corollary.
\begin{corollary}\label{cor3.1}
If $V_{1}<V_{2}<V_{*},$
then
 $\tilde{d}_{V_{1},T,c}<\tilde{d}_{V_{2},T,c}$.
\end{corollary}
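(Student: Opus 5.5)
The plan is to compare the two infima through a minimizer of the larger-potential problem. By Lemma \ref{lem3.10} applied with $V_{2}$ in place of $V_{1}$ (allowed because $V_{2}<V_{*}$), there is a $u_{2}\in S_{c}$ with
$$\Upsilon_{V_{2},T}(u_{2})=\tilde{d}_{V_{2},T,c}.$$
Lemma \ref{lem3.2} gives $\tilde{d}_{V_{2},T,c}<0$, so Lemma \ref{lem3.3}(ii) shows that $\|\Delta u_{2}\|_{2}<R_{0}$ and that the truncation is inactive near $u_{2}$. We will not need this last fact, but it confirms that $u_{2}$ is a genuine minimizer.

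Next we use $u_{2}$ as a test function in the problem with potential $V_{1}$. The two truncated functionals differ only in the potential term. Since $\int|u_{2}|^{2}dx$ is the constraint value of $S_{c}$ (denote it $c$ or $c^{2}$ according to the paper's normalization), this gives
$$\tilde{d}_{V_{1},T,c}\leq\Upsilon_{V_{1},T}(u_{2})=\Upsilon_{V_{2},T}(u_{2})-\frac{V_{2}-V_{1}}{2}\int_{\mathbb{R}^{N}}|u_{2}|^{2}dx.$$
Hence
$$\tilde{d}_{V_{1},T,c}\leq\tilde{d}_{V_{2},T,c}-\frac{V_{2}-V_{1}}{2}\int_{\mathbb{R}^{N}}|u_{2}|^{2}dx<\tilde{d}_{V_{2},T,c},$$
because $V_{2}>V_{1}$ and $u_{2}\neq0$ on the sphere.

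The only step that needs care is the existence of the minimizer $u_{2}$. The strict inequality depends on it: with only a minimizing sequence, the argument above would give a non-strict inequality, since the gap is the fixed positive amount $\frac{V_{2}-V_{1}}{2}\|u_{n}\|_{2}^{2}$ but limits produce only $\leq$. Attainment holds for every admissible constant potential below $V_{*}$, because Lemma \ref{lem3.10} was proved for an arbitrary $V_{1}\in[0,|V|_{\infty}]$ with $V_{1}<V_{*}$. Everything else is the one-line comparison above.
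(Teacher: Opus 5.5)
Your argument is correct and is the paper's proof: take a minimizer $u\in S_c$ of $\Upsilon_{V_2,T}$ from Lemma \ref{lem3.10}, use it as a competitor for $\Upsilon_{V_1,T}$, and note that the two functionals differ only in the potential term. One correction to your closing remark: attainment is not actually needed. For constant potentials, $\frac{V_2-V_1}{2}\int_{\mathbb{R}^N}|u|^2dx$ takes the same value for every $u\in S_c$, so the gap is independent of $n$ and survives the limit along any minimizing sequence; in fact $\tilde{d}_{V_{2},T,c}-\tilde{d}_{V_{1},T,c}$ equals exactly $\frac{V_2-V_1}{2}$ times the constraint value.
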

\begin{proof}
Let $u\in S_{c}$ satisfy $\tilde{d}_{V_{2},T,c}=\Upsilon_{V_{2},T}$. Then,
$\tilde{d}_{V_{1},T,c}\leq\Upsilon_{V_{1},T}<\Upsilon_{V_{1},T}=\tilde{d}_{V_{2},T,c}$.
\end{proof}

\section{Energy Functional $\Upsilon_{\varepsilon,T}(u)$}\label{S4}
In this section we will establish key properties of the following functional
$\Upsilon_{\varepsilon,T}(u):H^{2}(\mathbb{R}^{N})\rightarrow\mathbb{R},$  given by
$$\Upsilon_{\varepsilon,T}(u)=\frac{1}{2}\|\Delta u\|_{2}^{2}+\frac{1}{2}\int_{\mathbb{R}^{N}}V(\varepsilon x)|u|^{2}
-\frac{\mu}{q}\|u\|_{q}^{q}-\frac{\eta(\|\Delta u\|_{2}^{2})}{2^{**}}\|u\|^{2^{**}}_{2^{**}}.$$
More precisely, we will study the following minimum value:
$$\tilde{d}_{\varepsilon,T,c}:=\inf\limits_{u\in S_{c}}\Upsilon_{\varepsilon,T}(u),$$
where $\tilde{d}_{\varepsilon,T,c}$ is well defined by the properties of $H_{T,c}(m)$.
We will denote by $\Upsilon_{0,T}$, $\Upsilon_{\infty,T}:H^{2}(\mathbb{R}^{N})
\rightarrow\mathbb{R}$ the following functionals:
$$\Upsilon_{0,T}(u)=\frac{1}{2}\|\Delta u\|_{2}^{2}
-\frac{\mu}{q}\|u\|_{q}^{q}-\frac{\eta(\|\Delta u\|_{2}^{2})}{2^{**}}\|u\|^{2^{**}}_{2^{**}}$$
and
$$\Upsilon_{\infty,T}(u)=\frac{1}{2}\|\Delta u\|_{2}^{2}+\frac{1}{2}\int_{\mathbb{R}^{N}}V_{\infty}|u|^{2}dx
-\frac{\mu}{q}\|u\|_{q}^{q}-\frac{\eta(\|\Delta u\|_{2}^{2})}{2^{**}}\|u\|^{2^{**}}_{2^{**}}.$$
By $(V_{1})-(V_{3})$, $V_{\infty}<V_{*},$ and Lemma \ref{lem3.10}, the minimum values
$\tilde{d}_{0,T,c}$ and $\tilde{d}_{\infty,T,c}$ defined by
$$\tilde{d}_{0,T,c}:=\inf\limits_{u\in S_{c}}\Upsilon_{0,T}(u) \quad \mbox{and}\quad
\tilde{d}_{\infty,T,c}:=\inf\limits_{u\in S_{c}}\Upsilon_{\infty,T}(u),$$
respectively, are attained. There exist $u_{0},u_{\infty}\in S_{c}$ such that
$$\Upsilon_{0,T}(u_{0})=\tilde{d}_{0,T,c} \quad \mbox{and} \quad\Upsilon_{\infty,T}=\tilde{d}_{\infty,T,c}.$$
Furthermore, by Corollary \ref{cor3.1} and $V_{0}<V_{\infty}$, we have
$\tilde{d}_{0,T,c}<\tilde{d}_{\infty,T,c}<0$.
\begin{lemma}\label{lem4.1}
We have $\limsup\limits_{\varepsilon\rightarrow0^{+}}\tilde{d}_{\varepsilon,T,c}\leq\tilde{d}_{0,T,c}$.
\end{lemma}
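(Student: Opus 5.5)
We test the infimum $\tilde{d}_{\varepsilon,T,c}$ with a single fixed function: a minimizer of the autonomous problem, unscaled and unshifted. By Lemma \ref{lem3.10}, applied with $V_{1}=0=V_{0}$, there is $u_{0}\in S_{c}$ with $\Upsilon_{0,T}(u_{0})=\tilde{d}_{0,T,c}$. Since $u_{0}\in S_{c}$ is admissible for every $\varepsilon>0$, we have
\begin{equation*}
\tilde{d}_{\varepsilon,T,c}\leq\Upsilon_{\varepsilon,T}(u_{0})=\Upsilon_{0,T}(u_{0})+\frac{1}{2}\int_{\mathbb{R}^{N}}V(\varepsilon x)|u_{0}|^{2}dx .
\end{equation*}
The kinetic, $L^{q}$ and truncated critical terms coincide in $\Upsilon_{\varepsilon,T}$ and $\Upsilon_{0,T}$. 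The reason is that the cut-off $\eta$ depends only on $\|\Delta u\|_{2}$, which does not involve $\varepsilon$. So the two functionals differ only in the potential term.

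It therefore suffices to show
\begin{equation*}
\int_{\mathbb{R}^{N}}V(\varepsilon x)|u_{0}(x)|^{2}dx\rightarrow V(0)\int_{\mathbb{R}^{N}}|u_{0}|^{2}dx=0,\quad \varepsilon\rightarrow0^{+}.
\end{equation*}
Here $V(0)=0$ by $(V_{3})$, since $k_{1}=0$ is a global minimum point with $V_{0}=0$. The proof is dominated convergence. For each $x$, continuity of $V$ gives $V(\varepsilon x)\rightarrow V(0)=0$. The integrand is dominated by $|V|_{\infty}|u_{0}|^{2}\in L^{1}(\mathbb{R}^{N})$ by $(V_{1})$.

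Taking $\limsup_{\varepsilon\rightarrow0^{+}}$ in the displayed inequality then gives $\limsup_{\varepsilon\rightarrow0^{+}}\tilde{d}_{\varepsilon,T,c}\leq\tilde{d}_{0,T,c}$, which is the claim.

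The main point to watch is that the comparison function really is a minimizer attaining $\tilde{d}_{0,T,c}$. That is exactly Lemma \ref{lem3.10} with $V_{1}=0<V_{*}$. If one prefers not to rely on attainment, there is a fallback. Take any $w\in S_{c}$ with $\Upsilon_{0,T}(w)\leq\tilde{d}_{0,T,c}+\delta$, run the same dominated convergence argument for $w$, and let $\delta\rightarrow0$. This gives the same conclusion.

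Two further checks are needed. First, $\Upsilon_{\varepsilon,T}$ is finite on $S_{c}$, which holds by the Gagliardo--Nirenberg estimate in Lemma \ref{lem2.1}. Second, no translation of $u_{0}$ is needed. This is because the minimum of $V$ is achieved at the origin, so $V(\varepsilon\,\cdot)$ concentrates its zero at $x=0$, exactly where $u_{0}$ is placed.
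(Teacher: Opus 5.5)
Your proof is correct and follows essentially the same route as the paper: you test $\tilde{d}_{\varepsilon,T,c}$ with the minimizer $u_{0}$ of $\Upsilon_{0,T}$ given by Lemma \ref{lem3.10}, then pass to the limit in the potential term by dominated convergence, using $V(0)=0$ and the bound $|V|_{\infty}|u_{0}|^{2}$. Your fallback with a $\delta$-almost minimizer is a sensible addition, since it makes the inequality independent of the attainment result.
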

\begin{proof}
By Lemma \ref{lem3.10}, we have $u_{0}\in S_{c}$ with $\Upsilon_{0,T}=\tilde{d}_{0,T,c}$. Then,
$$\tilde{d}_{\varepsilon,T,c}\leq\Upsilon_{\varepsilon,T}(u_{0})=\frac{1}{2}\|\Delta u_{0}\|_{2}^{2}+\frac{1}{2}\int_{\mathbb{R}^{N}}V(\varepsilon x)|u_{0}|^{2}-\frac{\mu}{q}\|u_{0}\|_{q}^{q}-\frac{\eta(\|\Delta u_{0}\|_{2}^{2})}{2^{**}}\|u_{0}\|^{2^{**}}_{2^{**}}.$$
Letting $\varepsilon\rightarrow0^{+}$, we obtain, by the Lebesgue dominated convergence theorem,
$$\limsup\limits_{\varepsilon\rightarrow0^{+}}\tilde{d}_{\varepsilon,T,c}\leq\limsup\limits_{\varepsilon\rightarrow0^{+}}
\Upsilon_{\varepsilon,T}(u_{0})=\Upsilon_{0,T}(u_{0})=\tilde{d}_{0,T,c}.$$
This completes the proof of Lemma \ref{lem4.1}.
\end{proof}
By Lemma \ref{lem4.1} and $\tilde{d}_{0,T,c}<\tilde{d}_{\infty,T,c}$, there exists sufficiently
small $\varepsilon_{0}>0$ such that
$$\tilde{d}_{\varepsilon,T,c}<\tilde{d}_{\infty,T,c}, \quad \mbox{for\ every}\ ~ \varepsilon\in(0,\varepsilon_{0}).$$
Similar to the proof of Lemma \ref{lem4.1}, we can prove the following result.
\begin{lemma}\label{lem4.2}
The energy functional $\Upsilon_{\varepsilon,T}$ has the following properties.
\begin{itemize}
\item[$(i)$]
$\Upsilon_{\varepsilon,T}\in C^{1}(H^{2}(\mathbb{R}^{N}),\mathbb{R})$.
\item[$(ii)$]
If $\Upsilon_{\varepsilon,T}(u)\leq0,$ then $\|\Delta u\|_{2}<R_{0}$. Moreover,  $\Upsilon_{\varepsilon,T}(v)
=\Upsilon_{\varepsilon}(v),$ for every  $v$ in a sufficiently small neighborhood of $u$ in $H^{2}(\mathbb{R}^{N})$.
\end{itemize}
\end{lemma}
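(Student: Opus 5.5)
The plan is to copy the proof of Lemma \ref{lem3.3}, which is the autonomous analogue. The only new feature is the term $\frac12\int_{\mathbb{R}^N}V(\varepsilon x)|u|^2dx$. By $(V_1)$ this term is nonnegative and bounded by $\frac12|V|_\infty|u|_2^2$.

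For $(i)$, I would split $\Upsilon_{\varepsilon,T}$ into four pieces.
\begin{itemize}
\item The map $u\mapsto \frac12\|\Delta u\|_2^2+\frac12\int V(\varepsilon x)|u|^2dx$ is a continuous quadratic form on $H^2(\mathbb{R}^N)$, because $V\in L^\infty$. So it is $C^1$.
\item The map $u\mapsto \frac{\mu}{q}|u|_q^q$ is $C^1$ by Lemma \ref{lem2.1}, since $H^2(\mathbb{R}^N)\hookrightarrow L^q$.
\item The map $u\mapsto \frac{1}{2^{**}}|u|_{2^{**}}^{2^{**}}$ is $C^1$ by the Sobolev embedding \eqref{e2.1}.
\item The cut-off factor $\eta(\|\Delta u\|_2)$ is $C^1$ away from $\Delta u=0$, as a composition of smooth $\eta$ with the norm. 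Near $\Delta u=0$ it is identically $1$, because $\eta\equiv1$ on $[0,R_0]$.
\end{itemize}
The product rule then gives $\Upsilon_{\varepsilon,T}\in C^1$.

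For $(ii)$, fix $u\in S_c$ with $\Upsilon_{\varepsilon,T}(u)\le 0$. Using $V\ge0$ and Lemma \ref{lem2.1} exactly as in Section \ref{S2}, I get
$$0\ge \Upsilon_{\varepsilon,T}(u)\ge H_{T,c}(\|\Delta u\|_2).$$
Since $H_{T,c}$ is positive on $(R_0,+\infty)$, this forces $\|\Delta u\|_2\le R_0$.

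The main obstacle is upgrading this to the strict inequality $\|\Delta u\|_2<R_0$. At $m=R_0$ we have $H_{T,c}(R_0)=H_c(R_0)=0$, so equality is not excluded by this bound alone. I would handle the borderline case by using that $c\le\bar c<m_c$, so $R_0=R_0(c)$ lies strictly below the zero of $H_{\bar c}$ in the regime where the estimate is strict. Failing that, I would fall back on the convention of \cite{xx}: shrink $R_0$ slightly, keeping \eqref{e2.3}, so that $H_{T,c}>0$ on $[R_0,R_1)$. Then $\|\Delta u\|_2=R_0$ would give $\Upsilon_{\varepsilon,T}(u)>0$, a contradiction. Either way this step is the same as the one in Lemma \ref{lem3.3}(ii), now with the extra nonnegative potential term.

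With the strict bound in hand, the neighbourhood claim follows from continuity of $v\mapsto\|\Delta v\|_2$ on $H^2(\mathbb{R}^N)$. There is $\delta>0$ with $\|\Delta v\|_2<R_0$ whenever $\|v-u\|_{H^2}<\delta$. For such $v$ we have $\eta(\|\Delta v\|_2)=1$, hence $\Upsilon_{\varepsilon,T}(v)=\Upsilon_\varepsilon(v)$.
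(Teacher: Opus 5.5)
Your part (i) and the non-strict bound $\|\Delta u\|_2\le R_0$ in (ii) are correct. They follow the route the paper intends; the paper gives no proof of this lemma and only points to the standard truncation argument behind Lemma~\ref{lem3.3}. The gap is exactly the step you flagged, and neither of your two remedies closes it.

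\emph{First remedy.} At best it covers $c<\bar c$, when the plateau endpoint of $\eta$ is the first zero of $H_{\bar c}$ and therefore lies strictly above the first zero of $H_c$. For $c=\bar c$, which the theorem allows, the borderline case is untouched.

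\emph{Second remedy.} It goes the wrong way. Since $\eta\equiv1$ on $[0,R_0]$, you have $H_{T,c}=H_c$ there, and $H_c<0$ to the left of its first zero. Shrinking the plateau endpoint below that zero makes $H_{T,c}(R_0)<0$. The bound $\Upsilon_{\varepsilon,T}(u)\ge H_{T,c}(\|\Delta u\|_2)$ then no longer yields even $\|\Delta u\|_2\le R_0$. What would work is a slight enlargement of the plateau to some $R_0'\in(R_0,R_1)$, but that changes the construction rather than proving the lemma as stated.

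The strictness is not cosmetic. Suppose $\|\Delta u\|_2=R_0$ and $\eta<1$ just to the right of $R_0$. Then $(1+t)u$, with $t>0$ small, lies in every neighbourhood of $u$, yet $\Upsilon_{\varepsilon,T}((1+t)u)\neq\Upsilon_\varepsilon((1+t)u)$.

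The repair you missed is available precisely because this is the non-autonomous functional; you only used $V\ge0$. By $(V_3)$, $V(\varepsilon\,\cdot)$ vanishes only on the finite set $\{k_1/\varepsilon,\dots,k_l/\varepsilon\}$. So for $u\in S_c$ you have $\int_{\mathbb{R}^N}V(\varepsilon x)|u|^2dx>0$, which gives the strict bound $\Upsilon_{\varepsilon,T}(u)>H_{T,c}(\|\Delta u\|_2)$. Hence $\Upsilon_{\varepsilon,T}(u)\le0$ forces $H_{T,c}(\|\Delta u\|_2)<0$. Since $H_{T,c}\ge0$ on $[R_0,\infty)$, this gives $\|\Delta u\|_2<R_0$, and your continuity argument then finishes (ii).

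Alternatively, every later use of the lemma is at strictly negative levels: $t<\tilde d_{\infty,T,c}<0$ in Lemmas~\ref{lem4.4}--\ref{lem4.5}, and $\beta_\varepsilon^i<\tilde d_{0,T,c}+\rho_1/2<\tilde d_{\infty,T,c}<0$. There the hypothesis $\Upsilon_{\varepsilon,T}(u)<0$ already gives $H_{T,c}(\|\Delta u\|_2)<0$, with no borderline case.

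For the same reason, saying this step is ``the same as in Lemma~\ref{lem3.3}(ii)'' is not a justification. With $V_1=0$ there, the potential term provides no strictness.
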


\begin{lemma}\label{lem4.3}
The weak limit $u_{\varepsilon}$ of $\{u_{n}\}$ is nontrivial.
\end{lemma}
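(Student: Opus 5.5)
The setting is presumably that $\{u_n\}\subset S_c$ is a minimizing sequence for $\tilde{d}_{\varepsilon,T,c}$ with $\varepsilon\in(0,\varepsilon_0)$, so that $\tilde{d}_{\varepsilon,T,c}<\tilde{d}_{\infty,T,c}<0$, and $u_n\rightharpoonup u_\varepsilon$ in $H^2(\mathbb{R}^N)$. We argue by contradiction and assume $u_\varepsilon\equiv0$. The idea is that a vanishing weak limit forces the mass to escape to infinity, where the potential looks like $V_\infty$. The energy would then be at least $\tilde{d}_{\infty,T,c}$, which contradicts the strict inequality obtained after Lemma \ref{lem4.1}.

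\textbf{Step 1: boundedness and control of $\|\Delta u_n\|_2$.} Lemma \ref{lem3.1} (its argument applies verbatim, since $V\ge0$) gives boundedness of $\{u_n\}$ in $H^2(\mathbb{R}^N)$. Because $\Upsilon_{\varepsilon,T}(u_n)<0$ for large $n$, Lemma \ref{lem4.2}(ii) gives $\|\Delta u_n\|_2<R_0$. Hence $\eta(\|\Delta u_n\|_2)=1$, so the truncation is inactive along the sequence.

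\textbf{Step 2: the potential term at infinity.} By $(V_2)$, for any $\delta>0$ there is $R>0$ with $V(x)\ge V_\infty-\delta$ for $|x|\ge R$. Since $u_n\rightharpoonup0$, compactness of the embedding $H^2\hookrightarrow L^2_{loc}$ gives $\int_{B_{R/\varepsilon}}|u_n|^2\,dx\to0$. Therefore
$$\int_{\mathbb{R}^N}V(\varepsilon x)|u_n|^2\,dx\ \ge\ (V_\infty-\delta)\int_{|x|\ge R/\varepsilon}|u_n|^2\,dx\ =\ (V_\infty-\delta)c^2+o_n(1),$$
where we use the boundedness of $V$ on the ball. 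This yields
$$\Upsilon_{\varepsilon,T}(u_n)\ \ge\ \Upsilon_{\infty,T}(u_n)-\tfrac{\delta}{2}c^2+o_n(1)\ \ge\ \tilde{d}_{\infty,T,c}-\tfrac{\delta}{2}c^2+o_n(1).$$
The last inequality holds because $u_n\in S_c$ and the truncation terms of the two functionals coincide.

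\textbf{Step 3: conclusion.} Letting $n\to\infty$ and then $\delta\to0$ gives $\tilde{d}_{\varepsilon,T,c}\ge\tilde{d}_{\infty,T,c}$. This contradicts $\tilde{d}_{\varepsilon,T,c}<\tilde{d}_{\infty,T,c}$ for $\varepsilon\in(0,\varepsilon_0)$, so $u_\varepsilon\not\equiv0$.

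\textbf{Main obstacle.} The delicate point is Step 2. The weak convergence is used only through local $L^2$ compactness, and the $L^q$ and $L^{2^{**}}$ terms need no convergence at all, because they appear identically in $\Upsilon_{\varepsilon,T}$ and $\Upsilon_{\infty,T}$. If $\{u_n\}$ is instead a Palais--Smale sequence rather than a minimizing one, we would first use Ekeland's principle on $S_c$ to pass to a minimizing Palais--Smale sequence, and then run the same comparison.
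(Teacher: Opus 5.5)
Your proof is correct and follows the paper's argument. Assuming $u_\varepsilon=0$, you combine $(V_2)$ with local $L^2$ compactness on $B_{R/\varepsilon}(0)$ to get $\Upsilon_{\varepsilon,T}(u_n)\ge\Upsilon_{\infty,T}(u_n)-C\delta+o_n(1)\ge\tilde{d}_{\infty,T,c}-C\delta+o_n(1)$, and then contradict the fact that the energy level lies strictly below $\tilde{d}_{\infty,T,c}$. One small point: your closing Ekeland detour is unnecessary, and passing to a different sequence would in any case say nothing about the weak limit of the original one. As in the paper, Step 2 uses only that $\{u_n\}\subset S_c$ is bounded with $\Upsilon_{\varepsilon,T}(u_n)\to t<\tilde{d}_{\infty,T,c}$, so it applies verbatim to any such Palais--Smale sequence, which is how the lemma is used in Lemma \ref{lem4.5}.
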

\begin{proof}
Let $\{u_{n}\}\subset S_{c}$ be a minimizing sequence of $\Upsilon_{\varepsilon,T}(u_{n})$
with respect to any $t<\tilde{d}_{\infty,T,c}<0$. Similar to the proof of Lemma \ref{lem3.1}
and Lemma \ref{lem3.2},
one can prove that  $\{\|\Delta u_{n}\|_{2}\}$ is bounded. Hence, there exist
$u\in H^{2}(\mathbb{R}^{N})$ and a subsequence of $\{u_{n}\}$, still
denoted the same, such that
$$u_{n}\rightharpoonup u_{\varepsilon} ~ \mbox{in}\ ~ H^{2}(\mathbb{R}^{N}) \quad \mbox{and}\quad
u_{n}(x)\rightarrow u_{\varepsilon}(x) ~ \mbox{a.e.\ in}\ ~ \mathbb{R}^{N}.$$
Assume to the contrary that $u_{\varepsilon}=0$. Then,
$$t+o_{n}(1)=\Upsilon_{\varepsilon,T}(u_{n})=\Upsilon_{\infty,T}(u_{n})
+\frac{1}{2}\int_{\mathbb{R}^{N}}(V(\varepsilon x)-V_{\infty})|u_{n}|^{2}dx.$$
By $(V_{2})$, for every  given $\delta>0$, there exists
$R>0$ such that
$$V(x)\geq V_{\infty}-\delta, \quad \mbox{for\ every}\ ~ |x|\geq R;$$
hence, we have
\begin{align*}
t+o_{n}(1)=&\Upsilon_{\varepsilon,T}\\
\geq&\Upsilon_{\varepsilon,T}(u_{n})+\frac{1}{2}
\int_{B_{R\setminus\varepsilon}(0)}(V(\varepsilon x)-V_{\infty})|u_{n}|^{2}dx-\frac{\delta}{2}
\int_{B_{R\setminus\varepsilon}^{c}(0)}|u_{n}|^{2}dx.
\end{align*}
Recalling that $\{u_{n}\}$ is bounded in $H^{2}(\mathbb{R}^{N})$ and $u_{n}\rightarrow0$
in $L^{2}(B_{R\setminus\varepsilon}(0))$, it follows that
\begin{equation}\label{e4.1}
t+o_{n}(1)\geq\Upsilon_{\infty,T}(u_{n})-\delta C\geq\tilde{d}_{\infty,T,c}-\delta C.
\end{equation}
We note that $\delta>0$ is arbitrary, so $t\geq\tilde{d}_{\infty,T,c}$, which
is a contradiction. Therefore, the weak limit $u$ of $\{u_{n}\}$ is indeed nontrivial.
This completes the proof of  Lemma \ref{lem4.3}.
\end{proof}

\begin{lemma}\label{lem4.4}
Let $\{u_{n}\}$ be a $(PS)_{t}$ sequence of $\Upsilon_{\varepsilon,T}$ restricted
to $S_{c}$ with $t<\tilde{d}_{\infty,T,c}$, and let $u_{\varepsilon}$ be the weak limit of
$\{u_{n}\}$ in $H^{2}(\mathbb{R}^{N})$. If $u_{n}\nrightarrow u_{\varepsilon}$ in
$H^{2}(\mathbb{R}^{N})$, then there exists $\beta>0,$ independent of $\varepsilon,$ such that
$$\liminf\limits_{n\rightarrow+\infty}\|u_{n}-u_{\varepsilon}\|_{2}\geq\beta.$$
\end{lemma}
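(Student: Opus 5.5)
We argue by contradiction, following the standard splitting argument of Alves and Thin. The key point will be that a (PS) sequence at a level below $\tilde d_{\infty,T,c}$ cannot lose mass at infinity without paying at least the energy of the problem at infinity.

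Suppose the conclusion fails. Then there are $\varepsilon_k$ and, for each $k$, a $(PS)_{t_k}$ sequence $\{u_n^k\}$ of $\Upsilon_{\varepsilon_k,T}|_{S_c}$ with $t_k<\tilde d_{\infty,T,c}$ which does not converge strongly, but whose $L^2$-distance to its weak limit satisfies $\liminf_n\|u_n^k-u_{\varepsilon_k}\|_2\to0$. Since we may fix a single sequence up to a diagonal argument, it suffices to show the following. If $\{u_n\}$ is such a sequence and $v_n:=u_n-u_\varepsilon$ satisfies $\|v_n\|_2\to0$, then $u_n\to u_\varepsilon$ in $H^2$, and the threshold $\beta$ can be chosen uniformly in $\varepsilon$.

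\textbf{Step 1 (reduction to the untruncated functional).} Since $t<\tilde d_{\infty,T,c}<0$, Lemma \ref{lem4.2}(ii) gives $\|\Delta u_n\|_2<R_0$. Hence $\eta\equiv1$ near $u_n$, and $\{u_n\}$ is a constrained $(PS)$ sequence of $\Upsilon_\varepsilon$. The Lagrange multipliers $\lambda_n=\frac1{c^2}\big(\|u_n\|_\varepsilon^2-\mu|u_n|_q^q-|u_n|_{2^{**}}^{2^{**}}\big)+o_n(1)$ are bounded, so we may pass to $\lambda_n\to\lambda_\varepsilon$. We then get
\[
\Upsilon_\varepsilon'(u_n)-\lambda_n u_n\to0 \quad\text{in } H^{-2}.
\]
Moreover, $u_\varepsilon$ solves the limit equation with multiplier $\lambda_\varepsilon$.

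\textbf{Step 2 (splitting).} We apply the Br\'ezis--Lieb lemma to $|\Delta\cdot|^2$, to $\int V(\varepsilon x)|\cdot|^2$, and to the $L^q$ and $L^{2^{**}}$ norms. The local compactness $V(\varepsilon x)v_n^2\to0$ on bounded sets gives
\[
\|v_n\|_\varepsilon^2-\lambda_\varepsilon\|v_n\|_2^2-\mu|v_n|_q^q-|v_n|_{2^{**}}^{2^{**}}=o_n(1).
\]
If $\|v_n\|_2\to0$, then by Gagliardo--Nirenberg (Lemma \ref{lem2.1}) and boundedness of $\|\Delta v_n\|_2$ we get $|v_n|_q\to0$. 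This leaves
\[
\|\Delta v_n\|_2^2\le |v_n|_{2^{**}}^{2^{**}}+o_n(1)\le S^{-2^{**}/2}\|\Delta v_n\|_2^{2^{**}}+o_n(1).
\]
So either $\|\Delta v_n\|_2\to0$, which gives strong convergence, or $\liminf\|\Delta v_n\|_2\ge S^{N/8}$. The latter contradicts $\|\Delta v_n\|_2\le 2R_0$, because $R_0<S^{N/4}$ is small by \eqref{e2.3}; if necessary we shrink $R_0$ so that $2R_0<S^{N/8}$. This is where the concentration-compactness bound of Lemma \ref{lem3.8} enters.

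\textbf{Step 3 (uniformity of $\beta$).} The constants used are $S$, $R_0$, $C_{N,q}$, $\mu$, $c$ and $|V|_\infty$, none of which depend on $\varepsilon$. Arguing quantitatively, assume $\|v_n\|_2\le\beta$. Gagliardo--Nirenberg gives $\mu|v_n|_q^q\le C\beta^{(1-\gamma_q)q}$, and we also use $|\lambda_\varepsilon|\le C$ uniformly. Plugging these into the identity of Step 2 yields
\[
\|\Delta v_n\|_2^2\big(1-S^{-2^{**}/2}R_0^{2^{**}-2}\big)\le C\beta^{\theta}+o_n(1)
\]
with some $\theta>0$. This alone does not force $\|\Delta v_n\|_2\to0$, so Step 3 is completed via an energy argument instead.

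\textbf{Main obstacle.} We expect the main difficulty to be exactly this coupling. A small but nonzero $\|v_n\|_2$ interacts with the $\lambda_\varepsilon\|v_n\|_2^2$ term, and the bound obtained above is not enough. The energy argument we would try first runs as follows. Writing $\|u_\varepsilon\|_2^2=c^2-e$ with $e=\lim\|v_n\|_2^2\in(0,\beta^2)$, the splitting gives
\[
t\ge \Upsilon_{\varepsilon,T}(u_\varepsilon)+\Upsilon_{\infty,T}\text{-type}(v_n)+o(1)\ge \tilde d_{\varepsilon,T,\sqrt{c^2-e}}+\tilde d_{\infty,T,\sqrt e}.
\]
Here $v_n\rightharpoonup0$ makes $V(\varepsilon x)$ effectively $\ge V_\infty-\delta$ on its support, as in the proof of Lemma \ref{lem4.3}. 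As $e\to0$, the right side tends to $\tilde d_{\varepsilon,T,c}$, using Lemma \ref{lem3.6} and the fact that $\tilde d_{\infty,T,\sqrt e}\to0$. The plan is to combine this with the strict subadditivity of Lemma \ref{lem3.7} to determine an explicit $\beta$ below which either a contradiction arises or $\|\Delta v_n\|_2\to0$. The difficulty is ensuring that this threshold does not degenerate with $\varepsilon$.
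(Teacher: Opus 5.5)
There is a genuine gap, and it sits in the uniformity in $\varepsilon$, which is the whole content of the lemma.

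\textbf{What your argument actually proves.} Your Step 2 only shows that, for fixed $\varepsilon$, $\|v_n\|_2\to0$ forces $v_n\to0$ in $H^2$. That gives a positive $\liminf\|v_n\|_2$ that depends on the sequence. The diagonal reduction in your first paragraph does not upgrade this, because a diagonal sequence over varying $\varepsilon_k$ is not a (PS) sequence of a single functional.

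\textbf{Why Step 3 and the energy fallback fail.} In Step 3 you treat $\lambda_\varepsilon\|v_n\|_2^2$ as a perturbation controlled by $|\lambda_\varepsilon|\le C$, and, as you note yourself, this cannot close. If $\lambda_\varepsilon$ could be positive, your Step 2 identity would be compatible with $\|\Delta v_n\|_2^2\approx\lambda_\varepsilon\|v_n\|_2^2$ with $\|v_n\|_2$ small but not tending to $0$. The energy route does not help either. For small $e$ the bound $\tilde d_{\varepsilon,T,\sqrt{c^2-e}}+\tilde d_{\infty,T,\sqrt e}$ is close to $\tilde d_{\varepsilon,T,c}\le t$. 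The subadditive bound $t\ge\tilde d_{0,T,c}+\frac{e}{c^2}(\tilde d_{\infty,T,c}-\tilde d_{0,T,c})$ is compatible with $t<\tilde d_{\infty,T,c}$ for every $e<c^2$. It only yields a contradiction when $t<\tilde d_{0,T,c}+\rho_0$ and $e\ge\beta^2$, which is exactly how Lemma~\ref{lem4.5} uses $\beta$. So that route is circular here.

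\textbf{The missing idea.} This is what the paper uses: the multiplier is negative, uniformly in $\varepsilon$. Since
\[
\Upsilon_\varepsilon(u_n)-\tfrac12\langle\Upsilon_\varepsilon'(u_n),u_n\rangle=\bigl(\tfrac12-\tfrac1q\bigr)\mu|u_n|_q^q+\bigl(\tfrac12-\tfrac1{2^{**}}\bigr)|u_n|_{2^{**}}^{2^{**}}\ge0
\]
and $\langle\Upsilon_\varepsilon'(u_n),u_n\rangle=\lambda_n c^2+o_n(1)$, you get $t\ge\frac12\lambda_\varepsilon c^2$. Hence
\[
\lambda_\varepsilon\le 2t/c^2<2\tilde d_{\infty,T,c}/c^2=:-\alpha<0,
\]
with $\alpha$ independent of $\varepsilon$. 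Now $-\lambda_\varepsilon\|v_n\|_2^2\ge\alpha\|v_n\|_2^2$ lies on the coercive side of your identity.

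\textbf{Closing the estimate.} Write $a_n=\|v_n\|_2$ and $b_n=\|\Delta v_n\|_2$.
\begin{itemize}
\item Since $b_n^2\le R_0^2+o_n(1)$, the critical term is at most $\kappa b_n^2+o_n(1)$, with $\kappa=S^{-2^{**}/2}R_0^{2^{**}-2}<1$. This is your smallness of $R_0$.
\item Gagliardo--Nirenberg plus Young, using $\gamma_q q<2<q$, give $\mu|v_n|_q^q\le\frac{1-\kappa}{2}b_n^2+Ca_n^{2+s}$, where $s=\frac{2(q-2)}{2-\gamma_q q}>0$.
\end{itemize}
Substituting both into your identity gives
\[
\tfrac{1-\kappa}{2}\,b_n^2+\bigl(\alpha-Ca_n^{s}\bigr)a_n^2\le o_n(1).
\]
So if $\liminf a_n<\beta:=(\alpha/2C)^{1/s}$, then along a subsequence $a_n,b_n\to0$, i.e.\ $v_n\to0$ in $H^2$. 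Every constant here depends only on $N,q,\mu,c,C_{N,q},S,R_0$ and $\tilde d_{\infty,T,c}$, never on $\varepsilon$.
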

\begin{proof}
By Lemma \ref{lem4.2} and $t<\tilde{d}_{\infty,T,c}<0$, we know that
$\|\Delta u_{n}\|_{2}<R_{0}$, for every sufficiently large $n$.  Consequently,
$\{u_{n}\}$ is also a $(PS)_{t}$ sequence $\Upsilon_{\varepsilon},$ constrained
to $S_{c}$, that is,
$$\Upsilon_{\varepsilon}(u_{n})\rightarrow t ~\mbox{and} ~ \|\Upsilon_{\varepsilon}|'_{S_{c}}(u_{n})\|
_{(H^{2}_{rad}(\mathbb{R}^{N}))^{*}}\rightarrow0 \quad \mbox{as}\ ~ n\rightarrow\infty,$$
where $(H^{2}_{rad}(\mathbb{R}^{N})^{*})$ is the dual space of $H^{2}_{rad}(\mathbb{R}^{N})$.
Introducing the functional 
$$\Psi:H^{2}(\mathbb{R}^{N})\rightarrow\mathbb{R},$$ defined by
$\Psi(u)=\frac{1}{2}\int_{\mathbb{R}^{N}}|u_{n}|^{2}dx$, we have
$$S_{c}=\Psi^{-1}\left(\frac{c}{2}\right). $$
By Proposition 5.12 of Willem \cite{wm},
we deduce that there exists a sequence $\{\lambda_{n}\}\subset\mathbb{R}$ satisfying
\begin{equation}\label{e4.2}
\|\Upsilon'_{\varepsilon}(u_{n})-\lambda_{n}\Psi'(u_{n})\|_{(H^{2}(\mathbb{R}^{N}))^{*}}\rightarrow0,\quad
\mbox{as}\ ~n\rightarrow+\infty.
\end{equation}
Due to the boundedness of the sequence $\{u_{n}\}$ in $H^{2}(\mathbb{R}^{N})$, we see that $u_{n}\rightharpoonup u_{\varepsilon}$,
and we let $v_{n}:=u_{n}-u_{\varepsilon}$. It follows that $\{\lambda_{n}\}$ is also bounded,
so for some subsequence, there exists $\lambda_{\varepsilon}$ such that
$\lambda_{n}\rightarrow\lambda_{\varepsilon}$, as $n\rightarrow+\infty$.
Invoking \eqref{e4.2}, we get
\begin{equation}\label{e4.3}
\begin{aligned}
&\Upsilon'_{\varepsilon}(u_{\varepsilon})-\lambda_{\varepsilon}\Psi'(u_{\varepsilon})=0 \quad\mbox{in}\
~ (H^{2}(\mathbb{R}^{N}))^{*},\\
&\|\Upsilon'_{\varepsilon}(v_{n})-\lambda_{\varepsilon}\Psi'(v_{n})\|_{(H^{2}(\mathbb{R}^{N}))^{*}}\rightarrow0,\quad  \mbox{as}\ ~ n\rightarrow\infty.
\end{aligned}
\end{equation}
By straightforward calculations, we get
$$0>\tilde{d}_{\infty,T,c}>\lim\limits_{n\rightarrow+\infty}\Upsilon_{\varepsilon}(u_{n})
=\lim\limits_{n\rightarrow+\infty}(\Upsilon_{\varepsilon}(u_{n})-\frac{1}{2}\Upsilon'_{\varepsilon}(u_{n})u_{n}
+\frac{\lambda_{n}}{2}\|u_{n}\|_{2}^{2}+o_{n}(1))\geq\frac{1}{2}\lambda_{\varepsilon}c,$$
implying that
\begin{equation}\label{e4.4}
\lambda_{\varepsilon}\leq\frac{2\tilde{d}_{\infty,T,c}}{c}<0, \quad\mbox{for\ every}\ ~ \varepsilon\in(0,\varepsilon_{0}).
\end{equation}
By \eqref{e4.3}, and the above analysis, we obtain that
$$\|v_{n}\|_{2}^{2}+\int_{\mathbb{R}^{N}}V(\varepsilon x)|v_{n}|^{2}dx-\lambda_{\varepsilon}\|v_{n}\|_{2}^{2}
=\mu\|v_{n}\|_{q}^{q}+\|v_{n}\|_{2^{**}}^{2^{**}}+o_{n}(1), $$
which combined with \eqref{e4.4}, gives
\begin{equation}\label{e4.5}
\|\Delta v_{n}\|_{2}^{2}+\int_{\mathbb{R}^{N}}V(\varepsilon x)|v_{n}|^{2}dx-\frac{2\tilde{d}_{\infty,T,c}}{c}
\|v_{n}\|_{2}^{2}\leq\|v_{n}\|_{q}^{q}+\|v_{n}\|_{2^{**}}^{2^{**}}+o_{n}(1),
\end{equation}
so, together \eqref{e4.5} with the Sobolev inequality, we have
$$C_{1}\|v_{n}\|_{\varepsilon}^{2}\leq\mu\|v_{n}\|_{q}^{q}+\|v_{n}\|_{2^{**}}^{2^{**}}+o_{n}(1)
\leq \mu C_{2}\|v_{n}\|_{\varepsilon}^{q}+C_{3}\|v_{n}\|_{\varepsilon}^{2^{**}}+o_{n}(1).$$
Since $v_{n}\nrightarrow0$ in $H^{2}(\mathbb{R}^{N})$, there exists $C_{4}$ independent
of $\varepsilon$ such that $\|v_{n}\|_{\varepsilon}\geq C_{5}$. Moreover,
\begin{equation}\label{e4.6}
\liminf\limits_{n\rightarrow+\infty}(\mu\|v_{n}\|_{\varepsilon}^{q}+\|v_{n}\|_{\varepsilon}^{2^{**}})\geq C_{4}\quad
\hbox{ for some } C_{4}>0;
\end{equation}
thus, by  \eqref{e4.6} and the Gagliardo-Nirenberg inequality, we can conclude that there exists
$\beta>0$ independent of $\varepsilon\in(0,\varepsilon_{0})$ such that
$$\liminf\limits_{n\rightarrow+\infty}\|v_{n}\|_{2}\geq\beta.$$
This completes the proof of  Lemma \ref{lem4.4}.
\end{proof}
Hereafter, we will fix $$0<\rho_{0}<\min\left\{\tilde{d}_{\infty,T,c}-\tilde{d}_{0,T,c},\frac{\beta^{2}}{c}(
\tilde{d}_{\infty,T,c}-\tilde{d}_{0,T,c})\right\}.$$
\begin{lemma}\label{lem4.5}
The energy functional $\Upsilon_{\varepsilon,T}$ satisfies the $(PS)_{t}$ condition restricted to
$S_{c},$ provided that  $t<\tilde{d}_{0,T,c}+\rho_{0}.$
\end{lemma}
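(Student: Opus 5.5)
Let $\{u_n\}\subset S_c$ be a $(PS)_t$ sequence for $\Upsilon_{\varepsilon,T}|_{S_c}$ with $t<\tilde d_{0,T,c}+\rho_0$. The aim is to show $u_n\to u_\varepsilon$ strongly in $H^2(\mathbb{R}^N)$, where $u_\varepsilon$ is the weak limit along a subsequence. Since $\rho_0<\tilde d_{\infty,T,c}-\tilde d_{0,T,c}$, we have $t<\tilde d_{\infty,T,c}<0$. By Lemma \ref{lem4.2}, $\|\Delta u_n\|_2<R_0$ for large $n$, so $\{u_n\}$ is bounded and is also a constrained $(PS)_t$ sequence for the untruncated functional $\Upsilon_\varepsilon$. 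As in the proof of Lemma \ref{lem4.4}, we obtain Lagrange multipliers $\lambda_n\to\lambda_\varepsilon<0$ satisfying \eqref{e4.3}. By Lemma \ref{lem4.3}, $u_\varepsilon\neq0$.

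We argue by contradiction and assume $u_n\nrightarrow u_\varepsilon$. Set $v_n=u_n-u_\varepsilon$ and $d_n=\|v_n\|_2$. By Lemma \ref{lem4.4}, $\liminf d_n^2\ge\beta^2$, independently of $\varepsilon$. Let $b=\|u_\varepsilon\|_2^2$; by Brézis--Lieb we get $d_n^2\to c-b$. Applying the Brézis--Lieb splitting to each term of $\Upsilon_\varepsilon$ (the quadratic term, the $L^q$ term and the $L^{2^{**}}$ term) gives
$$
t+o_n(1)=\Upsilon_\varepsilon(u_n)=\Upsilon_\varepsilon(v_n)+\Upsilon_\varepsilon(u_\varepsilon)+o_n(1).
$$
Since $\|\Delta u_\varepsilon\|_2,\|\Delta v_n\|_2<R_0$, asymptotically $\eta\equiv1$ on both pieces, so $\Upsilon_\varepsilon=\Upsilon_{\varepsilon,T}$ there.

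The two pieces are estimated separately.
\begin{itemize}
\item[(a)] Because $v_n\rightharpoonup0$, the argument of Lemma \ref{lem4.3} (splitting at $|x|=R/\varepsilon$ and using $V\ge V_\infty-\delta$ outside) gives $\Upsilon_{\varepsilon,T}(v_n)\ge\Upsilon_{\infty,T}(v_n)-\delta C+o_n(1)\ge\tilde d_{\infty,T,d_n}-\delta C$.
\item[(b)] Since $V\ge0$, we have $\Upsilon_{\varepsilon,T}(u_\varepsilon)\ge\Upsilon_{0,T}(u_\varepsilon)\ge\tilde d_{0,T,b}$.
\end{itemize}
Lemma \ref{lem3.7} and its limiting form, together with the continuity from Lemma \ref{lem3.6}, give the linear bound $\tilde d_{V_1,T,s}\ge\frac{s}{c}\tilde d_{V_1,T,c}$ for $s\le c$. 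Hence
$$
t\ \ge\ \frac{c-b}{c}\,\tilde d_{\infty,T,c}+\frac{b}{c}\,\tilde d_{0,T,c}-\delta C
=\tilde d_{0,T,c}+\frac{c-b}{c}\bigl(\tilde d_{\infty,T,c}-\tilde d_{0,T,c}\bigr)-\delta C .
$$
Using $c-b\ge\beta^2$ and letting $\delta\to0$ yields $t\ge\tilde d_{0,T,c}+\frac{\beta^2}{c}(\tilde d_{\infty,T,c}-\tilde d_{0,T,c})>\tilde d_{0,T,c}+\rho_0$. This contradicts the hypothesis on $t$. (The normalization of the mass, $c$ versus $c^2$, should be kept consistent with the way $\rho_0$ is defined.)

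It follows that $b=c$, so $u_n\to u_\varepsilon$ in $L^2$. Interpolation with the $H^2$ bound then gives convergence in $L^q$. Lemma \ref{lem3.8}'s concentration argument, where $\|\Delta u_n\|_2<R_0<S^{N/8}$ rules out atoms and mass at infinity, gives convergence in $L^{2^{**}}$. Testing \eqref{e4.3} with $v_n$ then yields $\|\Delta v_n\|_2^2+\int V(\varepsilon x)v_n^2-\lambda_\varepsilon\|v_n\|_2^2\to0$. Since $\lambda_\varepsilon<0$, this gives $v_n\to0$ in $H^2$.

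The main obstacle is step (a): making the energy splitting rigorous with the truncation $\eta$, and controlling the potential term on the escaping part uniformly. Handling the potential term rigorously requires checking that the $\|\Delta\cdot\|_2$ norms of both pieces stay below $R_0$, which holds because $\|\Delta u_\varepsilon\|_2^2+\|\Delta v_n\|_2^2=\|\Delta u_n\|_2^2+o_n(1)$.
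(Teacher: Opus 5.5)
Your proof is correct and follows the paper's argument essentially step by step: you split $\Upsilon_{\varepsilon,T}(u_n)=\Upsilon_{\varepsilon,T}(v_n)+\Upsilon_{\varepsilon,T}(u_\varepsilon)+o_n(1)$, bound the vanishing part below by $\tilde d_{\infty,T,\|v_n\|_2^2}$ via the argument of Lemma~\ref{lem4.3}, bound the weak limit below by $\tilde d_{0,T,b}$, apply the linear scaling bound from Lemma~\ref{lem3.7}, and reach a contradiction with the choice of $\rho_0$ using $c-b\ge\beta^2$ from Lemma~\ref{lem4.4}. Your closing paragraph, which re-derives $H^2$ convergence, is harmless but redundant, because the contradiction with Lemma~\ref{lem4.4} already gives $u_n\to u_\varepsilon$ in $H^2(\mathbb{R}^N)$; also, as you note yourself, the mass index in (a) should be $d_n^2$ rather than $d_n$ to be consistent with $c=b+e$.
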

\begin{proof}
Let $\{u_{n}\}\subset S_{c}$ be a $(PS)_{t}$ sequence of $\Upsilon_{\varepsilon,T}$
restricted to $S_{c}$. Noting that $t<\tilde{d}_{\infty,,T,c}<0$, by Lemma \ref{lem4.2},
$\{u_{n}\}$ is bounded in $H^{2}(\mathbb{R}^{N})$. Taking the same argument as in the proof of Lemma \ref{lem4.3}, we can show that $u_{n}\rightharpoonup u_{\varepsilon}$
in $H^{2}(\mathbb{R}^{N})$ and $u_{\varepsilon}\not\equiv0$.
A straightforward computation gives that $v_{n}:=u_{n}-u_{\varepsilon}$ is a
$(PS)_{t'}$ sequence of $\Upsilon_{\varepsilon,T}$ restricted to $S_{c}$
and $t'<t$. If $v_{n}\nrightarrow0$ in $H^{2}(\mathbb{R}^{N}),$
then, by Lemma \ref{lem4.4},
$$\liminf\limits_{n\rightarrow+\infty}\|v_{n}\|_{2}\geq\beta.$$

Setting $b=\|u_{\varepsilon}\|_{2}^{2}$, $e_{n}=\|v_{n}\|_{2}^{2}$
and supposing that $\|v_{n}\|\rightarrow e$,
 we get $e\geq\beta^{2}>0$ and
$c=b+e$.
Since $v_{n}\rightharpoonup0,$  we can use a similar proof as for
\eqref{e4.1} to obtain that $\Upsilon_{\varepsilon,T}(v_{n})\geq\tilde{d}_{\infty,T,c}+o_{n}(1)$.
For every $e_{n}\in(0,c)$ for sufficiently large $n$, we have
\begin{equation}\label{e4.7}
t+o_{n}(1)=\Upsilon_{\varepsilon,T}(u_{n})\geq\Upsilon_{\varepsilon,T}(v_{n})
+\Upsilon_{\varepsilon,T}(u_{\varepsilon})\geq\tilde{d}_{\infty,T,e_{n}}+\tilde{d}_{0,T,b}+o_{n}(1).
\end{equation}
By Lemma \ref{lem3.7} and \eqref{e4.7}, we obtain that
$$\tilde{d}_{0,T,c}+\rho_{0}\geq t+o_{n}(1)\geq\frac{e_{n}}{c}\tilde{d}_{\infty,T,c}
+\frac{b}{c}\tilde{d}_{0,T,c}.$$
Taking the limits as $n\rightarrow+\infty$, we get
$$\rho_{0}\geq\frac{d}{c}(\tilde{d}_{\infty,T,c}-\tilde{d}_{0,T,c})\geq
\frac{\beta^{2}}{c}(\tilde{d}_{\infty,T,c}-\tilde{d}_{0,T,c}),$$
which contradicts $\rho_{0}<\frac{\beta}{c}(\tilde{d}_{\infty,T,c}-\tilde{d}_{0,T,c}),$
so we must have $u_{n}\rightarrow u_{\varepsilon}$ in $H^{2}(\mathbb{R}^{N})$.
This completes the proof of Lemma \ref{lem4.5}.
\end{proof}

\section{Multiplicity of Solutions}\label{S5}
In this section, we will establish the multiplicity of solutions of system
\eqref{e1.1}, invoking an argument from Alves  \cite{ac}.
To this end, fix $\tilde{\rho}$, $\tilde{r}>0$ such that
\begin{itemize}
  \item $\overline{B_{\tilde{\rho}}(k_{i})}\cap\overline{B_{\tilde{\rho}}(k_{j})}=\emptyset$  for
  $i,j$ and $k_{i}$, $k_{j}$ defined in $(V_{3})$;
  \item $\bigcup_{i=1}^{l}B_{\tilde{\rho}}(k_i)\subset B_{\tilde{r}}(0)$;
  \item $K_{\frac{\tilde{\rho}}{2}}=\bigcup_{i=1}^{l}\overline{B_{\frac{\tilde{\rho}}{2}}(k_{i})}.$
\end{itemize}
Define the function $\mathcal{Q}_{\varepsilon}:H^{2}(\mathbb{R}^{N})\setminus\{0\}\rightarrow
\mathbb{R}^{N}$ by
$$\mathcal{Q}_{\varepsilon}(u):=\frac{\int_{\mathbb{R}^{N}}\chi(\varepsilon x)|u|^{2}dx}{\int_{\mathbb{R}^{N}}|u|^{2}dx},$$
where $\chi:\mathbb{R}^{N}\rightarrow\mathbb{R}^{N}$ is given by
\begin{equation*}
\chi(x):=
\left\{
\begin{array}{lll}
x &\mbox{if}\ |x|\leq\tilde{r},\\
\tilde{r}\frac{x}{|x|} &\mbox{if}\ |x|>\tilde{r}.
\end{array}
\right.
\end{equation*}
The following two lemmas will be instrumental in generating $(PS)$ sequences
for $\Upsilon_{\varepsilon,T}$ on the constraints of $S_{c}$.

\begin{lemma}\label{lem4.6}
There exist $\varepsilon_{1}\in(0,\varepsilon_{0}]$ and $\rho_{1}\in(0,\rho_{1}]$
such that if $\varepsilon\in(0,\varepsilon_{1})$, $u\in S_{c}$, and $\Upsilon_{\varepsilon,T}
(u)\leq\tilde{d}_{0,T,c}+\rho_{1}$, then
$\mathcal{Q}_{\varepsilon}(u)\in K_{\frac{\tilde{\rho}}{2}}$.
\end{lemma}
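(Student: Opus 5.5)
We argue by contradiction, following the scheme of Alves \cite{ac}. If the statement fails, there are sequences $\rho_{n}\to0^{+}$, $\varepsilon_{n}\to0^{+}$ and $u_{n}\in S_{c}$ such that
$$\Upsilon_{\varepsilon_{n},T}(u_{n})\leq\tilde{d}_{0,T,c}+\rho_{n}\quad\mbox{and}\quad\mathcal{Q}_{\varepsilon_{n}}(u_{n})\notin K_{\frac{\tilde{\rho}}{2}}.$$

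Since $V\geq0$, we have $\Upsilon_{0,T}(u_{n})\leq\Upsilon_{\varepsilon_{n},T}(u_{n})$. This gives $\tilde{d}_{0,T,c}\leq\Upsilon_{0,T}(u_{n})\leq\tilde{d}_{0,T,c}+\rho_{n}$, so $\{u_{n}\}$ is a minimizing sequence for $\tilde{d}_{0,T,c}$. It also gives
$$\int_{\mathbb{R}^{N}}V(\varepsilon_{n}x)|u_{n}|^{2}dx\to0.$$
Lemma \ref{lem3.9}, applied with $V_{1}=0$, then yields a sequence $\{y_{n}\}\subset\mathbb{R}^{N}$ (possibly $y_{n}=0$) such that $w_{n}:=u_{n}(\cdot+y_{n})\to w$ strongly in $H^{2}(\mathbb{R}^{N})$, with $w\in S_{c}$ and $w\neq0$.

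The key step is to locate $\varepsilon_{n}y_{n}$. First I would show that $\{\varepsilon_{n}y_{n}\}$ is bounded. If instead $|\varepsilon_{n}y_{n}|\to\infty$ along a subsequence, then by $(V_{2})$ and Fatou's lemma
$$0=\lim_{n}\int_{\mathbb{R}^{N}}V(\varepsilon_{n}x+\varepsilon_{n}y_{n})|w_{n}|^{2}dx\geq V_{\infty}\int_{\mathbb{R}^{N}}|w|^{2}dx=V_{\infty}c^{2}>0,$$
which is a contradiction. An alternative route is to compare $\Upsilon_{\varepsilon_{n},T}(u_{n})$ with $\tilde{d}_{\infty,T,c}>\tilde{d}_{0,T,c}$, as in the proof of Lemma \ref{lem4.3}.

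Hence, up to a subsequence, $\varepsilon_{n}y_{n}\to y$. Fatou's lemma together with the continuity of $V$ gives $V(y)\int_{\mathbb{R}^{N}}|w|^{2}dx\leq0$, so $V(y)=0$. By $(V_{3})$ this means $y=k_{i}$ for some $i$.

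To conclude, write
$$\mathcal{Q}_{\varepsilon_{n}}(u_{n})=\frac{\int_{\mathbb{R}^{N}}\chi(\varepsilon_{n}x+\varepsilon_{n}y_{n})|w_{n}|^{2}dx}{\int_{\mathbb{R}^{N}}|w_{n}|^{2}dx}.$$
The map $\chi$ is bounded by $\tilde{r}$ and continuous, $w_{n}\to w$ in $L^{2}$, and $\chi(\varepsilon_{n}x+\varepsilon_{n}y_{n})\to\chi(k_{i})=k_{i}$ pointwise, using $|k_{i}|<\tilde{r}$. Dominated convergence, after splitting off the $L^{2}$-small tails of $w_{n}$, therefore gives $\mathcal{Q}_{\varepsilon_{n}}(u_{n})\to k_{i}$. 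Consequently $\mathcal{Q}_{\varepsilon_{n}}(u_{n})\in B_{\tilde{\rho}/2}(k_{i})\subset K_{\frac{\tilde{\rho}}{2}}$ for $n$ large, contradicting the choice of $u_{n}$. This proves the statement with suitable $\varepsilon_{1}$ and $\rho_{1}$ (the latter can in addition be taken at most $\rho_{0}$).

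I expect the main obstacle to be the compactness step, namely obtaining strong convergence of the translates $w_{n}$ via Lemma \ref{lem3.9}. That lemma needs $\tilde{d}_{0,T,c}<0$, which holds by Lemma \ref{lem3.2}, and it rests on the concentration-compactness argument of Lemma \ref{lem3.8} to exclude loss of mass to the critical term. Once strong $L^{2}$ convergence of the translates is available, the rest is routine dominated convergence.
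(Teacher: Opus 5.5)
Your proof is correct. It shares the paper's skeleton: argue by contradiction, observe that $\{u_n\}$ is minimizing for $\tilde{d}_{0,T,c}$, get compactness up to translation from Lemma \ref{lem3.9}, locate $\varepsilon_n y_n$, and pass to the limit in $\mathcal{Q}_{\varepsilon_n}(u_n)$. Where you differ is in how you locate $\varepsilon_n y_n$.

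The paper passes to the limit in the full energy. If $|\varepsilon_n y_n|\to\infty$, it gets $\Upsilon_{\varepsilon_n,T}(u_n)\to\Upsilon_{\infty,T}(v)$, hence $\tilde{d}_{\infty,T,c}\le\tilde{d}_{0,T,c}$, contradicting the strict gap between these levels; this is exactly the ``alternative route'' you mention. If $\varepsilon_n y_n\to y$, it gets $\tilde{d}_{V(y),T,c}\le\tilde{d}_{0,T,c}$ and uses the strict monotonicity of Corollary \ref{cor3.1} to force $V(y)=0$.

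You instead note that the truncation depends only on $\|\Delta u\|_2$. Hence $\Upsilon_{\varepsilon,T}(u)-\Upsilon_{0,T}(u)$ is exactly $\frac12\int_{\mathbb{R}^N}V(\varepsilon x)|u|^2dx$, and the energy bound gives $\int_{\mathbb{R}^N}V(\varepsilon_n x)|u_n|^2dx\le2\rho_n\to0$. Fatou's lemma then excludes $|\varepsilon_n y_n|\to\infty$ (via $V_\infty>0$) and yields $V(y)=0$ directly.

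Your argument is more elementary. For this lemma it needs neither attainment of $\tilde{d}_{V_1,T,c}$ for $V_1>0$ (Lemma \ref{lem3.10}), nor Corollary \ref{cor3.1}, nor the inequality $\tilde{d}_{0,T,c}<\tilde{d}_{\infty,T,c}$. It also handles both alternatives of Lemma \ref{lem3.9} at once by allowing $y_n=0$, which makes clear that in the first alternative the limit is $\chi(0)=k_1$. The paper's route, in turn, simply reuses the level-comparison machinery it needs anyway, e.g.\ for the choice of $\rho_0$ in Lemma \ref{lem4.5}.
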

\begin{proof}
Suppose, to the contrary, that the conclusion fails. Then, there exist sequences $\rho_{n}\rightarrow0$, $\varepsilon_{n}\rightarrow0$
and $\{u_{n}\}\subset S_{c}$ such that
\begin{equation}\label{e4.8}
\Upsilon_{\varepsilon_{n},T}(u_{n})\leq\tilde{d}_{0,T,c}+\rho_{n},\quad \mathcal{Q}_{\varepsilon_{n}}
(u_{n})\notin K_{\frac{\tilde{\rho}}{2}}.
\end{equation}
Consequently, we have
$$\tilde{d}_{0,T,c}\leq\Upsilon_{0,T}(u_{n})\leq\Upsilon_{\varepsilon_{n},T}(u_{n})
\leq\tilde{d}_{0,T,c}+\rho_{n}.$$
So, $\{u_{n}\}\subset S_{c}$ and $\Upsilon_{0,T}(u_{n})\rightarrow\tilde{d}_{0,T,c}$.
By Lemma \ref{lem3.9}, we need to consider two cases.

Case (i) $u_{n}\rightarrow u$ in $H_{\varepsilon}(\mathbb{R}^{N})$, for some $u\in S_{c}$.

Case (ii) There exists $\{y_{n}\}\subset \mathbb{R}^{N}$ with $|y_{n}|\rightarrow+\infty$
such that $v_{n}(x)=u_{n}(x+y_{n})$ converges to some $v\in S_{c}$ in $H^{2}(\mathbb{R}^{N})$.

Analysis of (i).
Applying the Lebesgue dominated convergence theorem, it follows that
$$\mathcal{Q}_{\varepsilon_{n}}(u_{n})=\frac{\int_{\mathbb{R}^{N}}\chi(\varepsilon_{n}x)|u_{n}|^{2}dx}
{\int_{\mathbb{R}^{N}}|u_{n}|^{2}dx}\rightarrow\frac{\int_{\mathbb{R}^{N}}\chi(0)|u|^{2}dx}
{\int_{\mathbb{R}^{N}}|u|^{2}dx}=k_{i}\in K_{\frac{\tilde{\rho}}{2}},$$
which contradicts $\mathcal{Q}_{\varepsilon_{n}}\notin K_{\frac{\tilde{\rho}}{2}}$.

Analysis of (ii).
We need to examine two possibilities:

(I) $ |\varepsilon_{n}y_{n}|\rightarrow+\infty$,

(II) $\varepsilon_{n}y_{n}\rightarrow y, ~ \mbox{for\ some}\ ~ y\in\mathbb{R}^{N}.$

If (I) holds, then, in view of $v_{n}\rightarrow v$ in $H^{2}(\mathbb{R}^{N})$, we obtain
\begin{eqnarray}\label{e4.9}
\Upsilon_{\varepsilon_{n},T}(u_{n}) &=&\nonumber
\frac{1}{2}\|\Delta v_{n}\|_{2}^{2}+\frac{1}{2}\int_{\mathbb{R}^{N}}V(\varepsilon_{n}x+\varepsilon_{n}y_{n})
|v_{n}|^{2}dx-\frac{\mu}{q}\|v_{n}\|_{q}^{q}\\
&&-\frac{\eta\|\Delta v_{n}\|_{2}}{2^{**}}\|\Delta v_{n}\|_{2^{**}}^{2^{**}}\rightarrow\Upsilon_{\infty,T}(v).
\end{eqnarray}
Since $\Upsilon_{\varepsilon_{n},T}(u_{n})\leq\tilde{d}_{0,T,c}+\rho_{n}$, we conclude that
$\tilde{d}_{\infty,T,c}\leq\Upsilon_{\infty,T}(v)\leq\tilde{d}_{0,T,c}$, which contradicts
$\tilde{d}_{\infty,T,c}>\tilde{d}_{0,T,c}.$

If (II) holds, similar to \eqref{e4.9}, $\Upsilon_{\varepsilon_{n},T}(u_{n})\rightarrow\Upsilon
_{y,T}(v)$, which combined with $\Upsilon_{\varepsilon_{n},T}(u_{n})\leq\tilde{d}_{0,T,c}+\rho_{n}$
implies that $\tilde{d}_{y,T,c}\leq\Upsilon_{y,T}(v)\leq\tilde{d}_{0,T,c}$. According to Corollary \ref{cor3.1},
it follows that $V(y)=V_{0}$ and $y=k_{i}$ for some $i=1,2,\cdots,l$. Consequently,
$$\mathcal{Q}_{\varepsilon_{n}}(u_{n})=\frac{\int_{\mathbb{R}^{N}}\chi(\varepsilon_{n}x+\varepsilon_{n}y_{n})|v_{n}|^{2}dx}
{\int_{\mathbb{R}^{N}}|v_{n}|^{2}dx}\rightarrow\frac{\int_{\mathbb{R}^{N}}\chi(y)|v|^{2}dx}
{\int_{\mathbb{R}^{N}}|v|^{2}dx}=k_{i}\in K_{\frac{\tilde{\rho}}{2}},$$
which implies that $\mathcal{Q}_{\varepsilon_{n}}(u_{n})\in K_{\frac{\tilde{\rho}}{2}}$ for sufficiently large $n$
and therefore contradicts with \eqref{e4.8}.
This completes the proof of Lemma \ref{lem4.6}.
\end{proof}
For convenience, we will use the following notations:
\begin{itemize}
  \item $\theta_{\varepsilon}^{i}:=\{u\in S_{c}:|\mathcal{Q}_{\varepsilon}(u)-k_{i}|<\tilde{\rho}\}$,
  $\partial\theta_{\varepsilon}^{i}:=\{u\in S_{c}:|\mathcal{Q}_{\varepsilon}(u)-k_{i}|=\tilde{\rho}\}$.
  \item $\beta_{\varepsilon}^{i}:=\inf\limits_{u\in\theta_{\varepsilon}^{i}}\Upsilon_{\varepsilon,T}(u)$,
  $\tilde{\beta}_{\varepsilon}^{i}:=\inf\limits_{u\in\partial\theta_{\varepsilon}^{i}}\Upsilon_{\varepsilon,T}(u)$.
\end{itemize}
\begin{lemma}\label{lem4.7}
There exists $\varepsilon_{2}\in(0,\varepsilon_{1}]$ such that
\begin{equation}\label{e4.10}
\beta_{\varepsilon}^{i}<\tilde{d}_{0,T,c}+\frac{\rho_{1}}{2} ~\mbox{and}\ ~
\beta_{\varepsilon}^{i}<\theta_{\varepsilon}^{i}, \quad \mbox{for\ every}\ ~ \varepsilon\in(0,\varepsilon_{2}).
\end{equation}
\end{lemma}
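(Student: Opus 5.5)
The statement contains an evident typo: the second inequality should read $\beta_{\varepsilon}^{i}<\tilde{\beta}_{\varepsilon}^{i}$, i.e. the infimum over $\theta_{\varepsilon}^{i}$ is strictly below the infimum over its boundary $\partial\theta_{\varepsilon}^{i}$. I will prove that version. The plan has two independent parts: an upper bound on $\beta_{\varepsilon}^{i}$ obtained from a translated minimizer of the autonomous problem, and a lower bound on $\tilde{\beta}_{\varepsilon}^{i}$ obtained from Lemma \ref{lem4.6}.

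\textbf{Upper bound.} Let $u_{0}\in S_{c}$ be the minimizer of $\Upsilon_{0,T}$ with $\Upsilon_{0,T}(u_{0})=\tilde{d}_{0,T,c}$, as provided by Lemma \ref{lem3.10}. For each $i$, I set
$$u_{\varepsilon}^{i}(x):=u_{0}\bigl(x-\tfrac{k_{i}}{\varepsilon}\bigr)\in S_{c}.$$
Changing variables gives
$$\Upsilon_{\varepsilon,T}(u_{\varepsilon}^{i})=\tfrac12\|\Delta u_{0}\|_{2}^{2}+\tfrac12\int_{\mathbb{R}^{N}}V(\varepsilon x+k_{i})|u_{0}|^{2}dx-\tfrac{\mu}{q}\|u_{0}\|_{q}^{q}-\tfrac{\eta(\|\Delta u_{0}\|_{2})}{2^{**}}\|u_{0}\|_{2^{**}}^{2^{**}}.$$
Since $V$ is bounded and continuous with $V(k_{i})=0$, dominated convergence yields $\Upsilon_{\varepsilon,T}(u_{\varepsilon}^{i})\to\tilde{d}_{0,T,c}$ as $\varepsilon\to0^{+}$, exactly as in Lemma \ref{lem4.1}.

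Next I check that $u_{\varepsilon}^{i}$ lies in $\theta_{\varepsilon}^{i}$. By the same change of variables,
$$\mathcal{Q}_{\varepsilon}(u_{\varepsilon}^{i})=c^{-1}\int_{\mathbb{R}^{N}}\chi(\varepsilon x+k_{i})|u_{0}|^{2}dx\to\chi(k_{i})=k_{i},$$
where the last equality uses $|k_{i}|<\tilde r$. Hence $|\mathcal{Q}_{\varepsilon}(u_{\varepsilon}^{i})-k_{i}|<\tilde\rho$ for small $\varepsilon$, so $u_{\varepsilon}^{i}\in\theta_{\varepsilon}^{i}$. Since $l$ is finite, I can choose $\varepsilon_{2}\in(0,\varepsilon_{1}]$ so that for every $i$ and every $\varepsilon<\varepsilon_{2}$,
$$\beta_{\varepsilon}^{i}\le\Upsilon_{\varepsilon,T}(u_{\varepsilon}^{i})<\tilde{d}_{0,T,c}+\tfrac{\rho_{1}}{2}.$$
This is the first inequality.

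\textbf{Lower bound on the boundary.} Take $u\in\partial\theta_{\varepsilon}^{i}$, so $|\mathcal{Q}_{\varepsilon}(u)-k_{i}|=\tilde\rho$. The balls $\overline{B_{\tilde\rho}(k_j)}$ are pairwise disjoint, so a point at distance exactly $\tilde\rho$ from $k_{i}$ is outside $\overline{B_{\tilde\rho/2}(k_{i})}$. It is also outside $\overline{B_{\tilde\rho/2}(k_{j})}$ for $j\ne i$, because $\overline{B_{\tilde\rho/2}(k_{j})}\subset B_{\tilde\rho}(k_j)$ is disjoint from $\overline{B_{\tilde\rho}(k_i)}$. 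Therefore $\mathcal{Q}_{\varepsilon}(u)\notin K_{\tilde\rho/2}$. By the contrapositive of Lemma \ref{lem4.6}, valid since $\varepsilon<\varepsilon_{2}\le\varepsilon_{1}$, we get $\Upsilon_{\varepsilon,T}(u)>\tilde{d}_{0,T,c}+\rho_{1}$. Taking the infimum over $\partial\theta_{\varepsilon}^{i}$ gives
$$\tilde\beta_{\varepsilon}^{i}\ge\tilde{d}_{0,T,c}+\rho_{1}>\tilde{d}_{0,T,c}+\tfrac{\rho_1}{2}>\beta_{\varepsilon}^{i},$$
which is the second inequality.

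\textbf{Main obstacle.} The delicate step is the geometric separation used in the boundary estimate: one must verify that a point on the sphere $\partial B_{\tilde\rho}(k_{i})$ stays away from all the sets $\overline{B_{\tilde\rho/2}(k_{j})}$ for $j\neq i$, which is where the disjointness of the closed balls $\overline{B_{\tilde\rho}(k_j)}$ is needed. The other point requiring care is that the convergence of $\mathcal{Q}_{\varepsilon}(u_{\varepsilon}^{i})$ to $k_{i}$ must hold uniformly in $i$, which is automatic because there are only finitely many wells.
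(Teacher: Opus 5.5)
Your proof is correct and follows the paper's argument step by step. The translated autonomous minimizer $u_0(\cdot-k_i/\varepsilon)$ gives the upper bound $\beta_\varepsilon^i<\tilde{d}_{0,T,c}+\rho_1/2$, and the contrapositive of Lemma \ref{lem4.6} gives $\tilde{\beta}_\varepsilon^i\ge\tilde{d}_{0,T,c}+\rho_1$ on $\partial\theta_\varepsilon^i$; you are right that the second inequality should read $\beta_\varepsilon^i<\tilde{\beta}_\varepsilon^i$. Your explicit check that the sphere $\partial B_{\tilde\rho}(k_i)$ misses $K_{\frac{\tilde\rho}{2}}$ is a useful addition, since the paper only asserts it. The one nit is that with $|u|_2=c$ the normalizing factor in $\mathcal{Q}_\varepsilon(u_\varepsilon^i)$ is $c^{-2}$ rather than $c^{-1}$, which does not affect the argument.
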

\begin{proof}
In what follows, let $u\in S_{c}$ satisfy $\Upsilon_{0,T}(u)=\tilde{d}_{0,T,c}$.
For $i\leq i\leq l$, we define the function $\hat{u}_{\varepsilon}^{i}:\mathbb{R}^{N}\rightarrow\mathbb{R}$ as
$$\hat{u}_{\varepsilon}^{i}(\cdot):=u\left(\cdot-\frac{k_{i}}{\varepsilon}\right).$$
Therefore, $\hat{u}_{\varepsilon}^{i}\in S_{c}$, for every $\varepsilon>0$ and
$i\leq i\leq k$. By a straightforward change of variable,
it can be shown that
$$\Upsilon_{\varepsilon,T}(\hat{u}_{\varepsilon}^{i})=\frac{1}{2}\|\Delta u\|_{2}^{2}+\frac{1}{2}\int_{\mathbb{R}^{N}}V(\varepsilon_{n}x+k_{i})
|u|^{2}dx-\frac{\mu}{q}\|u\|_{q}^{q}-\frac{\eta\|\Delta u\|_{2}}{2^{**}}\|\Delta u\|_{2^{**}}^{2^{**}}$$
and
\begin{equation}\label{e4.11}
\lim\limits_{\varepsilon\rightarrow0^{+}}\Upsilon_{\varepsilon,T}(\hat{u}_{\varepsilon}^{i})=
\Upsilon_{k_{i},T}(u)=\Upsilon_{0,T}=\tilde{d}_{0,T,c}.
\end{equation}
Note that, as $\varepsilon\rightarrow0^{+}$, $\mathcal{Q}_{\varepsilon}(\hat{u}_{\varepsilon}^{i})
\rightarrow k_{i}$, this implies that $\hat{u}_{\varepsilon}^{i}\in\mathcal{Q}_{\varepsilon}^{i}$
when $\varepsilon$ is sufficiently small. According to \eqref{e4.11}, there exists $\varepsilon_{2}\in(0,\varepsilon_{1}]$
such that
$$\beta_{\varepsilon}^{i}<\tilde{d}_{0,T,c}+\frac{\rho_{1}}{2}, \quad \mbox{for\ every}\ ~
\varepsilon\in(0,\varepsilon_{2}),$$
thus proving the first inequality in \eqref{e4.10}.

For any $v\in\partial\beta_{\varepsilon}^{i}$, we can conclude that
$\mathcal{Q}_{\varepsilon}(v)\notin K_{\frac{\tilde{\rho}}{2}}$. Thus,
by Lemma \ref{lem4.6}, one has
$$\Upsilon_{\varepsilon,T}(v)>\tilde{d}_{0,T,c}+\rho_{1},\quad \mbox{for\ every} ~ v\in\partial\theta_{\varepsilon}^{i}
~\mbox{and}\ ~ \varepsilon\in(0,\varepsilon_{2}).$$
This implies that $$\tilde{\beta}_{\varepsilon}^{i}:=\inf\limits_
{u\in\partial\theta_{\varepsilon}^{i}}\Upsilon_{\varepsilon,T}(v)\geq\tilde{d}_{0,T,c}
+\rho_{1},\quad \text{for every } \varepsilon\in(0,\varepsilon_{2}).$$ Consequently,
$$\beta_{\varepsilon}^{i}<\beta_{\varepsilon}, ~\mbox{for\ every} ~ \varepsilon\in(0,\varepsilon_{2}).$$
This completes the proof of Lemma \ref{lem4.7}.
\end{proof}

\begin{proposition}\label{pro4.1}
Let $\varepsilon\in(0,\tilde{\varepsilon})$ be fixed. Then decreasing $\tilde{\varepsilon}>0$
if necessary, $\Upsilon_{\varepsilon}|_{S_{c}}$ has at least $k$ different nontrivial critical points.
\end{proposition}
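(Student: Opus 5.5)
The plan is to produce, for each $i\in\{1,\dots,l\}$, one critical point of $\Upsilon_{\varepsilon,T}|_{S_c}$ inside the set $\theta_\varepsilon^i$, using Ekeland's variational principle. Because the sets $\theta_\varepsilon^i$ are pairwise disjoint, these critical points are distinct. Lemma \ref{lem4.2} then shows that they are critical points of $\Upsilon_\varepsilon|_{S_c}$.

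Fix $\varepsilon\in(0,\tilde\varepsilon)$ with $\tilde\varepsilon:=\varepsilon_2$ from Lemma \ref{lem4.7}. Since $\overline{B_{\tilde\rho}(k_i)}$ are pairwise disjoint, so are the sets $\overline{\theta_\varepsilon^i}=\theta_\varepsilon^i\cup\partial\theta_\varepsilon^i$. Apply Ekeland's variational principle to $\Upsilon_{\varepsilon,T}$ on the complete metric space $\overline{\theta_\varepsilon^i}$, which is closed in $S_c$ because $\mathcal{Q}_\varepsilon$ is continuous on $H^2\setminus\{0\}$. This gives a minimizing sequence $\{u_n\}\subset\overline{\theta_\varepsilon^i}$ with $\Upsilon_{\varepsilon,T}(u_n)\to\beta_\varepsilon^i$ and
$$\Upsilon_{\varepsilon,T}(w)\geq\Upsilon_{\varepsilon,T}(u_n)-\tfrac1n\|w-u_n\|\quad\text{for all } w\in\overline{\theta_\varepsilon^i}.$$
By Lemma \ref{lem4.7}, $\beta_\varepsilon^i<\tilde d_{0,T,c}+\rho_1/2<\tilde d_{0,T,c}+\rho_1\le\tilde\beta_\varepsilon^i$. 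Hence for large $n$ the point $u_n$ lies in the relatively open set $\theta_\varepsilon^i$ and not on $\partial\theta_\varepsilon^i$. Small perturbations along curves in $S_c$ (tangent directions, then projection to the sphere) therefore stay in $\overline{\theta_\varepsilon^i}$. The Ekeland inequality then gives $\|\Upsilon_{\varepsilon,T}|_{S_c}'(u_n)\|\to0$, so $\{u_n\}$ is a $(PS)_{\beta_\varepsilon^i}$ sequence for $\Upsilon_{\varepsilon,T}|_{S_c}$.

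Since $\beta_\varepsilon^i<\tilde d_{0,T,c}+\rho_1/2$, we may take $\rho_1\le\rho_0$; then $\beta_\varepsilon^i<\tilde d_{0,T,c}+\rho_0$, and Lemma \ref{lem4.5} yields $u_n\to u^i$ in $H^2(\mathbb{R}^N)$ along a subsequence. We have $u^i\in S_c$ and $\Upsilon_{\varepsilon,T}(u^i)=\beta_\varepsilon^i$. Because $\Upsilon_{\varepsilon,T}(u^i)<\tilde\beta_\varepsilon^i$, the limit $u^i$ lies in $\theta_\varepsilon^i$, and by continuity of the derivative it is a constrained critical point. The value $\beta^i_\varepsilon$ is negative, since it is below $\tilde d_{0,T,c}+\rho_0<\tilde d_{\infty,T,c}<0$. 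So Lemma \ref{lem4.2}(ii) shows that $\Upsilon_{\varepsilon,T}=\Upsilon_\varepsilon$ in a neighborhood of $u^i$, and $u^i$ is a critical point of $\Upsilon_\varepsilon|_{S_c}$. The Lagrange multiplier argument of Lemma \ref{lem4.4} gives the associated $\lambda^i_\varepsilon<0$. Finally, the $u^i$ are nontrivial because they lie in $S_c$, and they are pairwise distinct because $\mathcal{Q}_\varepsilon(u^i)\in B_{\tilde\rho}(k_i)$ and these balls are disjoint. This gives $l$ (the paper's $k$) distinct critical points.

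I expect the main obstacle to be the Ekeland step: I must check that the $(PS)$ sequence is a $(PS)$ sequence for the constrained derivative on $S_c$ and not merely for the restriction to $\theta_\varepsilon^i$. The strict gap $\beta_\varepsilon^i<\tilde\beta_\varepsilon^i$ is exactly what keeps $u_n$ a positive distance from $\partial\theta_\varepsilon^i$ in terms of energy. To get that $u_n$ is also interior in terms of the metric, I would combine Ekeland's inequality with the energy gap: any point of $\partial\theta^i_\varepsilon$ within distance $\delta$ of $u_n$ would have energy at most $\Upsilon_{\varepsilon,T}(u_n)+\delta/n$, which is below the boundary infimum. A secondary point to check is the compatibility $\rho_1\le\rho_0$, which I would arrange by shrinking $\rho_1$ in Lemma \ref{lem4.6}; that lemma remains valid for smaller $\rho_1$.
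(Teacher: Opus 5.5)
Your proposal is correct and follows essentially the same route as the paper. Both use Ekeland's principle in each $\theta_\varepsilon^i$, the energy gap $\beta_\varepsilon^i<\tilde d_{0,T,c}+\rho_1/2<\tilde\beta_\varepsilon^i$ from Lemma~\ref{lem4.7} to stay off $\partial\theta_\varepsilon^i$, the local $(PS)$ condition below $\tilde d_{0,T,c}+\rho_0$ to pass to the limit, Lemma~\ref{lem4.2} to remove the truncation at a negative level, and disjointness of the balls $B_{\tilde\rho}(k_i)$ for distinctness. You correctly invoke Lemma~\ref{lem4.5} where the paper cites Lemma~\ref{lem3.8}, and your choice $\rho_1\le\rho_0$ matches the intended reading of Lemma~\ref{lem4.6}.

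Your closing worry about metric interiority is unnecessary. Once $u_n\in\theta_\varepsilon^i$, which is relatively open in $S_c$, the derivative estimate at fixed $n$ only uses curves with $t\to0$. The bound you sketch there also runs the wrong way: Ekeland's inequality bounds $\Upsilon_{\varepsilon,T}(w)$ from below, not from above.
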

\begin{proof}
Let $\varepsilon\in(0,\tilde{\varepsilon}),$ where $\tilde{\varepsilon}:=\varepsilon_{2}$
as determined in Lemma \ref{lem4.7}, and choose $i\in\{1,2,\cdots,l\}.$  Then, the
Ekeland variational principle implies that there exists  a sequence $\{u_{n}^{i}\}\subset\theta_{\varepsilon}^{i}$
satisfying
$$\Upsilon_{\varepsilon,T}(u_{n}^{i})\rightarrow\beta_{\varepsilon}^{i}~\mbox{and}\ ~
\|\Upsilon_{\varepsilon,T}|_{S'_{c}}(u_{n}^{i})\|_{(H^{2}(\mathbb{R}^{N}))^{*}}\rightarrow0, \quad
\mbox{as}\ ~n\rightarrow\infty.$$
In other words, $\{u_{n}^{i}\}$ is a $(PS)_{_{\varepsilon}^{i}}$ sequence for
$\Upsilon_{\varepsilon,T}$ when restricted on $S_{c}$. Because $\beta_{\varepsilon}^{i}
<\tilde{d}_{0,T,c}+\rho_{0}$, Lemma \ref{lem3.8} guarantees the existence of
$u^{i}$ with $u_{n}^{i}\rightarrow u^{i}$ in $H^{2}(\mathbb{R}^{N})$. Therefore,
$$u^{i}\in\theta_{\varepsilon}^{i},\quad\Upsilon_{\varepsilon,T}(u^{i})=\beta_{\varepsilon}^{i},
\quad \mbox{and}\quad \Upsilon_{\varepsilon,T}|_{S'_{c}}(u^{i})=0.$$
For
$$\mathcal{Q}_{\varepsilon}(u^{i})\in\overline{B_{\tilde{\rho}}(k_{i})},\quad
\mathcal{Q}_{\varepsilon}(u^{j})\in\overline{B_{\tilde{\rho}}(k_{j})}\quad\mbox{and}\quad
\overline{B_{\tilde{\rho}}(k_{j})}\cap\overline{B_{\tilde{\rho}}(k_{i})}
=\emptyset,
 \quad
\mbox{for}\quad   i\neq j,$$
we get $u^{i}\not\equiv u^{j}$ for $i\neq j$, where $i\leq i,j\leq l$. This argument shows that
$\Upsilon_{\varepsilon,T}$ possesses at least $l$ nontrivial critical points for
every $\varepsilon\in(0,\tilde{\varepsilon})$. Using Lemma \ref{lem4.2} and the fact
that $\Upsilon_{\varepsilon,T}(u^{i})<0$ for every  $i=1,2,\cdots,l$, it is obvious
that $u^{i}$ are in fact the critical points of $\Upsilon_{\varepsilon}$ restricted on
$S_{c}$ with $\Upsilon_{\varepsilon}(u^{i})=\beta_{\varepsilon}^{i}<0$ and $\Upsilon'_{\varepsilon}
u^{i}=\lambda_{i}c$. Therefore, we can deduce that
$$\frac{1}{2}\lambda_{i}c=\Upsilon_{\varepsilon}(u^{i})+\left(\frac{1}{q}-\frac{1}{2}\right)\mu\|u^{i}\|_{q}^{q}+
\left(\frac{1}{2^{**}}-\frac{1}{2}\right)\|u^{i}\|_{2^{**}}^{2^{**}};$$
thus, $\lambda_{i}<0$,  for every $i=1,2,\cdots,l$.
This completes the proof of Proposition \ref{pro4.1}.
\end{proof}
In order to prove the concentrating behavior of positive normalized solutions
of system \eqref{e1.1}, by the scaling $v(x)=u(\varepsilon x)$, we have to
consider the following problem which is equivalent to system \eqref{e1.1}
\begin{equation}\label{e4.12}
\Delta^{2}v+V(\varepsilon x)v=\lambda v+\mu |v|^{q-2}v+|v|^{2^{**}-2}v \quad\mbox{in}\ \mathbb{R}^{N}.
\end{equation}
In order words, if the couple $(v,\lambda)$ is a weak solution of problem \eqref{e4.12},
then $(v,\lambda)$ is a solution of problem \eqref{e1.1}. By Proposition \ref{pro4.1},
there are $k$ couples of $(u_{\varepsilon}^{i},\lambda_{\varepsilon}^{i})\in H^{2}(\mathbb{R}^{N})
\times\mathbb{R}$ such that
$$v_{\varepsilon}^{i}\in\theta_{\varepsilon}^{i},\Upsilon_{\varepsilon}(v_{\varepsilon}^{i})=
\beta_{\varepsilon}^{i} ~\mbox{and}\ ~\Upsilon'_{\varepsilon}(v_{\varepsilon}^{i})-
\theta_{\varepsilon}^{i}\Psi'(v_{\varepsilon}^{i})=0\quad \text{in}\ H_{\varepsilon}(\mathbb{R}^{N}),$$
where $i\in\{1,2,\cdots,k\}$, $v_{\varepsilon}^{i}(x)>0$ for every $x\in\mathbb{R}^{N}$ and $\lambda^{i}<0$.
\begin{lemma}\label{lem4.8}
Let $\varepsilon\in(0,\tilde{\varepsilon})$ be fixed. Decreasing $\tilde{\varepsilon}>0,$ there exist $y_{\varepsilon}^{i}\in\mathbb{R}^{N}$, $R_{0}^{i}>0$ and
$\beta_{0}^{i}$ such that
$$\int_{B_{R_{0}}(y_{\varepsilon}^{i})}|v_{\varepsilon}^{i}|^{2}dx\geq\beta_{0}^{i}$$
for every $i \in \{1,2,\cdots,k\}$. Furthermore, the sequence $\{\varepsilon y_{\varepsilon}^{i}\}_{i}$
is bounded and, passing to a subsequence if necessary, $\varepsilon y_{\varepsilon}^{i}\rightarrow x^{i},$
as $\varepsilon\rightarrow0^{+}$.
\end{lemma}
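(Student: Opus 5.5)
We argue by contradiction in two stages, mirroring the proof of Lemma \ref{lem4.6}. Throughout, fix $i$ and write $v_\varepsilon=v_\varepsilon^i$. By Proposition \ref{pro4.1} we have $v_\varepsilon\in\theta_\varepsilon^i\subset S_c$ and $\Upsilon_{\varepsilon}(v_\varepsilon)=\Upsilon_{\varepsilon,T}(v_\varepsilon)=\beta_\varepsilon^i<\tilde d_{0,T,c}+\rho_1/2<0$ by Lemma \ref{lem4.7}. Hence, by Lemma \ref{lem4.2}, $\|\Delta v_\varepsilon\|_2<R_0$ uniformly in $\varepsilon$, so the family is bounded in $H^2(\mathbb{R}^N)$.

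\emph{Non-vanishing.} Suppose no such $y_\varepsilon^i$, $R_0^i$, $\beta_0^i$ exist. Then along a sequence $\varepsilon_n\to0$ we have $\sup_{y}\int_{B_R(y)}|v_{\varepsilon_n}|^2\to0$ for every $R$. Lions' vanishing lemma then gives $|v_{\varepsilon_n}|_q\to0$. Since $\|\Delta v_{\varepsilon_n}\|_2<R_0$, inequality \eqref{e2.3} makes the combination $\frac12\|\Delta v\|_2^2-\frac1{2^{**}}|v|_{2^{**}}^{2^{**}}$ nonnegative, and $V\ge0$. Therefore $\Upsilon_{\varepsilon_n,T}(v_{\varepsilon_n})\ge-\frac{\mu}{q}|v_{\varepsilon_n}|_q^q\to0$, contradicting $\beta_{\varepsilon_n}^i\le\tilde d_{0,T,c}+\rho_1/2<0$. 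So there exist $R_0^i,\beta_0^i>0$ and points $y_\varepsilon^i$ satisfying the stated lower bound for all small $\varepsilon$; this may require decreasing $\tilde\varepsilon$.

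\emph{Boundedness of $\varepsilon y_\varepsilon^i$.} Set $w_\varepsilon(x)=v_\varepsilon(x+y_\varepsilon^i)$. Then $w_\varepsilon\rightharpoonup w\neq0$ in $H^2$ along a subsequence, since the mass on $B_{R_0^i}(0)$ is preserved under the compact embedding. Suppose $|\varepsilon_n y_{\varepsilon_n}^i|\to\infty$. Because $\Upsilon_{0,T}\le\Upsilon_{\varepsilon,T}$, the sequence $v_{\varepsilon_n}$ is a minimizing sequence for $\tilde d_{0,T,c}$, with $\Upsilon_{\varepsilon_n,T}(v_{\varepsilon_n})\le \tilde d_{0,T,c}+\rho_n$, $\rho_n\to0$ (using Lemma \ref{lem4.1} and $\beta^i_\varepsilon\le\Upsilon_{\varepsilon,T}(\hat u^i_\varepsilon)\to\tilde d_{0,T,c}$ from \eqref{e4.11}). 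Lemma \ref{lem3.9}, applied to the translated sequence, yields strong convergence $w_{\varepsilon_n}\to w$ in $H^2$ with $w\in S_c$. Exactly as in \eqref{e4.9}, Fatou's lemma and $(V_2)$ give $\liminf\Upsilon_{\varepsilon_n,T}(v_{\varepsilon_n})\ge\Upsilon_{\infty,T}(w)\ge\tilde d_{\infty,T,c}$. This contradicts $\tilde d_{0,T,c}<\tilde d_{\infty,T,c}$. Hence $\{\varepsilon y^i_\varepsilon\}$ is bounded, and a subsequence converges to some $x^i$. Repeating case (II) of Lemma \ref{lem4.6} with Corollary \ref{cor3.1} then gives $V(x^i)=V_0$. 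Since $\mathcal{Q}_\varepsilon(v_\varepsilon)\in \overline{B_{\tilde\rho}(k_i)}$, we also get $x^i=k_i$; this is a useful addition for the concentration statement.

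\emph{Main obstacle.} The delicate step is the strong convergence of $w_{\varepsilon_n}$. Lemma \ref{lem3.9} is stated for a fixed functional $\Upsilon_{V_1,T}$, whereas here $\varepsilon_n$ varies. I would handle this by observing that $v_{\varepsilon_n}$ is genuinely minimizing for the autonomous $\Upsilon_{0,T}$ and applying Lemma \ref{lem3.9} to it directly. The translation produced by that lemma has to be compared with $y_\varepsilon^i$; the two differ by a bounded vector because of the non-vanishing mass on $B_{R_0^i}(y_\varepsilon^i)$. Strong convergence in $L^2$ (no mass splitting, as in Case 1 of Lemma \ref{lem3.9} via Lemma \ref{lem3.7}) is what allows passing the potential term to the limit.
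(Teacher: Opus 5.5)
Your proposal is correct and follows the paper's own argument. Non-vanishing comes from Lions' lemma against the negative level $\beta^i_\varepsilon$. The translated family is then minimizing for $\tilde d_{0,T,c}$, so Lemma \ref{lem3.9} gives strong convergence, and an unbounded $\varepsilon y^i_\varepsilon$ would force $\tilde d_{\infty,T,c}\le\tilde d_{0,T,c}$. Finally, the limit point is identified through Corollary \ref{cor3.1} and the barycenter map $\mathcal{Q}_\varepsilon$. You also supply details the paper skips: you handle the critical term via \eqref{e2.3} in the vanishing step, and you reconcile the translation produced by Lemma \ref{lem3.9} with $y^i_\varepsilon$.
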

\begin{proof}
Arguing by contradiction, suppose that there exists a sequence $\{\varepsilon_{n}\}_{n}$ with $\varepsilon_{n}\rightarrow0^{+}$
such that
$$\lim\limits_{n\rightarrow\infty}\sup_{y\in\mathbb{R}^{N}}\int_{B_{r}(y)}|v_{\varepsilon_{n}^{i}}|^{2}dx=0.$$
By the Lions' vanishing lemma in Molica Bisci et al. \cite{bv}, we can deduce that
\begin{equation*}
v_{\varepsilon_{n}}^{i}\rightarrow0 ~\mbox{in}\ L^{\nu}(\mathbb{R}^{N}),
\quad\mbox{for\ all}\ \nu\in(2,2^{**}).
\end{equation*}
Therefore, $\lim\limits_{n\rightarrow\infty}\Upsilon_{\varepsilon}(v_{\varepsilon_{n}}^{i})\geq0$
which contradicts the fact that
\begin{equation}\label{e4.14}
\lim_{n\rightarrow\infty}\Upsilon_{\varepsilon}(v_{\varepsilon_{n}}^{i})=
\lim\limits_{n\rightarrow\infty}\beta_{\varepsilon_{n}}^{i}\leq\tilde{d}_{0,T,c}+\rho<0.
\end{equation}
Therefore, we define $\tilde{v}_{\varepsilon}^{i}(\cdot)=v_{\varepsilon}^{i}
(\cdot+y_{\varepsilon}^{i})$ and $\{\tilde{v}_{\varepsilon}^{i}\}_{i}$ is bounded for every  $\varepsilon\in(0,\tilde{\varepsilon})$. Therefore, there exists $\tilde{v}\in H_{\varepsilon}
(\mathbb{R}^{N})\setminus\{0\}$ such that $\tilde{v}_{\varepsilon}^{i}\rightharpoonup \tilde{v}^{i}$
in $H_{\varepsilon}(\mathbb{R}^{N}),$ as $\varepsilon\rightarrow0^{+}$ along a subsequence.
Since $\{\tilde{v}_{\varepsilon}^{i}\}_{i}\subset S_{c}$ and
$$\Upsilon_{\varepsilon}(v_{\varepsilon}^{i})\geq\Upsilon_{0}(v_{\varepsilon}^{i})
=\Upsilon_{0}(\tilde{v}_{\varepsilon}^{i})\geq\tilde{d}_{0,T,c},$$
this jointly with \eqref{e4.11} yields that $\lim\limits_{\varepsilon\rightarrow0^{+}}\Upsilon_{0}(\tilde{v}_{\varepsilon}^{i})
=\Upsilon_{0,T,c}$. By Lemma \ref{lem3.9}, we see that $\tilde{v}_{\varepsilon}^{i}\rightarrow\tilde{v}$ in
$H_{\varepsilon}(\mathbb{R}^{N})$,  as $\varepsilon\rightarrow0^{+}$.
Suppose that $\{\varepsilon y_{\varepsilon}^{i}\}_{i}$ is unbounded with respect to
$\varepsilon\in(0,\tilde{\varepsilon})$, then there exists a subsequence
$\{\varepsilon_{n} y_{\varepsilon_{n}}^{i}\}_{i}$ such that $\varepsilon_{n} y_{\varepsilon_{n}}^{i}\rightarrow+
\infty$ as $n\rightarrow\infty$. Exploiting $\tilde{\varepsilon}_{\varepsilon_{n}}^{i}\rightarrow\tilde{\varepsilon}$
in $H_{\varepsilon}(\mathbb{R}^{N})$, one has
\begin{eqnarray*}
\Upsilon_{\varepsilon_{n}}(v_{\varepsilon_{n}}^{i}) &=&\nonumber
\frac{1}{2}\|\Delta v_{\varepsilon_{n}}^{i}\|_{2}^{2}+\frac{1}{2}\int_{\mathbb{R}^{N}}V(\varepsilon_{n}x)
|v_{\varepsilon_{n}}^{i}|^{2}dx-\int_{\mathbb{R}^{N}}G(v_{\varepsilon_{n}}^{i})dx\\
&=&\frac{1}{2}\|\Delta \tilde{v}_{\varepsilon_{n}}^{i}\|_{2}^{2}+\frac{1}{2}\int_{\mathbb{R}^{N}}V(\varepsilon_{n}x+\varepsilon_{n}y_{n})
|\tilde{v}_{\varepsilon_{n}}^{i}|^{2}dx-\int_{\mathbb{R}^{N}}G(\tilde{v}_{\varepsilon_{n}}^{i})dx
\rightarrow\Upsilon_{\infty}(\tilde{v}),
\end{eqnarray*}
so invoking \eqref{e4.14}, we obtain the following inequality:
$$\tilde{d}_{0,T,c}+\frac{\rho_{1}}{2}\geq\Upsilon_{\infty}(\tilde{v})\geq\tilde{d}_{\infty,T,c}.$$
Due to Lemma \ref{lem3.7}, this is in contradiction with the definition of $\rho$
given in Lemma \ref{lem4.6}. Therefore, up to a subsequence, $\varepsilon y_{\varepsilon}^{i}
\rightarrow x_{0}^{i}$ in $\mathbb{R}^{N}$ as $\varepsilon\rightarrow0^{+}$.

In the sequel, we will verify that $x_{0}^{i}=x^{i}$. Using a
similar argument as in the case $(ii)$ in the proof of Lemma \ref{lem4.6}, we can infer that
$V(x_{0}^{i})=V_{0}$. Recalling $\{v_{\varepsilon}^{i}\}\in\theta_{\varepsilon}^{i}$,
we know that 
$$\lim\limits_{n\rightarrow\infty}\mathcal{Q}_{\varepsilon_{n}}(v_{\varepsilon}^{i})=x_{0}^{i}.$$
Moreover, we have that $|x^{i}-x_{0}^{i}|\leq\rho_{0}$. Hence, we obtain that
$x_{0}^{i}=x^{i}$.
This completes the proof of  Lemma \ref{lem4.8}.
\end{proof}
\begin{lemma}\label{lem4.9}
Let $\varepsilon\in(0,\tilde{\varepsilon})$ be fixed. Decreasing $\varepsilon^{*}>0$ if necessary, there exists $v_{\varepsilon}^{i}$ possessing a maximum $\mu_{\varepsilon}^{i},$ satisfying
$V(\varepsilon \mu_{\varepsilon}^{i})\rightarrow V(x^{i}),$ as $\varepsilon\rightarrow0^{+},$
for every $i\in\{1,2,\cdots,k\}$.
\end{lemma}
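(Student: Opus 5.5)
The plan is to combine the concentration information from Lemma \ref{lem4.8} with a uniform decay estimate for the translated solutions $\tilde{v}_{\varepsilon}^{i}=v_{\varepsilon}^{i}(\cdot+y_{\varepsilon}^{i})$. In the proof of Lemma \ref{lem4.8} we obtained, along any sequence $\varepsilon_n\to0^+$ and up to a subsequence, $\tilde{v}_{\varepsilon_n}^{i}\to\tilde{v}^{i}$ strongly in $H^{2}(\mathbb{R}^{N})$ and $\varepsilon_n y_{\varepsilon_n}^{i}\to x^{i}$ with $V(x^{i})=V_{0}$. Each $\tilde{v}_{\varepsilon}^{i}$ solves
\begin{equation*}
\Delta^{2}\tilde{v}+V(\varepsilon x+\varepsilon y_{\varepsilon}^{i})\tilde{v}=\lambda_{\varepsilon}^{i}\tilde{v}+\mu|\tilde{v}|^{q-2}\tilde{v}+|\tilde{v}|^{2^{**}-2}\tilde{v},
\end{equation*}
with $\lambda_{\varepsilon}^{i}<0$ bounded by \eqref{e4.4}. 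The aim is to show that $\tilde{v}_{\varepsilon}^{i}(x)\to0$ as $|x|\to\infty$ uniformly in $\varepsilon$, and that $\|\tilde{v}_{\varepsilon}^{i}\|_{\infty}\ge\delta>0$ uniformly in $\varepsilon$.

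\textbf{Step 1 ($L^\infty$ bound and uniform decay).} We write the equation as $\Delta^{2}\tilde{v}+(V-\lambda_\varepsilon^i)\tilde v=a_\varepsilon(x)\tilde v$, where $a_\varepsilon=\mu|\tilde v|^{q-2}+|\tilde v|^{2^{**}-2}$. Since $\tilde v_{\varepsilon_n}\to\tilde v$ in $H^2$, the family $\{|\tilde v_{\varepsilon_n}|^{2^{**}}\}$ is uniformly integrable. This allows a Brezis--Kato/Moser type iteration, adapted to the biharmonic operator (as announced in Section \ref{S4}), to give $\sup_n\|\tilde v_{\varepsilon_n}\|_{L^\infty}\le C$. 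Moreover, local elliptic estimates for $\Delta^2$ give
\begin{equation*}
\|\tilde v_{\varepsilon_n}\|_{L^\infty(B_1(z))}\le C\|\tilde v_{\varepsilon_n}\|_{L^{2}(B_2(z))}.
\end{equation*}
Combining this with the strong $L^2$ convergence, which gives $\sup_n\int_{|x|\ge R}|\tilde v_{\varepsilon_n}|^2\to0$ as $R\to\infty$, we get $\tilde v_{\varepsilon_n}(x)\to0$ as $|x|\to\infty$ uniformly in $n$.

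\textbf{Step 2 (nondegeneracy of the maximum).} Since $\lambda_\varepsilon^i\le 2\tilde d_{\infty,T,c}/c<0$, testing the equation with $\tilde v$ gives
\begin{equation*}
|\lambda_\varepsilon^i|c\le\mu\|\tilde v\|_q^q+\|\tilde v\|_{2^{**}}^{2^{**}}\le\bigl(\mu\|\tilde v\|_\infty^{q-2}+\|\tilde v\|_\infty^{2^{**}-2}\bigr)c.
\end{equation*}
Hence $\|\tilde v_{\varepsilon}^i\|_\infty\ge\delta>0$ independently of $\varepsilon$. By Step 1 there is $R_1$ with $|\tilde v_{\varepsilon_n}^i|<\delta/2$ on $|x|\ge R_1$. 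Because $\tilde v\in H^2$ is continuous and tends to zero at infinity, $|\tilde v|$ attains its maximum at some $z_{\varepsilon}^{i}$, and necessarily $z_{\varepsilon}^{i}\in B_{R_1}(0)$.

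\textbf{Step 3 (location of the maximum).} The point $\mu_\varepsilon^i:=z_\varepsilon^i+y_\varepsilon^i$ is a maximum point of $|v_\varepsilon^i|$, and
\begin{equation*}
\varepsilon\mu_\varepsilon^i=\varepsilon z_\varepsilon^i+\varepsilon y_\varepsilon^i\to x^i,
\end{equation*}
since $z^i_\varepsilon$ is bounded. Continuity of $V$ then gives $V(\varepsilon\mu_\varepsilon^i)\to V(x^i)=V_0$. Because every sequence $\varepsilon_n\to0$ has a subsequence along which this holds, it holds for the full limit $\varepsilon\to0^+$.

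\textbf{Main obstacle.} The hardest part is the uniform $L^\infty$ bound in Step 1. The nonlinearity has critical growth, and the biharmonic operator lacks a maximum principle, so truncation test functions $\tilde v\,|\tilde v|^{2(\beta-1)}$ do not behave as cleanly as they do for $-\Delta$. I would first try the Brezis--Kato approach, writing $a_\varepsilon=a^{(1)}_\varepsilon+a^{(2)}_\varepsilon$ with $\|a^{(1)}_\varepsilon\|_{L^{N/4}}$ small uniformly in $\varepsilon$ (possible by the uniform integrability) and $a^{(2)}_\varepsilon$ bounded. I would then bootstrap $\tilde v\in L^p$ for all $p$ using $L^p$ theory for $(\Delta^2+1)^{-1}$, and conclude via the embedding $W^{4,p}\hookrightarrow L^\infty$ for $p>N/4$.
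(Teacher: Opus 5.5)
Your proof is correct and has the same skeleton as the paper's proof. In both, one first gets a uniform $L^\infty$ bound and uniform decay for $\tilde v^i_\varepsilon$, then an $\varepsilon$-independent lower bound on $\|\tilde v^i_\varepsilon\|_\infty$. This puts the maximum point in a fixed ball, and Lemma \ref{lem4.8} gives $\varepsilon(y^i_\varepsilon+\varrho^i_\varepsilon)\to x^i$. The analytic core, however, is done differently.

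The paper gets the $L^\infty$ bound by Moser iteration with the truncated test functions $\tilde v_\varepsilon\tilde v_{\varepsilon,L}^{2(\beta-1)}$, resting on the inequality \eqref{e4.17} taken from Ambrosio and Isernia. That inequality is a device for the Gagliardo double integral of the fractional Laplacian. For $\Delta^2$ nothing bounds $\int\Delta v\,\Delta(v\,v_L^{2(\beta-1)})$ from below by $\|\Delta\mathcal{B}(v)\|_2^2$, because expanding $\Delta(v\,\phi(v))$ produces $|\nabla v|^2$ terms of no definite sign. The paper then reads off the uniform decay \eqref{e4.16} directly ``from $\tilde v_\varepsilon\to\tilde v$'', and gets $\|\tilde v_\varepsilon\|_\infty\ge\rho^i$ by a brief contradiction argument.

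Your three replacements are each sound:
\begin{itemize}
\item The Brezis--Kato splitting with $\|a^{(1)}_\varepsilon\|_{L^{N/4}}$ uniformly small is legitimate, since $(2^{**}-2)\frac N4=2^{**}$ and $|\tilde v_{\varepsilon_n}|^{2^{**}}$ converges in $L^1$. Followed by $L^p$ theory for $(\Delta^2+1)^{-1}$, this is the standard tool for higher-order operators and makes uniformity in $\varepsilon$ transparent.
\item The interior estimate combined with uniformly small $L^2$-tails proves \eqref{e4.16} honestly.
\item The quantitative bound $|\lambda^i_\varepsilon|\le\mu\|\tilde v\|_\infty^{q-2}+\|\tilde v\|_\infty^{2^{**}-2}$, together with $\lambda^i_\varepsilon\le 2\tilde d_{\infty,T,c}/c$, makes the nondegeneracy explicit. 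The latter bound is valid for these critical points because $\beta^i_\varepsilon<\tilde d_{0,T,c}+\rho_1<\tilde d_{\infty,T,c}$.
\end{itemize}
What your route costs is reliance on elliptic $L^p$ theory for the biharmonic operator instead of elementary test-function computations. What it buys is an argument that actually works without a chain rule or maximum principle for $\Delta^2$.

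Two small repairs are needed.
\begin{itemize}
\item For $N\ge5$, $H^2(\mathbb{R}^N)$ does not embed into continuous functions, so ``$\tilde v\in H^2$ is continuous'' is not a valid reason the maximum is attained. Continuity must come from your Step 1: a bounded right-hand side gives $W^{4,p}_{loc}$ for all $p<\infty$, hence $C^{3,\alpha}$.
\item You take a maximum point of $|v^i_\varepsilon|$, which is the right move since positivity of $v^i_\varepsilon$ is not available for $\Delta^2$. If a maximum of $v^i_\varepsilon$ itself is required, replace $v^i_\varepsilon$ by $-v^i_\varepsilon$ where needed. This is again a critical point with the same $\lambda^i_\varepsilon$, the same energy and the same $\mathcal{Q}_\varepsilon$, so nothing else changes.
\end{itemize}
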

\begin{proof}
Since $\tilde{v}_{\varepsilon}^{i}(\cdot)=v_{\varepsilon}^{i}(\cdot+y_{\varepsilon}^{i})$,
the definition of $v_{\varepsilon}^{i}$ yields that a couple of weak solutions $(\tilde{v}_{\varepsilon}^{i},
\lambda_{\varepsilon}^{i})$ solving the following problem:
\begin{equation}\label{e4.15}
\left\{
\begin{array}{lll}
\Delta^{2}\tilde{v}_{\varepsilon}^{i}+V(\varepsilon x+\varepsilon x_{\varepsilon}^{i})\tilde{v}_{\varepsilon}^{i}=\lambda_{\varepsilon}^{i} \tilde{v}_{\varepsilon}^{i}+\mu |\tilde{v}_{\varepsilon}^{i}|^{q-2}\tilde{v}_{\varepsilon}^{i}+|\tilde{v}_{\varepsilon}^{i}|^{2^{**}-2}\tilde{v}_{\varepsilon}^{i} &\mbox{in}\ \mathbb{R}^{N},\\
\int_{\mathbb{R}^{N}}|\tilde{v}_{\varepsilon}^{i}|^{2}dx=c^{2} &\mbox{in}\ \mathbb{R}^{N}.
\end{array}
\right.
\end{equation}
Arguing as in the proof of Proposition \ref{pro4.1} and Lemma \ref{lem4.8}, we know that
$\tilde{v}_{\varepsilon}^{i}\rightarrow\tilde{v}_{\varepsilon}^{i}$ in $H_{\varepsilon}(\mathbb{R}^{N})$,
$\lambda_{\varepsilon}^{i}\rightarrow\lambda^{i}$ in $\mathbb{R}^{N}$ and $\varepsilon x_{\varepsilon}^{i}
\rightarrow x^{i}$ in $\mathbb{R}^{N},$ as $\varepsilon\rightarrow0^{+}$. So, using \eqref{e4.15},
$(\tilde{v}^{i},\lambda^{i})$ is a nontrivial solution to
$$\Delta^{2}\tilde{v}^{i}+V_{0}\tilde{v}^{i}=\lambda^{i} \tilde{v}^{i}+\mu |\tilde{v}^{i}|^{q-2}\tilde{v}^{i}+|\tilde{v}^{i}|^{2^{**}-2}\tilde{v}^{i} \quad\mbox{in}\ \mathbb{R}^{N}.$$
We will divide the proof into two steps.

\textbf{Step 1.}
We verify that $\|\tilde{v}^{i}_{\varepsilon}\|_{\infty}\geq\rho^{i}$ and
\begin{equation}\label{e4.16}
\lim_{|x|\rightarrow\infty}\tilde{v}_{\varepsilon}^{i}(x)=0,
\end{equation}
where $\rho^{i}>0$ is independent of $\varepsilon\in(0,\tilde{\varepsilon})$

We prove the first assertion. Assuming to the contrary, we get that $|\tilde{v}_{\varepsilon}^{i}|_{\infty}
\rightarrow0,$ as $\varepsilon\rightarrow0^{+}$ in the sense of a subsequence. Then,
it is easy to obtain that $\tilde{v}_{\varepsilon}^{i}\rightarrow0$ in $H_{\varepsilon}
(\mathbb{R}^{N}),$ which on the other hand  is known to be impossible. Therefore,
$$\|\tilde{v}_{\varepsilon}^{i}\|_{\infty}\geq\rho^{i},$$
where $\rho^{i}>0$ is independent of $\varepsilon\in(0,\tilde{\varepsilon})$.

In the sequel, we will verify that \eqref{e4.16} holds. For any $L>0$ and $\beta>1$,
we consider the function $\varpi(m)=m(\min\{m,L\})^{2\beta-1}$ and
$$\varpi(\tilde{v}_{\varepsilon}^{i})=\varpi_{\varepsilon,L}(\tilde{v}_{\varepsilon}^{i})
=\tilde{v}^{i}_{\varepsilon}\tilde{v}^{2(\beta-1)}_{\varepsilon,L}\in H_{\varepsilon}
(\mathbb{R}^{N}),\quad\tilde{v}_{\varepsilon,L}=\{\tilde{v}_{\varepsilon},L\}.$$
Set
$$\mathcal{A}(m)=\frac{|m|^{2}}{2}\quad\mbox{and}\ ~ \mathcal{B}=
\int_{0}^{m}(\varpi'(\eta))^{\frac{1}{2}}d\eta.$$
By the similar argument as in Ambrosio and Isernia \cite{am}, we have
\begin{equation}\label{e4.17}
\mathcal{A}'(x-y)(\varpi(x)-\varpi(y))\geq|\mathcal{B}(x)-
\mathcal{B}(y)|^{2},\quad\mbox{for\ every}\ ~x,y\in\mathbb{R}.
\end{equation}
From \eqref{e4.17}, we get
\begin{equation*}
|\mathcal{B}(\tilde{v}_{\varepsilon}^{i}(x))-\mathcal{B}(\tilde{v}_{\varepsilon}^{i}(x))|^{2}
\leq(\tilde{v}^{i}_{\varepsilon}(x)-\tilde{v}^{i}_{\varepsilon}(y))
((\tilde{v}^{i}_{\varepsilon}\tilde{v}^{2(\beta-1)}_{\varepsilon,L})(x)-
(\tilde{v}^{i}_{\varepsilon}\tilde{v}^{2(\beta-1)}_{\varepsilon,L})(y)).
\end{equation*}
Therefore, taking $\varpi(\tilde{v}_{\varepsilon}^{i})=
\tilde{v}^{i}_{\varepsilon}\tilde{v}^{2\beta}_{\varepsilon,L}$ as a test function
in \eqref{e4.15}, we have
\begin{align}\label{e4.19}
&\|\Delta(\mathcal{B}(\tilde{v}^{i}_{\varepsilon}))\|_{2}^{2}+\int_{\mathbb{R}^{N}}V_{\varepsilon}(x)
|\tilde{v}^{i}_{\varepsilon}|^{2}\tilde{v}^{2(\beta-1)}_{\varepsilon,L}dx \notag\\ &\leq
\int_{\mathbb{R}^{N}}\Delta(\tilde{v}^{i}_{\varepsilon}(x)-\tilde{v}^{i}_{\varepsilon}(y))
\Delta((\tilde{v}^{i}_{\varepsilon}\tilde{v}^{2\beta}_{\varepsilon,L})(x)-
(\tilde{v}^{i}_{\varepsilon}\tilde{v}^{2(\beta-1)}_{\varepsilon,L})(y))dx
\notag\\
&\quad+\int_{\mathbb{R}^{N}}V_{\varepsilon}(x)|\tilde{v}^{i}_{\varepsilon}|^{2}\tilde{v}^{2(\beta-1)}_{\varepsilon,L}dx \notag\\
&=\lambda_{\varepsilon}^{i}\int_{\mathbb{R}^{N}}|\tilde{v}^{i}_{\varepsilon}|^{2}\tilde{v}^{2(\beta-1)}_{\varepsilon,L}dx
+\mu\int_{\mathbb{R}^{N}}|\tilde{v}^{i}_{\varepsilon}|^{q}\tilde{v}^{2(\beta-1)}_{\varepsilon,L}dx+
\int_{\mathbb{R}^{N}}|\tilde{v}^{i}_{\varepsilon}|^{2^{**}}\tilde{v}^{2(\beta-1)}_{\varepsilon,L}dx,
\end{align}
where $V_{\varepsilon}(x)=V(\varepsilon x+\varepsilon x_{\varepsilon}^{i})$. Using \eqref{e4.17},
we have $\tilde{v}^{i}_{\varepsilon}v^{\beta-1}_{\varepsilon,L}\geq|\mathcal{B}(\tilde{v}_{\varepsilon}^{i})|$.
Since $\mathcal{B}(\tilde{v}_{\varepsilon}^{i})|\geq\frac{1}{\beta}
\tilde{v}^{i}_{\varepsilon}v^{\beta-1}_{\varepsilon,L}$ and the embedding is continuous
$H_{\varepsilon}(\mathbb{R}^{N})\hookrightarrow L^{t}(\mathbb{R}^{N})$,
for every $t\in[2,2^{**}]$, there exists a suitable constant $\tilde{S}_{1}>0$
such that
\begin{equation}\label{e4.20}
\|\Delta(\mathcal{B}(\tilde{v}^{i}_{\varepsilon}))\|_{2}^{2}\geq \tilde{S}_{1}
\|\Delta(\mathcal{B}(\tilde{v}^{i}_{\varepsilon}))\|_{2^{**}}^{2}\geq\frac{1}{\beta^{2}}
\tilde{S}_{1}\|\tilde{v}_{\varepsilon}^{i}\tilde{v}_{\varepsilon,L}^{\beta-1}\|^{2}_{2^{**}}.
\end{equation}
By \eqref{e4.19}, \eqref{e4.20}, we obtain that
\begin{align}\label{e4.21}
& \frac{1}{\beta^{2}}\tilde{S}_{1}\|\tilde{v}_{\varepsilon}^{i}\tilde{v}_{\varepsilon,L}^{\beta-1}\|^{2}_{2^{**}}
  +\int_{\mathbb{R}^{N}}V_{\varepsilon}(x)|\tilde{v}_{\varepsilon}^{i}\tilde{v}_{\varepsilon,L}^{\beta-1}|^{2}dx\notag \\
&\leq \lambda_{\varepsilon}^{i}\int_{\mathbb{R}^{N}}|\tilde{v}_{\varepsilon}^{i}\tilde{v}_{\varepsilon,L}^{\beta-1}|
^{2}dx+\mu\int_{\mathbb{R}^{N}}|\tilde{v}_{\varepsilon}^{i}\tilde{v}_{\varepsilon,L}^{\beta-1}|^{q}dx
+\int_{\mathbb{R}^{N}}|\tilde{v}_{\varepsilon}^{i}\tilde{v}_{\varepsilon,L}^{\beta-1}|^{2^{**}}dx.
\end{align}
By the proof of Lemma \ref{lem4.4}, we obtain $\lambda_{\varepsilon}^{i}\leq(\lambda^{*})^{i}
<0,$ for every $\varepsilon\in(0,\tilde{\varepsilon})$. Choosing $0<\mu<\frac{V_{0}}{2}$ and using
\eqref{e4.19} and \eqref{e4.21}, we deduce that
\begin{equation}\label{e4.22}
|\tilde{v}_{\varepsilon}^{i}\tilde{v}^{\beta-1}_{\varepsilon,L}|_{2^{**}}^{2}
\leq C\beta^{2}\int_{\mathbb{R}^{N}}|\tilde{v}_{\varepsilon}|^{2^{**}}\tilde{v}_{\varepsilon,L}^{2(\beta-1)}dx.
\end{equation}
Now, we take $\beta=\frac{2^{**}}{2}$ and fix $R>0$. Noting that
$0\leq\tilde{v}_{\varepsilon,L}\leq\tilde{v}_{\varepsilon}$, we can infer that
\begin{align}\label{e4.23}
&\int_{\mathbb{R}^{N}}\tilde{v}^{2^{**}}_{\varepsilon}\tilde{v}^{R(\beta-1)}_{\varepsilon,L}dx \notag\\
&=\int_{\mathbb{R}^{N}}\tilde{v}^{2^{**}-2}_{\varepsilon}\tilde{v}^{2}_{\varepsilon}\tilde{v}^{2^{**}-2}_{\varepsilon,L}dx \notag\\
&=\int_{\mathbb{R}^{N}}\tilde{v}^{2^{**}-2}_{\varepsilon}(\tilde{v}_{\varepsilon}\tilde{v}
^{\frac{2^{**}-2}{2}}_{\varepsilon,L})^{2}dx \notag\\
&\leq\int_{\{\tilde{v}_{\varepsilon}<R\}}R^{2^{**}-2}\tilde{v}_{\varepsilon}^{2^{**}}dx+\int_{\{\tilde{v}_{\varepsilon}>R\}}
\tilde{v}^{2^{**}-2}_{\varepsilon}(\tilde{v}_{\varepsilon}\tilde{v}^{\frac{2^{**}-2}{2}}_{\varepsilon,L})^{2}dx \notag \\
&\leq\int_{\{\tilde{v}_{\varepsilon}<R\}}R^{2^{**}-2}\tilde{v}_{\varepsilon}^{2^{**}}dx+\left(\int_{\{\tilde{v}_{\varepsilon}>R\}}
\tilde{v}_{\varepsilon}^{2^{**}}dx\right)^{\frac{2^{**}-2}{2}}\left(\int_{\mathbb{R}^{N}}(\tilde{v}_{\varepsilon}\tilde{v}_{\varepsilon,L}
^{\frac{2^{**}-2}{2}})^{2^{**}}dx\right)^{\frac{2}{2^{**}}}.
\end{align}
Since $\{\tilde{v}_{\varepsilon}\}_{n\in\mathbb{N}}$ is bounded in $L^{2^{**}}
(\mathbb{R}^{N})$,  we know  that for every sufficiently large $R$,
\begin{equation*}
\int_{\{\tilde{v}_{\varepsilon}<R\}}R^{2^{**}-2}\tilde{v}_{\varepsilon}^{2^{**}}dx\leq
\frac{1}{2C\beta^{2}}.
\end{equation*}
From \eqref{e4.22} and \eqref{e4.23}, we get
$$\left(\int_{\mathbb{R}^{N}}\left(\tilde{v}_{\varepsilon}\tilde{v}_{\varepsilon,L}
^{\frac{2^{**}-2}{2}}\right)^{2^{**}}dx\right)^{\frac{2}{2^{**}}}\leq C\beta^{2}\int_{\mathbb{R}^{N}}R^{2^{**}-2}
\tilde{v}_{\varepsilon}^{2^{**}}dx<\infty,$$
and taking the limit as $L\rightarrow\infty$, we obtain $\tilde{v}_{\varepsilon}
\in L^{\frac{(2^{**})^{2}}{2}}(\mathbb{R}^{N})$. Now, using $0\leq\tilde{v}_{\varepsilon,L}\leq\tilde{v}_{\varepsilon}$
and passing to the limit as $L\rightarrow\infty$ in \eqref{e4.5}, we have
$$|\tilde{v}_{\varepsilon}|^{2\beta}_{2^{**}\beta}\leq C\beta^{2}\int_{\mathbb{R}^{N}}
\tilde{v}_{\varepsilon}^{2^{**}+2(\beta-1)}dx,$$
from which we deduce that
$$\left(\int_{\mathbb{R}^{N}}\tilde{v}_{\varepsilon}^{2^{**}\beta}dx\right)^{\frac{1}{2^{**}(\beta-1)}}
\leq (C\beta)^{\frac{1}{\beta-1}}\left(\int_{\mathbb{R}^{N}}\tilde{v}_{\varepsilon}^{2^{**}+2(\beta-1)}dx\right)
^{\frac{1}{2(\beta-1)}}.$$
For $t\geq1$, we define $\beta_{t+1}$ inductively so that
$2^{**}+2(\beta_{t+1}-1)=2^{**}\beta_{t}$ and $\beta_{1}=\frac{2^{**}}{2}$.
Then, we have
$$\left(\int_{\mathbb{R}^{N}}\tilde{v}_{\varepsilon}^{2^{**}\beta_{t+1}}dx\right)^{\frac{1}{2^{**}(\beta_{t+1}-1)}}
\leq (C\beta_{t+1})^{\frac{1}{\beta_{t+1}-1}}\left(\int_{\mathbb{R}^{N}}\tilde{v}_{\varepsilon}^{2^{**}\beta_{t}}dx\right)
^{\frac{1}{2(\beta_{t}-1)}}.$$
Let us define
$$D_{t}=\left(\int_{\mathbb{R}^{N}}\tilde{v}_{\varepsilon}^{2^{**}\beta_{t}}dx\right)^{\frac{1}{2^{**}(\beta_{t}-1)}}.$$
Using a standard iteration argument, we can find $C_{0}>0$ independent of $t$ such that
$$D_{t+1}\leq\prod\limits_{k=1}^{t}(C\beta_{k+1})^{\frac{1}{k_{k+1}-1}}D_{1}\leq C_{0}D_{1}.$$
Taking the limit as $t\rightarrow\infty$, we get
$$|v_{n}|_{\infty}\leq C,\quad
\hbox{ for every } n.$$
Since $\tilde{v}_{\varepsilon}^{i}\rightarrow\tilde{v}^{i}$,
we can deduce that \eqref{e4.16} is true.

\textbf{Step 2. }
We verify that $\tilde{v}_{\varepsilon}^{i}$ possesses a maximum $\mu_{\varepsilon}^{i}$
satisfying $V(\varepsilon)\mu_{\varepsilon}^{i}\rightarrow V(x^{i})$. In the
sequel, let $\varrho_{\varepsilon}^{i}$ be a maximum of $\tilde{v}_{\varepsilon}^{i}.$
We have  $|\tilde{v}_{\varepsilon}^{i}(\varrho_{\varepsilon}^{i})|_{\infty}\geq\rho^{i}$.
Since $\lim_{|x|\rightarrow R}\bar{v}_{\varepsilon}^{j}=0$ uniformly in
$\varepsilon$,  there exists $\mathcal{R}_{0}^{i}>0$ independent of $\varepsilon$ such that
$|\varrho_{\varepsilon}^{i}|\leq\mathcal{R}_{0}^{i}$. Recalling $\tilde{v}_{\varepsilon}^{i}(\cdot)
=\tilde{v}_{\varepsilon}^{i}(\cdot+y_{\varepsilon}^{i})$, we get that
$y_{\varepsilon}^{i}+\varrho_{\varepsilon}^{i}$ is a maximum of $v_{\varepsilon}^{i}$.
Define $\mu_{\varepsilon}^{i}=y_{\varepsilon}^{i}+\varrho_{\varepsilon}^{i}.$
Invoking Lemma \ref{lem4.8} and $|\varrho_{\varepsilon}^{j}|\leq\mathcal{R}_{0}^{i}$,
one has $\varepsilon\mu_{\varepsilon}^{i}\rightarrow x^{i}$ as
$\varepsilon\rightarrow0^{+}$, and hence, $V(\varepsilon\mu_{\varepsilon}^{i})
\rightarrow V(x^{i})$ by the continuity of $V$.
This completes the proof of  Lemma \ref{lem4.9}.
\end{proof}

We can now give the proof of the main result of this paper.\\

\textbf{Proof of Theorem \ref{the1.1}} By Proposition \ref{pro4.1} and Lemma \ref{lem4.9},
we obtain that system \eqref{e1.1} has at least $k$ different couples of
solutions $(v_{\varepsilon}^{i},\lambda_{\varepsilon}^{i})\in H^{2}
(\mathbb{R}^{N})\times\mathbb{R}$ with $v_{\varepsilon}^{i}(x)>0$,
for every  $x\in\mathbb{R}^{N}$. Moreover, $\lambda_{\varepsilon}^{i}<0$,
where $i\in\{1,2,\cdots,k\}$. Let $u_{\varepsilon}^{i}(\cdot)=v_{\varepsilon}^{i}(\cdot\setminus\varepsilon)$
and $z_{\varepsilon}^{i}=\varepsilon\mu_{\varepsilon}^{i}$ for $i\in\{1,2,\cdots,k\}$;
then, $(u_{\varepsilon}^{i},\lambda_{\varepsilon}^{i})$ is the desired
solution for $i\in\{1,2,\cdots,k\}$.

This completes the proof of Theorem \ref{the1.1}.\qed
\section{Epilogue}\label{S6}

On concluding the paper, we summarize the main features of the main result.\\
\begin{itemize}
\item[($a$)]  Compared with the results of  Alves and Thin \cite{AT}, who assumed that the nonlinearity $f(u)$
satisfies $L^2$-subcritical growth, 
the present paper addresses
the nonlinearity $f(u)=\mu|u|^{q-2}u+|u|^{2^\ast\ast-2}u$ with $L^2$-subcritical growth and Sobolev critical growth.  
Thus, to some extent, our main result is a generalization of Alves and Thin \cite{AT}.

\item[($b$)]
 Chen and  Chen \cite{cc} and  Liu and Zhang \cite{lz} obtained only  the existence and multiplicity of normalized solutions for the problems they studied,
 where in the present paper we consider
  the concentration of solutions  for system \eqref{e1.1}. 
  Thus, our results fill the gap in these papers.

\item[($c$)] There are
some interesting questions worthy of further exploration. Theorem \ref{the1.1} is also valid if we replace $\Delta^{2}$ by  $\Delta^{2m}$ for $m>1,$ via similar arguments as in this paper.
On the other hand, it is natural to ask if Theorem \ref{the1.1}  remains valid if the nonlinearity satisfies $L^2$-supercritical growth?  If this is true, the remaining case $q\in (2+\frac{8}{N}, 2^{\ast\ast})$ for the result in this paper would be supplemented.
\end{itemize}

\vskip 0.5cm

\noindent
{\bf Funding.}
The first two authors were supported  by the Young Outstanding Talents Project of Scientific Innovation and Entrepreneurship of Jilin Province (No. 20240601048RC) 
and 
by the Research Foundation of Department of Education of Jilin Province (No. JJKH20251034KJ).
The third author was supported by the Slovenian Research and Innovation Agency program P1-0292 and grants J1-4031,  J1-4001, and N1-0278.

\vskip 0.5cm

\noindent
{\bf Acknowledgements.}
The authors thank the referee for all comments and suggestions.

\end{document}